\DeclareMathAlphabet{\mathantt}{OT1}{antt}{li}{it}
\DeclareMathAlphabet{\mathpzc}{OT1}{pzc}{m}{it}
\definecolor{lightgray}{gray}{0.95}
\numberwithin{equation}{section}
\newtheorem{theorem}{Theorem}
\newtheorem{lemma}[theorem]{Lemma}
\newenvironment{proof}[1][Proof]{\begin{trivlist}
\item[\hskip \labelsep {\bfseries #1}]}{\end{trivlist}}
\def\der{{\rm d}}
\def\qed{\raise1pt\hbox{\vrule height5pt width5pt depth0pt}}
\DeclareMathOperator{\sn}{sn}             % Jacobian elliptic sinus
\DeclareMathOperator{\cn}{cn}             % Jacobian elliptic cosinus
\DeclareMathOperator{\dn}{dn}             % elliptic functions
\title{The effects of time-dependent dissipation on the basins of attraction
for the pendulum with oscillating support}
\author{James A. Wright{\small$^1$}, Michele Bartuccelli{\small$^1$}, Guido Gentile{\small$^2$} \\ \\
{\small $^1$Department of Mathematics, University of Surrey, Guildford, GU2 7XH, UK} \\
{\small $^2$Dipartimento di Matematica e Fisica, Universit\`{a} di Roma Tre, 00146 Roma, Italy}}
\date{}
\begin{document}
\maketitle
\begin{center}
\line(24,0){400}
\end{center}
\begin{abstract}
We consider a pendulum with vertically oscillating support and time-dependent
damping coefficient which varies until reaching a finite final value. Although
it is the final value which determines which attractors eventually exist,
however the sizes of the corresponding basins of attraction are found
to depend strongly on the full evolution of the dissipation.
In particular we investigate numerically how dissipation monotonically
varying in time changes the sizes of the basins of attraction.
It turns out that, in order to predict the behaviour of the system, it is essential
to understand how the sizes of the basins of attraction
for constant dissipation depend on the damping coefficient.
For values of the parameters where the systems can be considered
as a perturbation of the simple pendulum, which is integrable, we characterise
analytically the conditions under which the attractors exist and study numerically
how the sizes of their basins of attraction depend on the damping coefficient.
Away from the perturbation regime, a numerical study of the attractors
and the corresponding basins of attraction
for different constant values of the damping coefficient produces a
much more involved scenario: changing the magnitude
of the dissipation causes some attractors to disappear either leaving
no trace or producing new attractors by bifurcation,
such as period doubling and saddle-node bifurcation.
Finally we pass to the case of an initially non-constant damping coefficient,
both increasing and decreasing to some finite final value,
and we numerically observe the resulting effects on the sizes
of the basins of attraction: when the damping coefficient varies slowly from
a finite initial value to a different final value, without changing the set of attractors,
the slower the variation the closer the sizes of the basins of attraction
are to those they have for constant damping coefficient fixed at the initial value.
Furthermore, if during the variation of the damping coefficient attractors
appear or disappear, remarkable additional phenomena may occur.
For instance it can happen that, in the limit of very large variation time,
a fixed point asymptotically attracts the entire phase space,
up to a zero measure set, even though no attractor with such a property
exists for any value of the damping coefficient between the extreme values.
\newline \newline{\small \it Keywords:
action-angle variables,
attractors,
basins of attraction,
dissipative systems,
non-constant dissipation,
periodic motions,
simple pendulum.}
\newline \newline{\small \it Mathematical Subject Classification (2000)
34C60, 34C25, 37C60, 58F12, 70K40, 70K50.}
\end{abstract}
\begin{center}
\line(24,0){400}
\end{center}

%%%%%%%%%%%%%%%%%%%%%%%%%%%%%%%%%%%%%%%%%%%%%%%%%%%%%%%%%%%%%%%%
\section{Introduction}
%%%%%%%%%%%%%%%%%%%%%%%%%%%%%%%%%%%%%%%%%%%%%%%%%%%%%%%%%%%%%%%%

Consider the ordinary differential equations
\begin{equation}
\label{GenEq}
\ddot x + G(x,t) + \gamma \dot x = 0 ,  \qquad
\ddot{\theta} + F(\theta,t) + \gamma\dot{\theta} = 0,
\end{equation}
where $(x,\dot x)\in\mathbb{R}^{2}$ and $(\theta,\dot\theta)\in\mathbb{T}\times\mathbb{R}$,
with $\mathbb{T}=\mathbb{R}/2\pi\mathbb{Z}$. The functions $F$ and $G$ are smooth and $2\pi$-periodic
in time $t$ ($F$ is also $2\pi$-periodic in $\theta$); the dots denote derivatives with respect to time.
Equations to describe the motion of one-dimensional physical systems are often of this form,
in which case the functions $G(x,t)$ and $F(\theta,t)$ can be considered as an external driving force
and the parameter $\gamma$ represents the damping coefficient, which we shall assume positive.

First, for convenience, let us summarise some of the already known ideas regarding systems
of the form \eqref{GenEq}, which can be found in the literature \cite{POaSL,CP,DPP}.
When $\gamma$ is fixed at zero, the system is Hamiltonian and no attractors are present.
For $\gamma > 0$ numerical experiments show that a finite set of attractors exist:
this is consistent with Palis' conjecture \cite{PalisConject,FeudelGrebogiHuntYorke,Rodrigues}.
The number of attractors present and the percentage of phase space covered
by their basins of attraction depend upon the chosen values of the parameters
(perturbation parameter $\varepsilon$ and damping coefficient $\gamma$),
but, for all values of the parameters, the union of the corresponding
basins of attraction completely fill the phase space, up to a set of zero measure.
Moreover if the system is a perturbation of an integrable system (perturbation regime),
all attractors found numerically turn out to be either fixed points or periodic solutions
with periods that are rational multiples  of the forcing period (subharmonic solutions);
we cannot exclude the presence of chaotic attractors \cite{GhilWolansky,FeudelGrebogi},
but apparently they either do not arise or seem to be irrelevant.

Generally in the literature the damping coefficient is taken as constant, but
in many physical systems it changes non-periodically over time. This can be due
to several factors, such as the heating or cooling of a mechanical system and the
wear out or rust on mechanical parts. Despite this, usually models and numerical simulations
of such systems only take the final value of dissipation into account
when calculating basins of attraction. The recent paper \cite{CP} puts forward the idea that,
although the final value of dissipation determines which attractors exist, the relative sizes
of their basins of attraction depend on the evolution of the dissipation.
In particular the effect of dissipation increasing to some constant value
over a given time span induces a significant change to the sizes of
the basins of attraction in comparison to those when dissipation is constant.

Let us illustrate in more detail the phenomenology.
Suppose that for two values $\gamma_{0}$ and $\gamma_{1}$
of the damping coefficient, with $\gamma_0 \neq \gamma_1$,
the same set of attractors exists. Provided the difference
between the two values is sufficiently large, the relative sizes of the basins
of attraction under the two coefficients will in general be appreciably different.
If we allow the damping coefficient $\gamma$ to depend on time, $\gamma=\gamma(t)$,
and vary from $\gamma_0$ to $\gamma_1$ over an initial period of time $T_0$, after which
it remains constant at the value $\gamma_{1}$, then the sizes of the basins of attraction
will be different from those where the system has constant coefficient $\gamma_{1}$ throughout.
Moreover if $T_0$ is taken larger, the sizes of the basins of attraction tend
towards those for the system under constant $\gamma = \gamma_0$: this reflects the fact
that the damping coefficient remains close to $\gamma_0$  for longer periods of time.

Now consider two values $\gamma_0$ and $\gamma_1$ of the damping coefficient
for which the corresponding sets of attractors $\mathcal{A}_0$ and
$\mathcal{A}_{1}$ are not the same.
As a system evolving under dissipation is expected to have only
finitely many attractors, there can only be a finite number of attractors which exist
for one of the two values and not for the other one.
What happens is that, by varying $\gamma(t)$ from $\gamma_0$ to $\gamma_1$,
an attractor can either appear or disappear, and in the latter case it can disappear
either without leaving any trace or being replaced by a new attractor by bifurcation.
Suppose, for instance, that the only difference between $\mathcal{A}_{0}$
and $\mathcal{A}_{1}$ is that the attractor $a_0\in\mathcal{A}_{0}$ simply disappears,
that is $\mathcal{A}_0\setminus\mathcal{A}_1=\{a_0\}$; then, if the time $T_0$
over which $\gamma(t)$ varies is large, each remaining attractor tends to have
a basin of attraction not smaller than that it has for $\gamma$ fixed at $\gamma_0$:
the reason being, again, that the damping coefficient remains close to $\gamma_0$
for a long time and, moreover, the trajectories which would be attracted by $a_0$
at $\gamma=\gamma_0$ will move towards some other attractor when $a_0$ disappears.
If, instead, the only difference between the sets of attractors $\mathcal{A}_0$
and $\mathcal{A}_1$ is that the attractor $a_{0}\in\mathcal{A}_{0}$ is replaced
by an attractor $a_{1}$, say by period doubling bifurcation,
then, letting $\gamma(t)$ vary from $\gamma_0$
to $\gamma_1$ over a sufficiently large time $T_0$ causes the
size of the basin of attraction of $a_{1}$ to tend towards that of
the basin of attraction that $a_{0}$ has for $\gamma=\gamma_0$.

We summarise our results by the following statements.
\begin{enumerate}
\item If $\mathcal{A}_0$, the set of attractors at $\gamma = \gamma_0$,
is a subset of $\mathcal{A}_1$, the set of attractors which exist at $\gamma=\gamma_1$,
that is $\mathcal{A}_0 \subseteq \mathcal{A}_1$, then, as the time $T_0$
over which $\gamma(t)$ is varied from $\gamma_0$ to $\gamma_1$
is taken larger, the basins of attraction
tend towards those when $\gamma$ is kept constant at $\gamma = \gamma_0$.
In particular, if an attractor belongs to $\mathcal{A}_1\setminus\mathcal{A}_0$,
then the larger $T_{0}$ the more negligible is the corresponding basin of attraction.
\item If the set of attractors  at fixed $\gamma = \gamma_1$ is a proper subset of those which
exist at $\gamma = \gamma_0$, that is $\mathcal{A}_{1} \subset \mathcal{A}_0$,
then, as $T_0$ is taken larger, the basins of attraction for the attractors
which exist at both $\gamma_0$ and $\gamma_1$ change so that for
$\gamma(t)$ varying from $\gamma_0$ to $\gamma_{1}$
they tend to become greater than or equal to those for constant $\gamma = \gamma_0$.
\item If an attractor $a_0$ exists for $\gamma = \gamma_0$ but is destroyed as $\gamma(t)$
tends towards $\gamma_1$, and a new attractor $a_1$ is created from it by bifurcation
(we will explicitly investigate the case of saddle-node or period doubling bifurcations),
then the size of the basin of attraction of $a_{1}$, as $T_0$ is taken larger,
tends towards that of $a_0$ at constant $\gamma = \gamma_0$.
\item If $\mathcal{A}_{01}$ is the set of attractors which exist at both
$\gamma=\gamma_0$ and $\gamma=\gamma_1$, that is $\mathcal{A}_{01} = \mathcal{A}_0
\cap \mathcal{A}_1$, and none of the elements in $\mathcal{A}_{0} \setminus
\mathcal{A}_{01}$ are linked by bifurcation to elements in $\mathcal{A}_{1} \setminus \mathcal{A}_{01}$,
then, as $T_0$  is taken larger, the phase space covered by the basins of attraction
of the attractors which belong to $\mathcal{A}_{01}$ tends towards 100\%.
Moreover, all such attractors have a basin of attraction larger than or equal to that
they have when the coefficient of dissipation is fixed at $\gamma=\gamma_0$.
\end{enumerate}

The main model used in \cite{CP} to convey some of the ideas above is a
version of the forced cubic oscillator, which is of the form of the first equation
in \eqref{GenEq}, with $G(x, t) = (1 + \varepsilon \cos{t})x^3$.
This system, considered in the perturbation regime
(both $\varepsilon$ and $\gamma$ small), apart from the
fixed point and as far as the numerics fortells, exhibits only oscillatory attractors
with different periods depending on the parameter values.
Also discussed in \cite{CP} is the relevance to the spin-orbit problem,
describing an asymmetric ellipsoidal satellite moving in a Keplerian elliptic orbit
around a planet \cite{spinorbit}:
the corresponding equations of motion
are of the form of the second equation in \eqref{GenEq}, with the tidal friction term
$\gamma(t)\,(\dot\theta-1)$ instead of $\gamma\dot\theta$,
with $\gamma(t)$ slowly increasing in time because of the the cooling of the satellite.

In the present paper we wish to extend the discussion to the pendulum with
periodically oscillating support \cite{LL,EngAAV}. The latter is a system which
has been already extensively studied in the literature (we refer to \cite{DPP}
for a list of references): it offers a wide variety of dynamics and, because of the separatrix
of the unperturbed system, in the perturbation regime, unlike the cubic oscillator,
also includes rotatory attractors in addition to the oscillatory attractors.
An important difference with respect to the results in \cite{POaSL} is the following.
In \cite{POaSL}, if an attractor exists for some value
of $\gamma$, it is found to exist for smaller values of $\gamma$ too.
This is not always true for the pendulum considered in the present paper,
where we will see that, at least for some values of the parameters,
both increasing and decreasing $\gamma$ can destroy attractors as well as create new ones.
However, this occurs away from the perturbation regime, where the system
can no longer be considered as a perturbation of an integrable one:
the appearance and disappearance of attractors would occur also
in the case of the cubic oscillator for larger values of the forcing.
In  addition to the case of increasing dissipation studied in \cite{CP},
here we also include the case where the damping coefficient decreases to a constant value,
which is appropriate for physical systems where joints are initially tight and
require time to loosen. In this case similar phenomena are expected. For instance,
as the value of variation time $T_0$ is taken larger, the amount of phase space
covered by each of the basins of attraction should tend towards that corresponding
to original value $\gamma_0$ of $\gamma$, providing the set of attractors remains the same.

\vspace{.3truecm}

The non-linear pendulum with vertically oscillating support is described by
\begin{equation}
\label{PenEqn}
\ddot{\theta} + f(t)\sin{\theta} + \gamma\dot{\theta} = 0, \qquad
f(t) = \left(\frac{g}{\ell} - \frac{b\omega^2}{\ell} f_{0}{(\omega t)}\right),
\end{equation}
where $f_{0}$ is a smooth $2\pi$-periodic function and
the parameters $\ell$, $b$, $\omega$ and $g$ represent the length,
amplitude and frequency of the oscillations of the support and the gravitational acceleration,
respectively, all of which remain constant; for the sake of simplicity
we shall take $f_{0}(\omega t)=\cos (\omega t)$ in \eqref{PenEqn},
as in \cite{DPP,InvPenStab,BGG3}. As mentioned above,
the parameter $\gamma$ represents the damping coefficient, which, for
analysis where it remains constant, we shall model as
$\gamma = C_{n} \varepsilon^{n}$, where $\varepsilon$ is small and $n$ is an integer.
We shall consider \eqref{PenEqn} as a pair of coupled first order non-autonomous
differential equations by letting $x = \theta$ and $y = \dot{x}$,
such that the phase space is $\mathbb{T}\times\mathbb{R}$
and the system can be written as
\begin{equation} \nonumber
\dot{x} = y , \qquad
\dot{y} = \displaystyle{ -\left(\frac{g}{\ell} - \frac{b\omega^2}{\ell}
\cos{(\omega t)}\right)\sin{x} - \gamma y .}
\end{equation}
The system described by \eqref{PenEqn} can be non-dimensionalised by taking
\[\alpha = \frac{g}{\ell\omega^2}, \qquad \beta = \frac{b}{\ell}, \qquad \tau = \omega t,\]
so that it becomes
\begin{equation}
\label{PenEqnTau}
\theta''  + f(\tau)\sin{\theta} + \gamma\theta' = 0, \qquad f(\tau) = (\alpha - \beta \cos{\tau}),
\end{equation}
or, written as a system of first order differential equations,
\begin{equation} \label{PenEqnTau-xy}
x' = y , \qquad
y' = - f(\tau) \, \sin x - \gamma y ,
\end{equation}
where the dashes represent differentiation with respect to the new time $\tau$
and $\gamma$ has been normalised so as not to contain the frequency.
Linearisation of the system about either fixed point results in a system
of the form of Mathieu's equation, see for instance \cite{HillsEqnBook}.
When the downwards fixed point is linearly stable, it is possible,
for certain parameter values, to prove analytically the conditions for
which the fixed point attracts a full measure set of initial conditions; see Appendix \ref{LinearisedApp}.

In the Sections that follow we shall use the non-dimensionalised version of the system
\eqref{PenEqnTau}, preferable for numerical implementation as it reduces the
number of parameters in the system.
In Section \ref{ThresholdsSect} we detail the calculations of the
threshold values for the attractors, that is the values of constant $\gamma$ below which
periodic attractors exist in the perturbation regime (small $\beta$).
As we shall see, because of the presence of the separatrix for the unperturbed
pendulum, this will be of limited avail for practical purposes:
the persisting periodic solutions found to first order
are in general too close to the separatrix for the perturbation theory to converge.
In Section \ref{PenNumericSect} we present numerical results, in the case of
both constant and non-constant (either increasing or decreasing) dissipation,
for values of the parameters in the perturbation regime. Since for such values
the downwards position turns out to be stable, we shall refer to this case
as the downwards pendulum. Next, in Section \ref{InvPenNumericSect}
we perform the numerical analysis for values of the parameters for which the
upwards position is stable (hence such a case will be referred to as the inverted pendulum).
Of course such parameter values are far away from the perturbation regime:
as a consequence additional phenomena occur, including period doubling and
saddle-node bifurcations.
In Section \ref{NumericsSection} we include a discussion of numerical methods used.
Finally in Section \ref{Conclusions} we draw our conclusions and briefly discuss
some open problems and possible directions for future investigation.

%%%%%%%%%%%%%%%%%%%%%%%%%%%%%%%%%%%%%%%%%%%%%%%%%%%%%%%%%%%%%%%%
\section{Thresholds values for the attractors}
\label{ThresholdsSect}
%%%%%%%%%%%%%%%%%%%%%%%%%%%%%%%%%%%%%%%%%%%%%%%%%%%%%%%%%%%%%%%%

The method used below to calculate the threshold values of $\gamma$ below which
given attractors exist follows that described in \cite{POaSL,CP}, where it was applied to
the damped quartic oscillator and the spin-orbit model.
We consider the system \eqref{PenEqnTau-xy},
with $\beta=\varepsilon$ and $\gamma=C_{1}\varepsilon$, where $\varepsilon,C_{1}>0$.
This approach is well suited to compute the leading order of the threshold values.
In general, it would be preferable to write $\gamma$ as a function of $\varepsilon$ of the form
$\gamma=C_{1}\varepsilon + C_{2}\varepsilon^2+\ldots$ (bifurcation curve), and fix the constants
$C_{k}$ by imposing  formal solubility of the equations to any perturbation order, see \cite{17FromCubic};
however this only produces higher order corrections to the leading order value.

For $\varepsilon=0$ the system reduces to the simple pendulum
$\theta''+\alpha \sin \theta = 0 $, which admits periodic solutions inside the separatrix
(librations or oscillations) and outside the separatrix (rotations).
In terms of the variables $(x,y)$ the equations \eqref{PenEqnTau-xy}
become $x'=y$, $y'=-\sin x$: the librations are described by
\begin{equation}\begin{cases}
x_{\rm osc}(\tau) =
2\arcsin{\left[k_1 \sn{ \left( \sqrt{\alpha}(\tau-\tau_0),k_1\right)}\right]}, \\
y_{\rm osc}(\tau) =
2k_1\sqrt{\alpha} \cn{ \left( \sqrt{\alpha}(\tau-\tau_0),k_1\right)},
\end{cases}
\qquad \qquad k_1 < 1 ,
\label{AAvariablesOsc}
\end{equation}
while the rotations are described by
\begin{equation}\begin{cases}
x_{\rm rot}(\tau) =
2 \arcsin{\left[ \sn{\left( \sqrt{\alpha}(\tau-\tau_0)/k_2 ,k_2\right)}\right]}, \\
y_{\rm rot}(\tau) =
2 k_2^{-1}\sqrt{\alpha} \dn{\left( \sqrt{\alpha}(\tau-\tau_0) / k_2. k_2\right)},
\end{cases}
\qquad \qquad k_2 < 1 ,
\label{AAvariablesRot}
\end{equation}
where $\cn{(\cdot, k)}$, $\sn(\cdot, k)$ and $\dn(\cdot, k)$ are
the Jacobi elliptic functions with elliptic modulus $k$ \cite{EllipOne,EllipTwo,DFLaw,WhittWat},
and $k_{1}$ and $k_{2}$ are such that $k_1^2 = (E+\alpha)/2\alpha$ and $k_2^2 = 1/k_1^2$,
with $E$ being the energy of the pendulum.
From \eqref{AAvariablesOsc} and \eqref{AAvariablesRot}
it can be seen that the solutions are functions of $(\tau - \tau_0)$,
so that the phase of a solution depends on the initial conditions.
We can fix the phase of the solution to zero without loss of generality
by instead writing $f(\tau)$ in equation \eqref{PenEqnTau-xy} as $f(\tau - \tau_0)$.
This moves the freedom of choice in the initial condition to the phase of the forcing.

The dynamics of the simple pendulum can be conveniently written in terms of
action-angle variables $(I,\varphi)$, for which we obtain two sets of variables:
for the librations inside the separatrix one expresses the action as
\begin{equation} \label{I-k-librations}
I = \frac{8}{\pi}\sqrt{\alpha}\Bigl[(k_1^2 - 1){\bf K}(k_1) + {\bf E}(k_1)\Bigr] ,
\end{equation}
where ${\bf K}(k)$ and ${\bf E}(k)$ are the complete elliptic integrals
of the first and second kind, respectively, and writes
\begin{equation} \label{action-angle-librations}
x = 2\arcsin{\left[k_1 \sn{\left(\frac{2{\bf K}(k_1)}{\pi}\varphi,k_1\right)}\right]}, \qquad
y = 2 k_1 \sqrt{\alpha} \cn{\left(\frac{2{\bf K}(k_1)}{\pi}\varphi,k_1\right)} ,
\end{equation}
with $k_{1}$ obtained by inverting \eqref{I-k-librations}, while for the rotations
outside the separatrix one expresses the actions as
\begin{equation} \label{I-k-rotations}
I = \frac{4}{k_2\pi}\sqrt{\alpha} \, {\bf E}(k_2),
\end{equation}
and writes
\begin{equation} \label{action-angle-rotations}
x = 2 \arcsin{\left[ \sn{\left(\frac{{\bf K}(k_2)}{\pi}\varphi,k_2\right)}\right]}, \qquad
y = \frac{2}{k_2} \sqrt{\alpha} \, \dn{\left(\frac{{\bf K}(k_2)}{\pi}\varphi,k_2\right)},
\end{equation}
with $k_{2}$ obtained by inverting \eqref{I-k-rotations};
further details can be found in Appendix \ref{ActionAngleApp}.

For $\varepsilon$ small, in order to compute the thresholds values,
we first write the equations  of motion for the perturbed system
in terms of the action-angle coordinates $(I,\varphi)$ of the simple pendulum,
then we look for solutions in the form of power series expansions in $\varepsilon$,
\begin{equation} \label{perturbedsolution}
I(\tau) = \sum_{n = 0}^{\infty} \varepsilon^n I^{(n)}(\tau) , \qquad
\varphi(\tau) = \sum_{n = 0}^{\infty} \varepsilon^n \varphi^{(n)}(\tau),
\end{equation}
where $I^{(0)}(\tau)$ and $\varphi^{(0)}(\tau)$ are the solutions
to the unperturbed system, that is, see Appendix \ref{ActionAngleApp},
$(I^{(0)}(\tau),\varphi^{(0)}(\tau))=(I_{\rm osc},\varphi_{\rm osc}(\tau))$
and $(I^{(0)}(\tau),\varphi^{(0)}(\tau))=(I_{\rm rot},\varphi_{\rm rot}(\tau))$,
in the case of oscillations and rotations, respectively, with
\begin{equation} \label{I-varphi}
\begin{split}
& I_{\rm osc} = \frac{8}{\pi}\sqrt{\alpha}\Bigl[(k_1^2 - 1){\bf K}(k_1)
+ {\bf E}(k_1)\Bigr], \qquad \varphi_{\rm osc}(\tau) =
\frac{\pi}{2{\bf K}(k_{1})} \sqrt{\alpha} \, \tau , \\
&
I_{\rm rot} = \frac{4}{k_2\pi}\sqrt{\alpha} \, {\bf E}(k_2) ,\qquad
\varphi_{\rm rot}(\tau)= \frac{\pi}{{\bf K}(k_{2})}
\sqrt{\alpha} \, \frac{\tau}{k_{2}} ,
\end{split}
\end{equation}
and with given $k_{1}=k_{1}^{(0)}$ and $k_{2}=k_{2}^{(0)}$.

As the solution \eqref{perturbedsolution} is found using perturbation theory, its validity is restricted
to the system where $\varepsilon$ is comparatively small. In particular this limitation has the result
that the calculations of the threshold values are not valid for the inverted pendulum,
where large $\varepsilon$ is required to stabilise the system. On the other hand the
regime of small $\varepsilon$ has the advantage that we can characterise analytically
the attractors and hence allows a better understanding of the dynamics with respect
to the case of large $\varepsilon$, where only numerical results are available.

%%%%%%%%%%%%%%%%%%%%%%%%%%%%%%%%%%%%%%%%%%%%%%%%%%%%%%%%%%%%%%%%
\subsection{Librations}
\label{LibThresh}
%%%%%%%%%%%%%%%%%%%%%%%%%%%%%%%%%%%%%%%%%%%%%%%%%%%%%%%%%%%%%%%%

We first write the equations of motion \eqref{PenEqnTau-xy} in action-angle variables,
see Appendix \ref{AppPerturbedSys}, as
\begin{equation}\label{PerturbedAAvariablesOscTau}
\begin{split}
\varphi' & = \displaystyle{ \frac{\pi \sqrt{\alpha}}{2{\bf K}(k_1)} -
\frac{\varepsilon \pi}{2{\bf K}(k_1)\,
\sqrt{\alpha}} \left[ \sn^2(\cdot) + \frac{k_1^2\sn^2(\cdot)\cn^2(\cdot)}{1-k_1^2}  -
\frac{{\bf Z}(\cdot) \sn(\cdot)\cn(\cdot)\dn(\cdot)}{1 - k_1^2}\right]\cos(\tau - \tau_0) } \\
& \qquad \qquad \quad \displaystyle{ + \frac{C_{1} \varepsilon \,  \pi \cn(\cdot)}{2{\bf K}(k_1)}
\left[ \frac{\sn(\cdot)}{\dn(\cdot)} + \frac{k_1^2\sn(\cdot)\cn^2(\cdot)}{(1-k_1^2)\dn(\cdot)} -
\frac{ {\bf Z}(\cdot)\cn(\cdot)}{1 - k_1^2}\right], } \\
I' & = \displaystyle{ \frac{8 \varepsilon k_1^2 {\bf K}(k_1)}{\pi}
\cos(\tau - \tau_0)\sn(\cdot)\cn(\cdot)\dn(\cdot) -
\frac{8 C_{1} \varepsilon \, k_1^2 \sqrt{\alpha}\,  {\bf K}(k_1)}{\pi}\cn^2(\cdot) ,}
\end{split}
\end{equation}
where ${\bf Z}(\cdot)$ is the Jacobi zeta function, see \cite{DFLaw}.
Here and throughout Section \ref{LibThresh} to save clutter we define $(\cdot) =
\left(\frac{2{\bf K}(k_1)}{\pi}\varphi ,k_1\right)$. Note that in \eqref{PerturbedAAvariablesOscTau},
the dependence on $I$ of the vector field is through the variable $k_{1}$,
according to \eqref{I-k-librations}.

The coordinates for the unperturbed system ($\varepsilon=0$) satisfy
\begin{equation}\label{WronLinSys}
\varphi' = \frac{\der E}{\der I} := \Omega(I) =
\frac{\pi \sqrt{\alpha}}{2{\bf K}(k_1)} , \qquad I'= 0 .
\end{equation}
Linearising around $(\varphi^{(0)}(\tau), I^{(0)}(\tau)) = (\Omega(I^{(0)})\,\tau, I^{(0)})$, we have
\begin{equation} \label{linearised}
\delta \varphi'  = \frac{\partial \Omega}{\partial I}(I^{(0)}) \, \delta I , \qquad \delta I' = 0,
\end{equation}
where, see Appendix \ref{ActionAngleApp},
\begin{equation} \label{OmegaDer}
\zeta(I) := \frac{\partial \Omega}{\partial I}(I) = - \frac{\pi^2}{16 k_1^2 {\bf K}^3(k_1)}
\left[\frac{{\bf E}(k_1)}{1 - k_1^2} - {\bf K}(k_1)\right].
\end{equation}
Since $I = I(k_1)$, that is the action is a function of $k_{1}$,
setting $I = I^{(0)}$ fixes $k_1 = k_1^{(0)}$, yielding
$\zeta(I^{(0)})=\zeta^{(0)}$, with $\zeta^{(0)}$ given by \eqref{OmegaDer}
with $k_{1}=k_{1}^{(0)}$.

The linearised system \eqref{linearised} can by written in compact form as
\begin{equation} \label{linearisedvector}
\begin{pmatrix} \delta\varphi ' \\ \delta I' \end{pmatrix} =
\begin{pmatrix} 0 & \zeta^{(0)} \\ 0 & 0 \end{pmatrix}
\begin{pmatrix} \delta\varphi \\ \delta I \end{pmatrix} .
\end{equation}
The Wronskian matrix $W(\tau)$ is defined as the solution of the
unperturbed linear system
\[W'(\tau) = \begin{pmatrix} 0 & \zeta^{(0)} \\ 0 & 0 \end{pmatrix}
W(\tau), \qquad W(0) = \mathbb{I},\]
where $\mathbb{I}$ is the $2 \times 2$ identity matrix. Hence
\begin{equation} \label{wronskian}
W(\tau) = \begin{pmatrix} 1 & \zeta^{(0)} \tau \\ 0 & 1 \end{pmatrix} ,
\end{equation}
with $(1,0)$ and $(\zeta^{(0)}\tau,1)$ two linearly independent solutions to \eqref{linearisedvector}.

We now look for periodic solutions $(\varphi(\tau), I(\tau) )$ to
\eqref{PerturbedAAvariablesOscTau} with period
$T= 2\pi \mathpzc{q}=4{\bf K}(k_{1})\mathpzc{p}/\sqrt{\alpha}$,
with $\mathpzc{p/q} \in \mathbb{Q}$, of the form \eqref{perturbedsolution};
see also \cite{17FromCubic,18FromCubic} for a more general discussion.
A solution of this kind will be referred to as a $\mathpzc{p}\,$:$\,\mathpzc{q}$ resonance.

The functions $(\varphi^{(n)}(\tau),I^{(n)}(\tau))$ are formally obtained
by introducing the expansions \eqref{perturbedsolution} into
the equations \eqref{PerturbedAAvariablesOscTau} and equating the coefficients of  order $n$.
This leads to the equations
\begin{equation} \label{zdiffEqn}
\begin{pmatrix} (\varphi^{(n)})' \\ (I^{(n)})' \end{pmatrix} =
\begin{pmatrix} \zeta^{(0)} \, I^{(n)} \\ 0\end{pmatrix} +
\begin{pmatrix} F_1^{(n)}(\tau) \\ F_2^{(n)}(\tau) \end{pmatrix}
\end{equation}
with $F_1^{(n)}(\tau)$ and $F_2^{(n)}(\tau)$ given by
\begin{equation} \nonumber
\begin{split}
F_1^{(n)}(\tau) & = \left[ \frac{\pi\sqrt{\alpha}}{2{\bf K}(k_1)} -
\zeta^{(0)} I\right]^{(n)} + \Biggl[ -\frac{\pi}{2{\bf K}(k_1)\,\sqrt{\alpha}}
\left[ \sn^2(\cdot) + \frac{k_1^2 }{1-k_1^2} \sn^2(\cdot) \cn^2(\cdot)\right. \\
& \qquad \qquad \qquad \left.  -
\frac{{\bf Z}(\cdot) }{1 - k_1^2} \sn(\cdot)\cn(\cdot)\dn(\cdot) \right] \cos(\tau - \tau_0)
\\ & \qquad \qquad \qquad
+ \frac{C_{1} \pi \cn(\cdot)}{2{\bf K}(k_1)}\left[ \frac{\sn(\cdot)}{\dn(\cdot)}
+ \frac{k_1^2\sn(\cdot)\cn^2(\cdot)}{(1-k_1^2)\dn(\cdot)} -
\frac{ {\bf Z}(\cdot)\cn(\cdot)}{1 - k_1^2} \right] \Biggr]^{(n-1)} , \\
F_2^{(n)}(\tau) & = \Biggl[ \frac{8 k_1^2 {\bf K}(k_1)}{\pi}
\cos(\tau - \tau_0) \sn(\cdot) \cn(\cdot) \dn(\cdot) -
\frac{8 C_{1} k_1^2\sqrt{\alpha} \, {\bf K}(k_1)}{\pi} \cn^2(\cdot) \Biggr]^{(n-1)}.
\end{split}
\end{equation}
The notation $[\dots]^{(n)}$ means that one has to take all terms of order $n$ in $\varepsilon$
of the function inside $[\ldots]$. 
By construction, $F_1^{(n)}(\tau)$ and $F_2^{(n)}(\tau) $ depend
only  on the coefficients $\varphi^{(p)}(\tau)$ and $I^{(p)}(\tau)$,
with $p<n$, so that \eqref{zdiffEqn} can be solved recursively.

Then, by using the Wronskian matrix \eqref{wronskian}, see \cite{DiffEqnsBook},
one can integrate \eqref{zdiffEqn} so as to obtain
\begin{equation}\label{zEqn}
\begin{pmatrix} \varphi^{(n)}(\tau) \\ I^{(n)}(\tau) \end{pmatrix} =
W(\tau) \begin{pmatrix} \bar{\varphi}^{(n)} \\ \bar{I}^{(n)} \end{pmatrix} +
W(\tau)\int_0^{\tau} \der \sigma \,  W^{-1}(\sigma) \begin{pmatrix} F_1^{(n)}(\sigma) \\
F_2^{(n)}(\sigma) \end{pmatrix}
\end{equation}
where $\bar{\varphi}^{(n)}$ and $\bar{I}^{(n)}$ are the $n^{\rm th}$ order in
the $\varepsilon$-expansion of the initial conditions for $\varphi$ and $I$, respectively.
In the last term of \eqref{zEqn} we have
\begin{equation} \nonumber
W(\tau) \int_0^{\tau}  \der \sigma \,
W^{-1}(\sigma) \begin{pmatrix} F_1^{(n)}(\sigma) \\ F_2^{(n)}(\sigma) \end{pmatrix}
= \int_0^\tau \der \sigma \,
W(\tau - \sigma) \begin{pmatrix} F_1^{(n)}(\sigma) \\ F_2^{(n)}(\sigma) \end{pmatrix} .
\end{equation}
This yields
\begin{equation}\label{phinEqnLib}
\begin{split}
\varphi^{(n)}(\tau) & =
\bar{\varphi}^{(n)} + \zeta^{(0)}\tau \bar{I}^{(n)}  + \int_0^{\tau} \der \sigma \, F_1^{(n)}(\sigma)
+ \zeta^{(0)} \int_0^{\tau} \der\sigma \int_0^{\sigma} \der\sigma' \, F_2^{(n)}(\sigma') , \\
I^{(n)}(\tau) & = \bar{I}^{(n)}(\tau) + \int_0^{\tau} \der\sigma \, F_2^{(n)}(\sigma) .
\end{split}
\end{equation}

For a periodic function $g$, let us denote the average of $g$ with $\langle g \rangle$ and the
zero-average function $g - \langle g \rangle$ with $\breve{g}$. Suppose that
\begin{equation}\label{F2zeroAv}
\langle F_2^{(n)} \rangle := \frac{1}{T}\int_0^T \der\tau \, F_2^{(n)}(\tau) = 0,
\end{equation}
where $T=4{\bf K}(k_{1})\mathpzc{p}$; we will check later on the validity of \eqref{F2zeroAv}.
Then we may write
\begin{equation} \nonumber \begin{split}
\mathscr{F}_1^{(n)}(\tau) & = \int_0^\tau \der \sigma \, F_1^{(n)}(\sigma)
= \langle F_1^{(n)} \rangle \tau + \int_0^\tau \der \sigma \, \breve{F}_1^{(n)}(\sigma) , \\
\mathscr{F}_2^{(n)}(\tau) & = \int_0^\tau F_2^{(n)}(\sigma) \thinspace \der \sigma =
\int_0^\tau \der \sigma \, \breve{F}_2^{(n)}(\sigma) ,
\end{split}
\end{equation}
and subsequently rewrite \eqref{phinEqnLib} as
\begin{equation} \nonumber \begin{split}
\varphi^{(n)}(\tau) & = \bar{\varphi}^{(n)} + \zeta^{(0)}\tau \bar{I}^{(n)} +
\langle F_1^{(n)} \rangle \, \tau +
\int_0^\tau \der \sigma \, \breve{F}_1^{(n)}(\sigma)
+ \zeta^{(0)} \langle \mathscr{F}_2^{(n)} \rangle \, \tau + \zeta^{(0)} \int_0^\tau
\der \sigma \, \mathscr{F}_2^{(n)}(\sigma) , \\
I^{(n)}(\tau) & = \bar{I}^{(n)} + \int_0^\tau \der \sigma \, \breve{F}_2^{(n)}(\sigma) ,
\end{split}
\end{equation}
in which all the terms which are not linear in $\tau$ are periodic.
If we choose our initial conditions $\bar{I}^{(n)}$ such that they satisfy
\[\bar{I}^{(n)} = -\frac{1}{\zeta^{(0)}} \langle F_1^{(n)} \rangle -
\langle \mathscr{F}_2^{(n)} \rangle , \]
the above reduces to
\begin{equation} \nonumber \begin{split}
\varphi^{(n)}(\tau) & = \bar{\varphi}^{(n)}(\tau) + \int_0^\tau \der\sigma \,
\breve{F}_1^{(n)}(\sigma)  + \zeta^{(0)} \int_0^\tau \der\sigma \,
\mathscr{F}_2^{(n)}(\sigma) , \\
I^{(n)}(\tau) & = \bar{I}^{(n)} + \int_0^\tau \der\sigma \, \breve{F}_2^{(n)}(\tilde{\tau}) ,
\end{split}
\end{equation}
so that both $\varphi^{(n)}(\tau)$ and $I^{(n)}(\tau)$ are periodic functions with period $T$,
provided \eqref{F2zeroAv} holds.

%%%%%%%%%%%%%%%%%%%%%%%%%%%%%%%%%%%%%%%%%%%%%%%%%%%%%%%%%%%%%%%%
\begin{lemma} \label{lem:1}
Consider the series \eqref{perturbedsolution}. If $\mathpzc{p/q} = 1/2m$, $m \in \mathbb{N}$
and $C_{1}$ is small enough, then it is possible to fix the initial conditions
$(\bar{\varphi}^{(n)},\bar{I}^{(n)})$ in such a way that \eqref{F2zeroAv}
holds for all $n \ge 1$. If $\mathpzc{p/q} \neq 1/2m$
for all $m\in\mathbb{N}$, then \eqref{F2zeroAv} can be satisfied only for $C_{1} = 0$.
\end{lemma}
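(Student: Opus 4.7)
The plan is to show that $\langle F_{2}^{(n)}\rangle=0$ can be enforced order by order in $\varepsilon$, using the phase $\tau_{0}=\bar{\varphi}^{(0)}$ at order one and the free constants $\bar{\varphi}^{(n-1)}$ at orders $n\ge 2$; the prescription $\bar I^{(n)} = -\langle F_1^{(n)}\rangle/\zeta^{(0)} - \langle\mathscr{F}_2^{(n)}\rangle$ will then automatically render $(\varphi^{(n)}(\tau),I^{(n)}(\tau))$ periodic at each order.

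First I would tackle the base case $n=1$. Substituting the unperturbed solution $\varphi^{(0)}(\tau)=\pi\sqrt{\alpha}\,\tau/(2{\bf K}(k_{1}))$ into $F_{2}^{(1)}$ makes the argument of the elliptic factors equal to $\sqrt{\alpha}\tau$. The identity $\sn\cn\dn=\frac{1}{2}(\sn^{2})'$ shows that $\sn\cn\dn(\sqrt{\alpha}\tau,k_{1})$ is an odd, zero-average function of period $2{\bf K}(k_{1})/\sqrt{\alpha}$ in $\tau$; using the resonance relation ${\bf K}(k_{1})=\pi\mathpzc{q}\sqrt{\alpha}/(2\mathpzc{p})$, its Fourier sine series has frequencies $2k\mathpzc{p}/\mathpzc{q}$, $k\ge 1$. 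Multiplying by $\cos(\tau-\tau_{0})$ and averaging on $[0,2\pi\mathpzc{q}]$, only the harmonics satisfying $2k\mathpzc{p}/\mathpzc{q}=\pm 1$ survive; since $\gcd(\mathpzc{p},\mathpzc{q})=1$ this forces $\mathpzc{p}=1$ and $\mathpzc{q}=2m$. A direct computation then gives
\[
\langle F_{2}^{(1)}\rangle = A(k_{1})\sin\tau_{0} - C_{1}B(k_{1}),
\]
with $A(k_{1})=4k_{1}^{2}{\bf K}(k_{1})a_{m}/\pi$, where $a_{m}$ is the $m$-th Fourier sine coefficient of $\sn\cn\dn$ (non-zero by the classical Jacobi expansion of $\sn^{2}$), and $B(k_{1})=8\sqrt{\alpha}[{\bf E}(k_{1})-(1-k_{1}^{2}){\bf K}(k_{1})]/\pi$, which is strictly positive for $0<k_{1}<1$. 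Provided $|C_{1}|$ is small enough that $|C_{1}B/A|<1$, the equation $\sin\tau_{0}=C_{1}B/A$ admits a solution. If instead $\mathpzc{p/q}\neq 1/(2m)$, the first term averages to zero for every $\tau_{0}$, so $\langle F_{2}^{(1)}\rangle=-C_{1}B(k_{1})$ can vanish only if $C_{1}=0$, since $\bar I^{(0)}$ is pinned by the resonance and $\bar\varphi^{(0)}=\tau_{0}$ has no influence on the average.

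For the inductive step under the resonance condition, I would isolate the dependence of $F_{2}^{(n)}$ on the most recently introduced parameter $\bar{\varphi}^{(n-1)}$. Since $\varphi^{(n-1)}$ contains $\bar{\varphi}^{(n-1)}$ as an additive constant, the Taylor expansion of the elliptic factors of $F_{2}$ about the unperturbed solution produces, at order $n-1$, a term $(\partial_{\varphi}F_{2})(\varphi^{(0)},I^{(0)},\tau)\,\bar{\varphi}^{(n-1)}$, the further occurrences of $\bar{\varphi}^{(n-1)}$ being of strictly higher order in $\varepsilon$. One thus obtains a decomposition
\[
\langle F_{2}^{(n)}\rangle=\Lambda_{n}(k_{1},\tau_{0},C_{1})\,\bar{\varphi}^{(n-1)}+R_{n},
\]
where $R_{n}$ depends only on $\tau_{0}$ and the previously fixed $\bar{\varphi}^{(0)},\ldots,\bar{\varphi}^{(n-2)}$. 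A Fourier argument strictly analogous to the base case, applied to $\partial_{\varphi}F_{2}$ on the unperturbed solution, yields $\Lambda_{n}|_{C_{1}=0}\propto a_{m}\cos\tau_{0}$; since the base case forces $\sin\tau_{0}=O(C_{1})$, one has $\cos\tau_{0}=\pm 1+O(C_{1}^{2})$, so $\Lambda_{n}\neq 0$ for $C_{1}$ small and $\bar{\varphi}^{(n-1)}$ can be chosen uniquely so that $\langle F_{2}^{(n)}\rangle=0$. The main obstacle will be precisely this non-degeneracy check: tracing how the additive constant $\bar{\varphi}^{(n-1)}$ propagates through the nested substitutions of $\varphi^{(j)}$ into the elliptic factors of $F_{2}$, and then identifying the surviving resonant harmonic, requires attentive bookkeeping, but no computation of essentially different nature from the one carried out at $n=1$.
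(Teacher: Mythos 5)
Your proposal is correct and follows essentially the same route as the paper: fix $\tau_{0}$ at order one via \eqref{CEqnLib}, then exploit the linearity of $\langle F_{2}^{(n)}\rangle$ in $\bar\varphi^{(n-1)}$ for $n\ge 2$, with the nondegeneracy of the coefficient $\propto\cos\tau_{0}\,G_{1}(\mathpzc{p},\mathpzc{q})$ guaranteed because $|\sin\tau_{0}|=O(C_{1})<1$. The one small improvement over the paper's argument is your observation that $\sn\cn\dn=\tfrac{1}{2}(\sn^{2})'$ has period $2{\bf K}(k_{1})$, which immediately yields the even-harmonic spectrum and hence $\mathpzc{p}=1$, $\mathpzc{q}\in 2\mathbb{N}$, whereas the paper multiplies out three separate Fourier expansions and arrives at the same conclusion via the parity of $\pm(2m_{1}-1)\pm(2m_{2}-1)\pm 2m_{3}$.
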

%%%%%%%%%%%%%%%%%%%%%%%%%%%%%%%%%%%%%%%%%%%%%%%%%%%%%%%%%%%%%%%%

%%%%%%%%%%%%%%%%%%%%%%%%%%%%%%%%%%%%%%%%%%%%%%%%%%%%%%%%%%%%%%%%
\begin{proof}
For $n = 1$ we have
\begin{equation} \nonumber
\begin{split}
F_2^{(1)}(\tau) & = \frac{8 k_1^2 {\bf K}(k_1)}{\pi} \cos(\tau - \tau_0)
\sn(\sqrt{\alpha}\tau,k_{1})\cn(\sqrt{\alpha}\tau,k_{1}) \, \dn(\sqrt{\alpha}\tau,k_{1}) \\
& - \frac{8 C_{1} k_1^2\sqrt{\alpha} \, {\bf K}(k_1)}{\pi}\cn^2(\sqrt{\alpha}\tau,k_{1}) ,
\end{split}
\end{equation}
with $k_{1}=k_{1}^{(0)}$ here and henceforth. Moreover set,
see Appendix \ref{AppUsefulIntsAndExps},
\begin{equation} \label{Delta}
\begin{split}
\Delta & := \frac{\sqrt{\alpha}}{4{\bf K}(k_1)} \int_0^{4{\bf K}(k_1)/\sqrt{\alpha}}
\der\tau \, \cn^2(\sqrt{\alpha}\tau , k_{1}) \\
&  = \frac{1}{2{\bf K}(k_1)} \int_0^{2{\bf K}(k_1)} \der\tau \,
\cn^2 (\tau , k_{1}) = \frac{1}{k_1^2}\left[\frac{1}{2{\bf K}(k_1)}{\bf E}
\left( 2{\bf K}(k_1),k_1\right) - (1 - k_1^2)\right],
\end{split}
\end{equation}
where ${\bf E}(u,k)$ is the incomplete elliptic integral of the second kind, and
$\Gamma_1 (\tau_0;\mathpzc{p},\mathpzc{q}) :=
\sin(\tau_0)\,G_1(\mathpzc{p},\mathpzc{q})$, with
\begin{equation}\label{GammaEqnLib}
\begin{split}
G_1(\mathpzc{p},\mathpzc{q}) & = \frac{1}{T}
\int_0^{T}\sn(\sqrt{\alpha}\tau,k_1)\cn(\sqrt{\alpha}\tau,k_1)\dn(\sqrt{\alpha}\tau,k_1)\sin(\tau) \\
& = \frac{1}{4 {\bf K}(k_1)\mathpzc{p}}\int_0^{4 {\bf K}(k_1)\mathpzc{p}}
\der\tau \, \sn({\tau},k_1)\cn({\tau},k_1)\dn({\tau},k_1)\sin({\tau}/\sqrt{\alpha}) .
\end{split}
\end{equation}
Under the resonance condition $\pi \alpha/2{\bf K}(k_{1})=\mathpzc{p/q}$, one has
\begin{equation} \nonumber
\sin(\tau /\sqrt{\alpha}) = \sin\left(\frac{\pi\tau}{2{\bf K}(k_1)}
\frac{\mathpzc{q}}{\mathpzc{p}}\right),
\end{equation}
where $\mathpzc{p}$ and $\mathpzc{q}$ are relatively prime.
By expanding the Jacobi elliptic functions in Fourier series,
see Appendix \ref{AppUsefulIntsAndExps},
we find that $\mathpzc{p},\mathpzc{q}$ must also satisfy the condition
\begin{equation} \nonumber
\mathpzc{p}\Bigl(\pm (2m_1 - 1)\pm (2m_2 - 1)\pm 2m_3\Bigr)\pm \mathpzc{q} = 0
\end{equation}
for $G_{1}(\mathpzc{p},\mathpzc{q})$ to be non-zero.
Thus $\mathpzc{q} = 2m\mathpzc{p}$, $m \in \mathbb{N}$, that is
$\mathpzc{p}=1$ and $\mathpzc{q}\in 2\mathbb{N}$, and
$\langle F_2^{(1)} \rangle = 0$ provided $C_{1}$ and $\tau_{0}$ satisfy
\begin{equation}\label{CEqnLib}
C_{1} = \frac{\sin(\tau_0)}{\sqrt{\alpha}\Delta}G_1(\mathpzc{p},\mathpzc{q}) .
\end{equation}
Note that the existence of a value of $\tau_0$ satisfying \eqref{CEqnLib} is possible only if
\begin{equation} \nonumber
|C_{1}| \le C_{1} (\mathpzc{p}/\mathpzc{q})
:= \frac{1}{\sqrt{\alpha}\Delta} \, G_1(\mathpzc{p},\mathpzc{q}).
\end{equation}
Some values of the constants $C_{1} (\mathpzc{p}/\mathpzc{q})$
for $\alpha=0.5$ are listed in Table \ref{ThresholdTableLib}.

%%%%%%%%%%%%%%%%%%%%%%%%%%%%%%%%%%%%%%%%%%%%%%%%%%%%%%%%%%%%%%%%
\begin{table}[H]
\centering
\begin{tabular}{|ccccc|}
\hline
$\mathpzc{q}$ & $k_1$ &
$G_1(1/\mathpzc{q})$ & $\Delta$  & $C_1(1/\mathpzc{q})$ \\ \hline
  2 & 0.885201568846 & 0.172135 & 0.407121 & 0.597944 \\
  4 & 0.998888384493 & 0.077675 & 0.224342 & 0.489649 \\
  6 & 0.999986981343 & 0.051734 & 0.150043 & 0.487616 \\
  8 & 0.999999846887 & 0.038800 & 0.112539 & 0.487578 \\
10 & 0.999999998199 & 0.031040 & 0.090032 & 0.487577 \\
12 & 0.999999999979 & 0.025867 & 0.075026 & 0.487577 \\
\hline
\end{tabular}
\caption{\small{Constants for the oscillating attractors with $\alpha = 0.5$.}}
\label{ThresholdTableLib}
\end{table}
%%%%%%%%%%%%%%%%%%%%%%%%%%%%%%%%%%%%%%%%%%%%%%%%%%%%%%%%%%%%%%%%

For all $n \ge 2$ we can write $F^{(n)}_2(\tau)$ as
\begin{equation} \nonumber
\begin{split}
F^{(n)}_2(\tau) & = \frac{8 k_1^2 {\bf K}(k_{1})}{\pi}
\left.\frac{\partial}{\partial \varphi} \left( \cos(\tau - \tau_0)
\, \sn(\cdot)\cn(\cdot)\dn(\cdot) - \sqrt{\alpha} \, C_{1} \cn^2(\cdot) \right)
\right|_{\varphi = \varphi^{(0)}}
\!\!\!\!\!\!\! \bar{\varphi}^{(n-1)}  + R^{(n)}(\tau),
\end{split}
\end{equation}
where $R^{(n)}(\tau)$ is a suitable function which does not depend on $\bar{\varphi}^{(n-1)}$.
It can be seen that $\langle F_2^{(n)} \rangle = 0$ if and only if
\begin{equation} \nonumber
\begin{split}
\langle R^{(n)} \rangle  & = - \frac{8 k_1^2 {\bf K}(k_1)}{\pi}
\left( \frac{1}{T}\int_0^T \der\tau \, \frac{2{\bf K}(k_1)}{\sqrt{\alpha}\pi}\frac{\partial}{\partial \tau}
\Bigl(\sn(\sqrt{\alpha}\tau)\cn(\sqrt{\alpha}\tau)\dn(\sqrt{\alpha}\tau)\Bigr)
\cos(\tau - \tau_0) \right. \\
& \qquad \qquad \left. - \frac{\sqrt{\alpha}\, C_{1}}{T}
\int_0^T \der\tau \, \frac{2{\bf K}(k_1)}{\sqrt{\alpha}\pi}\frac{\partial}{\partial \tau}
\Bigl(\cn^2(\sqrt{\alpha}\tau)\Bigr) \right)\bar{\varphi}^{(n-1)}.
\end{split}
\end{equation}
This can be rewritten as
\begin{equation} \nonumber
\langle R^{(n)} \rangle = - \frac{16 k_1^2 {\bf K}^2(k_1)}{\sqrt{\alpha}\pi^2}
\cos(\tau_0) \, G_1(\mathpzc{p},\mathpzc{q})\bar{\varphi}^{(n-1)}.
\end{equation}
We refer the reader to Appendix \ref{AppUsefulIntsAndExps}
for more details on the evaluation of the integrals.
The coefficient of $\bar{\varphi}^{(n-1)}$ is non-vanishing for $\tau_0$ chosen
such that \eqref{CEqnLib} is satisfied. Therefore it is possible
to fix the initial conditions $\bar{\varphi}^{(n-1)}$
in such a way that one has $\langle F^{(n)}_2 \rangle= 0$ at all orders,
thus completing the proof of the lemma. $\hfill \Box$
\end{proof}

Lemma \ref{lem:1} implies that the threshold values of the $\mathpzc{p}\,$:$\,\mathpzc{q}$
resonances are $\gamma(\mathpzc{p}/\mathpzc{q})=C_{1}(\mathpzc{p}/\mathpzc{q})
\varepsilon$ for $\mathpzc{p}=1$ and $\mathpzc{q}$ even,
with the constants $C_{1}(\mathpzc{p}/\mathpzc{q})$ in Table \ref{ThresholdTableLib},
while the threshold values of the other resonances are at least $O(\varepsilon^2)$.

%%%%%%%%%%%%%%%%%%%%%%%%%%%%%%%%%%%%%%%%%%%%%%%%%%%%%%%%%%%%%%%%
\subsection{Rotations}
%%%%%%%%%%%%%%%%%%%%%%%%%%%%%%%%%%%%%%%%%%%%%%%%%%%%%%%%%%%%%%%%

Similarly for the rotating scenario, again further details can be found in Appendix \ref{AppPerturbedSys},
the perturbed system can be written as
\begin{equation}\label{PerturbedAAvariablesRotTau}
\begin{split}
\varphi' & = \frac{\pi\sqrt{\alpha}}{k_2{\bf K}(k_2)} +
\frac{\varepsilon \pi k_2}{\sqrt{\alpha}\,{\bf K}(k_2)}
\left[\frac{k^2_2 \sn^2(\cdot)\cn^2(\cdot)}{1 - k_2^2} -
\frac{{\bf Z}(\cdot)\sn(\cdot)\cn(\cdot)\dn(\cdot)}{1 - k_2^2} \right]\cos(\tau - \tau_0) \\
& \qquad \qquad \quad - \frac{C_{1} \varepsilon \pi}{{\bf K}(k_2)}
\left[ \frac{k_2^2 \sn(\cdot) \cn(\cdot)\dn(\cdot)}{1 - k_2^2} -
\frac{ {\bf Z}(\cdot)\dn^2(\cdot)}{1-k_2^2} \right] , \\
I' & = \frac{4 \varepsilon {\bf K}(k_2)}{\pi}\cos(\tau - \tau_0) \sn(\cdot)\cn(\cdot)\dn(\cdot) -
\frac{4 C_{1} \varepsilon \, \sqrt{\alpha} \, {\bf K}(k_2)}{\pi k_2}\dn^2(\cdot) .
\end{split}
\end{equation}
Here and throughout this subsection we set $(\cdot) = \left(\frac{{\bf K}(k_2)}{\pi}
\varphi,k_2\right)$. In this scenario, the Wronskian matrix $W(\tau)$ can be written as
in \eqref{wronskian}, with $\zeta^{(0)}$ given by, see Appendix \ref{ActionAngleApp},
\begin{equation} \label{OmegaDerRot}
\zeta (I)= -\frac{\pi^2\sqrt{\alpha}}{4{\bf K}^3\left(k_2\right)}
\left[\frac{{\bf E}\left(k_2\right)}{1 - k_2^2} - {\bf K}\left(k_2\right)\right] ,
\end{equation}
for $k_{2}=k_{2}^{(0)}$.
We again look for solutions $(\varphi(\tau), I(\tau) )$ with period
$T= 2\pi \mathpzc{q}=4k_2{\bf K}(k_{1})\mathpzc{p}/\sqrt{\alpha}$ corresponding to a resonance
$\mathpzc{p}\,$:$\,\mathpzc{q}$, of the form \eqref{perturbedsolution},
the functions $\varphi^{(n)}(\tau)$ and $I^{(n)}(\tau)$ being defined
as in \eqref{zEqn}, with $F_1^{(n)}(\tau)$ and $F_2^{(n)}(\tau)$ defined as
\begin{equation} \nonumber \begin{split}
F_1^{(n)}(\tau) & = \left[\frac{\pi\sqrt{\alpha}}{k_2{\bf K}(k_2)} - \zeta^{(0)} I \right]^{(n)} +
\Biggl[ \frac{\pi k_2}{\sqrt{\alpha}\,{\bf K}(k_2)}
\left[\frac{k_2^2 \sn^2(\cdot)\cn^2(\cdot)}{1 - k_2^2}  \right. \\
& \qquad \qquad \left.  -
\frac{{\bf Z}(\cdot)\sn(\cdot)\cn(\cdot)\dn(\cdot)}{1 - k_2^2} \right]\cos(\tau - \tau_0) \\
& \qquad \qquad - \frac{C_{1} \pi}{{\bf K}(k_2)} \left[
\frac{k_2^2 \sn(\cdot) \cn(\cdot)}{1 - k_2^2} -
\frac{ {\bf Z}(\cdot)\dn(\cdot)}{1-k_2^2} \right]\Biggr]^{(n-1)}, \\
F_2^{(n)}(\tau) & = \left[\frac{4 {\bf K}(k_2)}{\pi}\cos(\tau - \tau_0) \right.
\left. \sn(\cdot)\cn(\cdot)\dn(\cdot) - \frac{4C_{1} \, \sqrt{\alpha} \, {\bf K}(k_2)}{\pi k_2}\dn^2(\cdot)\right]^{(n-1)}.
\end{split}
\end{equation}
The theory goes through exactly as previously shown for the case of libration and we must show
that $\langle F_2^{(n)} \rangle = 0$.

%%%%%%%%%%%%%%%%%%%%%%%%%%%%%%%%%%%%%%%%%%%%%%%%%%%%%%%%%%%%%%%%
\begin{lemma} \label{lem:2}
Consider the series \eqref{perturbedsolution}.
If $\mathpzc{p}/\mathpzc{q} = 1/2m$, $m \in \mathbb{N}$, and $C_{1}$ is small enough,
then it is possible to fix the initial conditions $(\bar{\varphi}^{(n)},\bar{I}^{(n)})$
in such a way that $\langle F_2^{(n)} \rangle = 0$ for all $n \ge 1$.
If $\mathpzc{p}/\mathpzc{q} \neq 1/2m$ for all $m\in\mathbb{N}$
then $\langle F_2^{(n)} \rangle = 0$ only when $C_{1} = 0$.
\end{lemma}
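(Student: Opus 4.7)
The plan is to mirror the proof of Lemma \ref{lem:1} almost line-for-line, substituting rotation-case quantities for their librational analogues. Specialising $F_2^{(n)}$ at $n=1$ yields a forcing term with kernel $\sn(\cdot)\cn(\cdot)\dn(\cdot)$ multiplied by $\cos(\tau-\tau_0)$, and a dissipative term proportional to $\dn^2(\cdot)$, where now $(\cdot) = (\sqrt{\alpha}\tau/k_2,k_2)$. I would introduce the rotation analogues of $\Delta$ and $G_1$,
\[
\tilde{\Delta} := \frac{1}{T}\int_0^T \der\tau \, \dn^2(\sqrt{\alpha}\tau/k_2,k_2), \qquad
\tilde{G}_1(\mathpzc{p},\mathpzc{q}) := \frac{1}{T}\int_0^T \der\tau \, \sn(\cdot)\cn(\cdot)\dn(\cdot)\,\sin\tau ,
\]
with $T$ the resonance period; the standard identity $\langle\dn^2\rangle={\bf E}(k_2)/{\bf K}(k_2) \neq 0$ evaluates the former, while $\tilde{G}_1$ is analysed via the Fourier expansion of $\sn\,\cn\,\dn$ in the rotation parametrisation.

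Exploiting the odd parity of $\sn\,\cn\,\dn$ in its argument and the decomposition $\cos(\tau-\tau_0)=\cos\tau\cos\tau_0+\sin\tau\sin\tau_0$, only the $\sin\tau_0$ component of the forcing survives the averaging. Matching the Fourier frequencies of $\sn\,\cn\,\dn$, which are integer multiples of $\pi\sqrt{\alpha}/(k_2{\bf K}(k_2))$, against the unit frequency of $\sin\tau$ under the resonance condition $\pi\sqrt{\alpha}/(k_2{\bf K}(k_2))=\mathpzc{p}/\mathpzc{q}$ reproduces the same Diophantine equation as in Lemma \ref{lem:1}, with conclusion $\mathpzc{p}=1$ and $\mathpzc{q}\in 2\mathbb{N}$. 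The zero-mean condition at first order then reads $C_{1}=k_2\sin\tau_0\,\tilde{G}_1/(\sqrt{\alpha}\,\tilde{\Delta})$, which is solvable in $\tau_0$ iff $|C_{1}|\le k_2|\tilde{G}_1|/(\sqrt{\alpha}\,\tilde{\Delta})=:C_{1}(\mathpzc{p}/\mathpzc{q})$. Conversely, if $\mathpzc{p}/\mathpzc{q}\neq 1/2m$ then $\tilde{G}_1\equiv 0$, and only the dissipative contribution remains in $\langle F_2^{(1)}\rangle$, forcing $C_{1}=0$.

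For $n\ge 2$ the induction follows Lemma \ref{lem:1}: one writes
\[
F_2^{(n)}(\tau)=\frac{4{\bf K}(k_2)}{\pi}\left.\frac{\partial}{\partial\varphi}\!\left(\cos(\tau-\tau_0)\sn(\cdot)\cn(\cdot)\dn(\cdot)-\frac{\sqrt{\alpha}\,C_{1}}{k_2}\dn^2(\cdot)\right)\right|_{\varphi=\varphi^{(0)}}\!\!\bar{\varphi}^{(n-1)}+R^{(n)}(\tau),
\]
with $R^{(n)}$ independent of $\bar{\varphi}^{(n-1)}$, and integrates by parts over the period to obtain $\langle\,\cdot\,\rangle\propto\cos\tau_0\,\tilde{G}_1(\mathpzc{p},\mathpzc{q})\bar{\varphi}^{(n-1)}$. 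This coefficient is non-zero because $|\sin\tau_0|<1$ (as $|C_{1}|<C_{1}(\mathpzc{p}/\mathpzc{q})$) forces $\cos\tau_0\neq 0$, so $\bar{\varphi}^{(n-1)}$ can be chosen to cancel $\langle R^{(n)}\rangle$. The principal obstacle is the Fourier-arithmetic step: the argument $\sqrt{\alpha}\tau/k_2$, in contrast to $\sqrt{\alpha}\tau$ in the libration case, introduces an extra factor $1/k_2$ into all the Jacobi frequencies, and one must verify that the resonance period $T$ contains precisely the compensating factor of $k_2$ so that the Diophantine condition on $(\mathpzc{p},\mathpzc{q})$ remains identical to that of Lemma \ref{lem:1}.
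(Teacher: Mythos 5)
Your proposal is correct and mirrors the paper's own proof of Lemma~\ref{lem:2} essentially step-for-step: introducing the rotation-case $\Delta$ and $G_1$, reducing via parity to the $\sin\tau_0$ component, matching Fourier frequencies under the resonance condition to recover $\mathpzc{p}=1$, $\mathpzc{q}\in 2\mathbb{N}$, solving $C_1 = k_2\sin\tau_0\,G_1/(\sqrt{\alpha}\Delta)$ at first order, and fixing $\bar{\varphi}^{(n-1)}$ at orders $n\ge 2$ via the $\cos\tau_0$-proportional coefficient. Your explicit remark that ``$|C_1| < C_1(\mathpzc{p}/\mathpzc{q})$ forces $\cos\tau_0 \neq 0$'' makes precise the role of the ``$C_1$ small enough'' hypothesis, and your final caveat about the $1/k_2$ rescaling in the Jacobi argument cancelling against the period $T$ is exactly the verification the paper compresses into the sentence that $\Gamma_1$ ``can be calculated similarly to the case inside the separatrix.''
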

%%%%%%%%%%%%%%%%%%%%%%%%%%%%%%%%%%%%%%%%%%%%%%%%%%%%%%%%%%%%%%%%

\begin{proof}
One has
\begin{equation} \nonumber \begin{split}
F_2^{(1)}(\tau) & = \frac{4 {\bf K}(k_2)}{\pi}\cos{(\tau - \tau_0)}
\sn\left(\frac{\sqrt{\alpha}}{k_2}\tau , k_2 \right)\cn\left(\frac{\sqrt{\alpha}}{k_2}\tau , k_{2} \right)
\dn\left(\frac{\sqrt{\alpha}}{k_2}\tau , k_2 \right) \\ & \qquad \qquad -
\frac{4C_{1} \, \sqrt{\alpha} {\bf K}(k_2)}{\pi k_2}
\dn^2\left(\frac{\sqrt{\alpha}}{k_2}\tau , k_{2} \right) ,
\end{split}
\end{equation}
with $k_{2}=k_{2}^{(0)}$ here and henceforth. Define,
see Appendix \ref{AppUsefulIntsAndExps},
\begin{equation} \nonumber
\Delta := \frac{\sqrt{\alpha}}{2k_2{\bf K}(k_2)}
\int_0^{2k_2{\bf K}(k_2)/\sqrt{\alpha}} \der\tau \, \dn^2\left(\frac{\sqrt{\alpha}}{k_2}\tau, k_2 \right) =
\frac{1}{2{\bf K}(k_2)}{\bf E}\Bigl( 2{\bf K}(k_2) ,k_2\Bigr) .
\end{equation}
and $\Gamma_{1}(\tau_0;\mathpzc{p},\mathpzc{q}) := \sin(\tau_0)\,
G_1(\mathpzc{p},\mathpzc{q})$, where
\begin{equation} \nonumber
\begin{split}
G_1(\mathpzc{p},\mathpzc{q}) & =
\frac{1}{T} \int_0^{T} \der\tau \, \sn\left(\frac{\sqrt{\alpha}}{k_2}\tau , k_2\right)\cn
\left(\frac{\sqrt{\alpha}}{k_2}\tau , k_2 \right) \dn\left(\frac{\sqrt{\alpha}}{k_2}\tau , k_2 \right)
\sin(\tau)  \\ & = \frac{1}{4 {\bf K}(k_2)\mathpzc{p}}
\int_0^{4 {\bf K}(k_2)\mathpzc{p}} \der\tau \,
\sn(\tau , k_2 )\cn(\tau, k_2)\dn(\tau, k_2)\sin(k_2\tau/\sqrt{\alpha}) ,
\end{split}
\end{equation}
then use the resonance condition to set
\[\sin\left(\frac{k_2 \tau}{\sqrt{\alpha}}\right) =
\sin\left(\frac{\pi\tau}{2{\bf K}(k_2)}\frac{\mathpzc{q}}{\mathpzc{p}}\right).\]
On inspection of the above we see that $\Gamma_{1}(\tau_0;\mathpzc{p},\mathpzc{q})$
can be calculated similarly to the case inside the separatrix.
It follows that the same applies and $\mathpzc{p}/\mathpzc{q} = 1/2m$ for $m \in \mathbb{N}$.
Then $\langle F_2^{(1)} \rangle = 0$ if
\begin{equation}\label{CEqnRot}
C_{1} = \frac{k_2 \sin(\tau_0)}{\sqrt{\alpha}\Delta}G_1(\mathpzc{p},\mathpzc{q}),
\end{equation}
which requires
\begin{equation} \nonumber
|C_{1}| \le C_1(\mathpzc{p}/\mathpzc{q}) := \frac{k_{2}}{\sqrt{\alpha}\Delta} \, G_1(\mathpzc{p},\mathpzc{q}).
\end{equation}
Some values of the constants $C_{1} (\mathpzc{p}/\mathpzc{q})$
for $\alpha=0.5$ are listed in Table \ref{C0valuesRotations}.

%%%%%%%%%%%%%%%%%%%%%%%%%%%%%%%%%%%%%%%%%%%%%%%%%%%%%%%%%%%%%%%%
\begin{table}[H]
\centering
\begin{tabular}{|ccccc|}
\hline
$\mathpzc{q}$ & $k_2$ & $G_1(1/\mathpzc{q})$ & $\Delta$  & $C_1(1/\mathpzc{q})$\\
\hline
  2 & 0.924397052341 & 0.156774 & 0.474414 & 0.432005 \\
  4 & 0.998899257272 & 0.077612 & 0.225808 & 0.485542 \\
  6 & 0.999986983601 & 0.051734 & 0.150063 & 0.487439 \\
  8 & 0.999999846887 & 0.038800 & 0.112540 & 0.487577 \\
10 & 0.999999998199 & 0.031040 & 0.090032 & 0.487577 \\
12 & 0.999999999978 & 0.025867 & 0.075026 & 0.487577 \\ \hline
\end{tabular}
\caption{\small{Constants for the rotating attractors with $\alpha = 0.5$.}}
\label{C0valuesRotations}
\end{table}
%%%%%%%%%%%%%%%%%%%%%%%%%%%%%%%%%%%%%%%%%%%%%%%%%%%%%%%%%%%%%%%%

For $n \ge 2$ one has
\begin{equation} \nonumber \begin{split}
F_2^{(n)}(\tau) = \frac{4 {\bf K}(k_2)}{\pi}
\left. \frac{\partial}{\partial \varphi} \left( \cos(\tau - \tau_0) \,
\sn(\cdot)\cn(\cdot)\dn(\cdot) - \sqrt{\alpha} \, C_{1} \,
\dn^2(\cdot) \right) \right|_{\varphi = \varphi^{(0)}}
\!\!\!\!\!\!\! \bar{\varphi}^{(n-1)}  + R^{(n)}(\tau),
\end{split}
\end{equation}
where again $R^{(n)}(\tau)$ will be a suitable function
which does not depend on $\bar{\varphi}^{(n-1)}$.
Similarly to the case of libration, $\langle F_2^{(n)} \rangle = 0$ if and only if
\begin{equation} \nonumber
\begin{split}
\langle & R^{(n)} \rangle = - \frac{4 {\bf K}(k_2)}{\pi}
\left( \frac{1}{T}\int_0^T \frac{k_2{\bf K}(k_1)}{\sqrt{\alpha}\pi}
\frac{\partial}{\partial \tau}\Bigl(\sn(\sqrt{\alpha}\tau)\cn(\sqrt{\alpha}\tau)
\dn(\sqrt{\alpha}\tau)\Bigr)\cos(\tau - \tau_0)\thinspace \der \tau \right. \\
& \qquad \left. - \frac{\sqrt{\alpha}\,C_{1}}{T}
\int_0^T \frac{k_2{\bf K}(k_1)}{\sqrt{\alpha}\pi}
\frac{\partial}{\partial \tau}\Bigl(\dn^2(\sqrt{\alpha}\tau)\Bigr)
\thinspace \der \tau \right)\bar{\varphi}^{(n-1)} =
- \frac{4 k_2{\bf K}^2(k_2)}{\sqrt{\alpha}\pi^2}\cos(\tau_0) \,
G_1(\mathpzc{p},\mathpzc{q})\bar{\varphi}^{(n-1)} ,
\end{split}
\end{equation}
so that the coefficient of $\bar{\varphi}^{(n-1)}$ turns out to be
non-vanishing for $\tau_0$ chosen such that equation \eqref{CEqnRot} is satisfied.
Therefore it is possible to fix the initial conditions $\bar{\varphi}^{(n-1)}$
in such a way that one has $\langle F^{(n)}_2(\tau) \rangle = 0$ at all orders,
thus completing the proof. $\hfill\Box$
\end{proof}

Lemma \ref{lem:2} implies that the threshold values of the $\mathpzc{p}\,$:$\,\mathpzc{q}$
resonances are $\gamma(\mathpzc{p}/\mathpzc{q})=C_{1}(\mathpzc{p}/\mathpzc{q})
\varepsilon$ for $\mathpzc{p}=1$ and $\mathpzc{q}$ even,
with the constants $C_{1}(\mathpzc{p}/\mathpzc{q})$ in Table \ref{C0valuesRotations},
while the threshold values of the other resonances are at least $O(\varepsilon^2)$.

\vspace{.3truecm}

Note, in Tables \ref{ThresholdTableLib} and \ref{C0valuesRotations} it is apparent that,
for $\alpha=0.5$, increasing $\mathpzc{q}$ causes the value of $C_1(1/\mathpzc{q})$ to
converge to 0.487577 in both cases. However this does not mean that for
$\gamma < 0.487577\,\varepsilon$ there are
infinitely many attracting solutions with increasing period:
this would be a counter-example to Palis' conjecture!
The explanation for this seeming paradox is as follows:
As $\mathpzc{q}$ increases the solutions move closer and closer to the separatrix
(this can be seen by the corresponding values of $k_1$ and $k_2$),
where the power series expansions \eqref{perturbedsolution}
for the solutions $I(\tau)$ and $\varphi(\tau)$ which were constructed with perturbation theory
converge only for very small values of $\varepsilon$: the larger $\mathpzc{q}$,
the smaller must be $\varepsilon$. In particular, for any fixed $\varepsilon$ there
is only a finite number of periodic solutions which can be studied by perturbation theory.
In particular, for the chosen parameters the only periodic solution
corresponds to the resonance $1$:$2$ inside the separatrix.
We also note that the above analysis
applies only to periodic attractors with $\mathpzc{p}=1$ and $\mathpzc{q}$ even.
However we shall see that the numerical simulations provide also rotating
attractors with period $2\pi$, that is the same period as the forcing:
we expect that continuing the analysis to second order and writing
$\gamma=C_{2}\varepsilon^{2}$, see \cite{POaSL},
would give the threshold values for these periodic attractors.

%%%%%%%%%%%%%%%%%%%%%%%%%%%%%%%%%%%%%%%%%%%%%%%%%%%%%%%%%%%%%%%%
\section{Numerics for the downwards pendulum}
\label{PenNumericSect}
%%%%%%%%%%%%%%%%%%%%%%%%%%%%%%%%%%%%%%%%%%%%%%%%%%%%%%%%%%%%%%%%

We shall investigate the system \eqref{PenEqnTau} in the same region of
phase space used in \cite{DPP}, namely $\theta \in [-\pi, \pi]$, $\theta' \in [-4, 4]$
and calculate the relative areas of the basins of attraction, that is the percentage
of phase space they cover relative to this region. 

Throughout this Section
we fix the parameters $\alpha = 0.5$ and $\beta = 0.1$. These parameter values,
also investigated in \cite{DPP}, correspond to a stable region of the stability tongues
for the system linearised around $\theta=0$, see \cite{ODEBook},
so that the downwards configuration is stable.
The chosen values for the damping coefficient span values between
$\gamma = 0.002$ and $0.06$, of which only $\gamma = 0.03$
was previously investigated in \cite{DPP}. For some values of $\gamma$,
the system exhibits three non-fixed-point attractors, examples of which are shown in
Figure \ref{ConstantAttractorsDown}, as well as the downwards fixed point attractor.
Here and henceforth, for brevity, we shall say that a solution has period $\mathpzc{n}$
if it comes back to its initial value after $\mathpzc{n}$ periods of the forcing.
Of course the upwards fixed point also exists as a solution to the system,
however it is unstable and thus does not attract any non-zero measure subset
of phase space. It can also be seen from Figure \ref{ConstantAttractorsDown} that the
attractive solutions are near the separatrix of the unperturbed system:
this is evident as the curves described by the two rotating attractors are close to that
of the oscillating attractor and the separatrix lies between them. This
observation confirms the reasoning as to why the computation of the
threshold values can only produce valid results for the period 2 oscillating attractor
(see the conclusive remarks in Section \ref{ThresholdsSect}).

%%%%%%%%%%%%%%%%%%%%%%%%%%%%%%%%%%%%%%%%%%%%%%%%%%%%%%%%%%%%%%%%
\begin{figure}[H]
\centering
\includegraphics[width=0.5\textwidth]{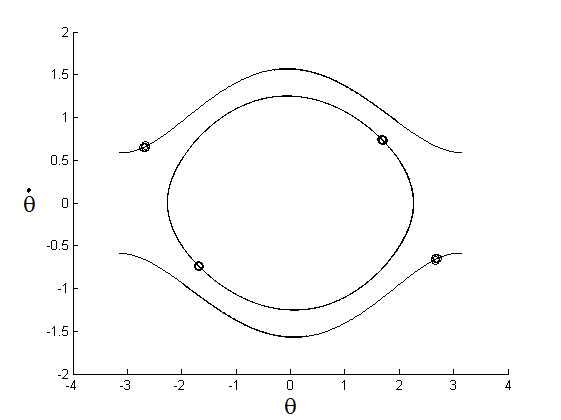}
\caption{\small{Attracting solutions for the system \eqref{PenEqnTau}
with $\alpha=0.5$, $\beta=0.1$ and $\gamma=0.02$,
namely two period 1 rotations and one period 2 solution which oscillates
about the downwards fixed point. Periods can be deduced
by circles corresponding to the Poincar\'e map.}}
\label{ConstantAttractorsDown}
\end{figure}
%%%%%%%%%%%%%%%%%%%%%%%%%%%%%%%%%%%%%%%%%%%%%%%%%%%%%%%%%%%%%%%%

For $\gamma<\bar\gamma_0$, for a suitable $\bar\gamma_0\in(0.4,0.5)$,
the system has three periodic attractors,
in addition to the downwards fixed point: one oscillating
and two rotating attractors. For $\gamma \ge \bar\gamma_{0}$ the two rotating attractors
no longer exist, leaving just the oscillatory attractor and the fixed point.
The basins of attraction for $\gamma = 0.02$,
$0.03$, $0.04$ and $0.05$ are shown in Figure \ref{BasinsConstantDown},
from which we can see that the entire phase space is covered: this suggests that
no other attractors exist, at least for the values of the parameters considered.
The corresponding relative areas, as estimated by the numerical simulations,
are given in Table \ref{TAPCFD} and plotted in Figure \ref{FAPCFD}.
The relative areas of the basins of attraction for positive and negative rotations
have been listed in the same column: numerical simulations found a difference in size
no greater than $10^{-2}\%$ and, due to the symmetries of the system, it is expected
that this difference is numerically induced by the selection of initial conditions
The basins of attraction were estimated using numerical
simulations with 600 000 random initial conditions in phase space.
More notes on the numerics used can be found in Section \ref{NumericsSection}.

It can be seen that the results in Table \ref{TAPCFD} are in agreement
with the calculations for the threshold value for the period 2 oscillatory attractor.
The calculations in Appendix \ref{LinearisedApp} predict that,
for the chosen values $\alpha = 0.5$ and $\beta = 0.1$,
taking $\gamma > \bar{\gamma}_{1}\approx 0.1021$
ensures for the origin to capture a full measure set of initial conditions.
From Table \ref{TAPCFD} a stronger result emerges numerically: 
the fixed point attracts the full phase space, up to a zero-measure set,
for $\gamma \ge \bar\gamma_{2} \approx 0.06$.
Upon comparing results in Table \ref{TAPCFD}, we see that,
essentially, the basin of attraction for the fixed point becomes smaller with an increase
in $\gamma$, up to approximately $0.035$, after which it grows again.
Similarly, by increasing the value of $\gamma$, the basins of attraction
of the oscillating and rotating solutions attractors increase initially, up to some value
(about $0.025$ and $0.035$, respectively), after which they become smaller.
Furthermore, the variations of the relative areas of the basins of attraction are
never monotonic, as one observes slight oscillations for small variations of $\gamma$.
These features seem contrary to systems such as the cubic oscillator,
where decreasing dissipation seems to cause the relative area
of the basin of attraction of the fixed point to decrease monotonically, while
the basins of attraction of the periodic attractors reach a maximum value, after
which their relative areas slightly decrease, see for instance Table III in \cite{CP}.
We note, however, that a more detailed investigation shows that
oscillations occur also in the case of the cubic oscillator. This was already observed
for some values of the parameters (see Table IX in \cite{CP}), but the phenomenon
can also be observed for the parameter values of Table III, simply by considering
smaller changes of the value of $\gamma$ with respect to the values in \cite{CP}.
For instance, by varying slightly $\gamma$
around $0.0005$ (see Table III in \cite{CP} for notations), one finds
for the main attractors the relative areas in Table \ref{table-cubic}.

%%%%%%%%%%%%%%%%%%%%%%%%%%%%%%%%%%%%%%%%%%%%%%%%%%%%%%%%%%%%%%%%
\begin{figure}[H]
\centering
\subfloat[]{\includegraphics[width=0.42\textwidth]{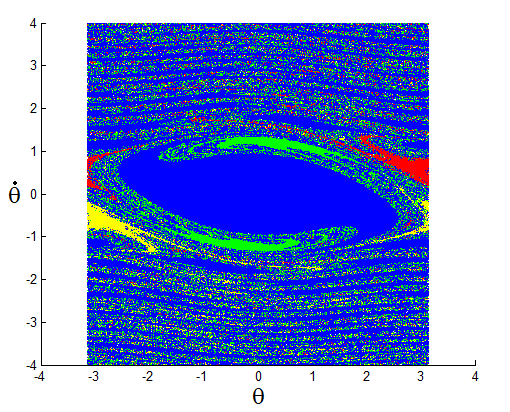}}
\subfloat[]{\includegraphics[width=0.42\textwidth]{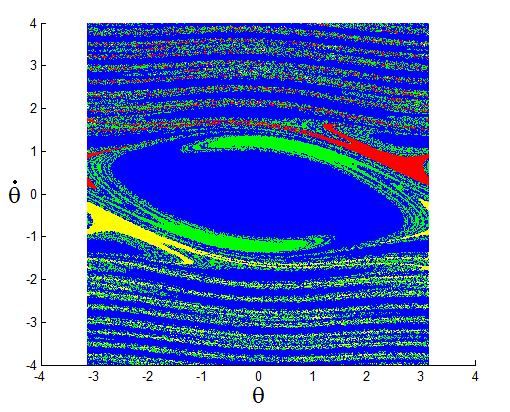}}\\
\subfloat[]{\includegraphics[width=0.42\textwidth]{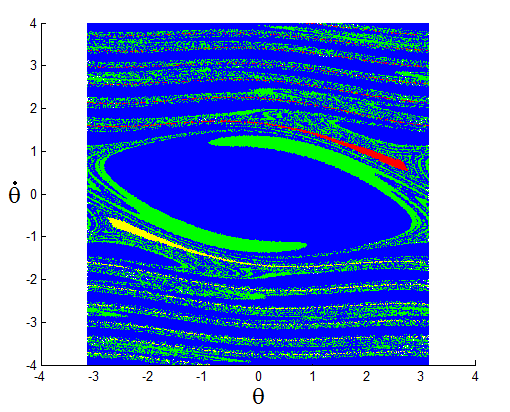}}
\subfloat[]{\includegraphics[width=0.42\textwidth]{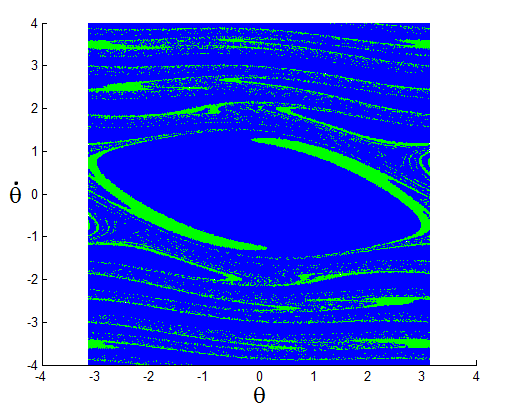}}
\caption{\small{Basins of attraction for the system \eqref{PenEqnTau}
with $\alpha=0.5$, $\beta=0.1$ and (a) $\gamma = 0.02$,
(b) $\gamma = 0.03$, (c) $\gamma = 0.04$ and (d) $\gamma = 0.05$.
The fixed point (FP) is shown in blue, the positive and negative rotating solutions
(PR and NR) are shown in red and yellow, respectively.
and the oscillating solution (OSC) in green.}}
\label{BasinsConstantDown}
\end{figure}
%%%%%%%%%%%%%%%%%%%%%%%%%%%%%%%%%%%%%%%%%%%%%%%%%%%%%%%%%%%%%%%%

In conclusion, for the pendulum, apart from small oscillations,
by decreasing the value of $\gamma$ from $0.06$ to $0.002$,
the basins of attraction of the periodic attractors,
after reaching a maximum value, becomes smaller. A similar phenomenon
occurs also in the cubic oscillator (albeit less pronounced).
However, a new feature of the pendulum,
with respect to the cubic oscillator, is that the basin of attraction
of the origin after reaching a minimum value increases again: the increase
seems to be too large to be ascribed simply to an oscillation, even though
this cannot be excluded. In the case of the cubic oscillator the slight decrease
of the sizes of the basins of attraction of the periodic attractors was due
essentially to the appearance of new attractors and their corresponding basins
of attraction. It would be interesting to investigate further, in the case of the pendulum,
how the basins of attractions,
in particular that of the fixed point,
change by taking smaller and smaller values of $\gamma$.
We intend to come back to this in the future \cite{AnalyticContPen}.

%%%%%%%%%%%%%%%%%%%%%%%%%%%%%%%%%%%%%%%%%%%%%%%%%%%%%%%%%%%%%%%%
\begin{figure}[H]
\begin{floatrow}
\capbtabbox{\small{
\begin{tabular}{cc|c|c|c|c|}
\cline{3-5}
& &\multicolumn{3}{c|}{\multirow{1}{*}{Basin of attraction \%}}\\
\cline{3-5}
& & FP & PR/NR & OSC\\
\hline
\multicolumn{1}{|c|}{\multirow{20}{*}{$\gamma$}}
& 0.0020 & 84.57 & 3.35 & 8.73 \\ \hhline{~----}
\multicolumn{1}{|c|}{} & 0.0050 & 79.91 & 3.88 & 12.32 \\ \hhline{~----}
\multicolumn{1}{|c|}{} & 0.0100 & 72.24 & 4.60 & 18.57 \\ \hhline{~----}
\multicolumn{1}{|c|}{} & 0.0200 & 71.95 & 4.57 & 18.90 \\ \hhline{~----}
\multicolumn{1}{|c|}{} & 0.0230 & 70.73 & 5.18 & 18.90  \\ \hhline{~----}
\multicolumn{1}{|c|}{} & 0.0250 & 69.28 & 5.19 & 20.35  \\ \hhline{~----}
\multicolumn{1}{|c|}{} & 0.0300 & 69.94 & 4.42 & 21.23  \\ \hhline{~----}
\multicolumn{1}{|c|}{} & 0.0330 & 68.92 & 3.75 & 23.59  \\ \hhline{~----}
\multicolumn{1}{|c|}{} & 0.0350 & 68.77 & 3.16 & 24.90 \\ \hhline{~----}
%\multicolumn{1}{|c|}{} & 0.0360 & \cellcolor{yellow} 69.77 & \cellcolor{yellow} 2.78 & \cellcolor{yellow} 24.66  \\ \hhline{~----}
%\multicolumn{1}{|c|}{} & 0.0380 & \cellcolor{yellow} 71.57 & \cellcolor{yellow} 2.17 & \cellcolor{yellow} 24.09 \\ \hhline{~----}
%\multicolumn{1}{|c|}{} & 0.0390 & \cellcolor{yellow} 73.44 & \cellcolor{yellow} 1.86 & \cellcolor{yellow} 22.83 \\ \hhline{~----}
\multicolumn{1}{|c|}{} & 0.0400 & 73.84 & 1.42 & 23.32  \\ \hhline{~----}
\multicolumn{1}{|c|}{} & 0.0500 & 85.61 & 0.00 & 14.39 \\ \hhline{~----}
%\multicolumn{1}{|c|}{} & 0.0530 & \cellcolor{yellow} 88.26 & \cellcolor{yellow} 0.00 & \cellcolor{yellow} 11.74 \\ \hhline{~----}
%\multicolumn{1}{|c|}{} & 0.0550 & \cellcolor{yellow} 91.22 & \cellcolor{yellow} 0.00 & \cellcolor{yellow} 8.78 \\ \hhline{~----}
%\multicolumn{1}{|c|}{} & 0.0570 & \cellcolor{yellow} 93.86 & \cellcolor{yellow} 0.00 & \cellcolor{yellow} 6.14 \\ \hhline{~----}
\multicolumn{1}{|c|}{} & 0.0590 & 96.96 & 0.00 & 3.04 \\ \hhline{~----}
\multicolumn{1}{|c|}{} & 0.0597 & 98.59 & 0.00 & 1.41 \\ \hhline{~----}
\multicolumn{1}{|c|}{} & 0.0600 & $\!\!\!100.00$ & 0.00 & 0.00 \\ \hline
\end{tabular}}}{
\caption{\small{Relative areas of the basins of \newline attraction with
$\alpha=0.5$, $\beta=0.1$ and constant $\gamma$. }}
\label{TAPCFD}}
\ffigbox{
\includegraphics[width=0.5\textwidth]{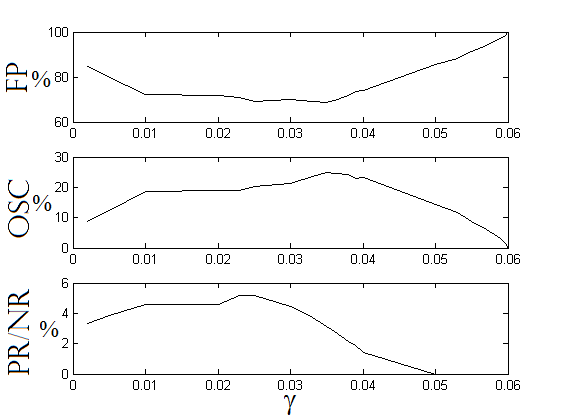}
}{
\caption{\small{Plot of the relative areas of the basins of attraction
with constant $\gamma$ as per Table \ref{TAPCFD}.}}
\label{FAPCFD}}
\end{floatrow}
\end{figure}
%%%%%%%%%%%%%%%%%%%%%%%%%%%%%%%%%%%%%%%%%%%%%%%%%%%%%%%%%%%%%%%%

%%%%%%%%%%%%%%%%%%%%%%%%%%%%%%%%%%%%%%%%%%%%%%%%%%%%%%%%%%%%%%%%
\begin{figure}[H]
\begin{floatrow}
\capbtabbox{\small{
\centering
\begin{tabular}{cc|c|c|c|c|c|c|c|c|}
\cline{3-10}
& &\multicolumn{8}{c|}{\multirow{1}{*}{Basin of attraction \%}}\\
\cline{3-10}
& & 0 & 1/2 & 1/4 & 1a & 1b & 1/6 & 3a & 3b \\
\hline
\multicolumn{1}{|c|}{\multirow{5}{*}{$\gamma$}}
& 0.00052 & 39.03 & 41.73 & 14.72 & 1.22 & 1.22 & 1.59 & 0.25 & 0.25 \\ \hhline{~---------}
\multicolumn{1}{|c|}{}
& 0.00051 & 39.73 & 41.70 & 13.88 & 1.24 & 1.24 & 1.66 & 0.27 & 0.27 \\ \hhline{~---------}
\multicolumn{1}{|c|}{}
& 0.00050 & 38.72 & 41.85 & 14.65 & 1.28 & 1.28 & 1.65 & 0.29 & 0.29 \\ \hhline{~---------}
\multicolumn{1}{|c|}{}
& 0.00049 & 39.26 & 41.96 & 13.81 & 1.29 & 1.29 & 1.77 & 0.27 & 0.32 \\ \hhline{~---------}
\multicolumn{1}{|c|}{}
& 0.00048 & 38.48 & 41.87 & 14.60 & 1.30 & 1.30 & 1.75 & 0.34 & 0.34 \\ \hline
\end{tabular}}}{
\caption{\small{Relative areas of the basins of attraction
of the main attractors for the system $\ddot x + (1+\varepsilon \cos t)x^{3}+
\gamma \dot x =0 $, with $\varepsilon=0.1$ and $\gamma$ around $0.0005$.
The basins of attraction were estimated using numerical simulations with
1000000 random initial conditions in the square $[-1,1]\times[-1,1]$
in phase space.}}
\label{table-cubic}}
\end{floatrow}
\end{figure}
%%%%%%%%%%%%%%%%%%%%%%%%%%%%%%%%%%%%%%%%%%%%%%%%%%%%%%%%%%%%%%%%

%%%%%%%%%%%%%%%%%%%%%%%%%%%%%%%%%%%%%%%%%%%%%%%%%%%%%%%%%%%%%%%%
\subsection{Increasing dissipation}
\label{IncDisSect}
%%%%%%%%%%%%%%%%%%%%%%%%%%%%%%%%%%%%%%%%%%%%%%%%%%%%%%%%%%%%%%%%

In this section we shall investigate the case where dissipation increases with time,
up to a time $T_0$, after which it remains constant. We will consider
a linear increase in dissipation from a value $\gamma_0$ at time $t = 0$ up to
$\gamma_1$ at time $T_0$, that is (see Figure \ref{FricIncFig})
\begin{equation}
\gamma =  \gamma(t) = \left\{ \begin{array}{ll}
\displaystyle{ \gamma_{0} + (\gamma_1 - \gamma_0) \frac{\tau}{T_0} } , &0 \leq \tau < T_0 , \\
\gamma_1 , & T_0 \leq \tau .
\end{array}\right.
\label{T0eqn}
\end{equation}
Although this is a greatly simplified model of what might take place in reality,
it serves the purpose of demonstrating the significant effects of initially
non-constant dissipation on the final basins of attraction.
Below we will consider explicitly the cases $\gamma_0=0.2$ and $\gamma_1=0.3$, $0.4$ and $0.5$.

As previously mentioned, we expect that increasing the value of $T_0$ results
in the relative areas of the basins of attraction moving along the curves plotted
for constant $\gamma$. The movement along these curves starts at $\gamma_1$
and goes towards $\gamma_0$. In particular, any values of the relative area
of a basin of attraction for constant values of the damping coefficient
between $\gamma_1$ and $\gamma_0$ are traced as the value of $T_0$
varies for time-dependent dissipation. This movement
along the curve is not linear with the value of $T_0$ but asymptotic, with
the relative area of the basin of attraction tending towards the value
at $\gamma=\gamma_0$ as $T_0 \to \infty$, providing the attractors existing
at $\gamma=\gamma_0$ also persist at $\gamma_1$. When the attractors which persist
at $\gamma=\gamma_1$ are a proper subset of those which exist at $\gamma_0$,
we expect the persisting attractors to absorb the remaining phase space left by the
attractors which have disappeared: thus their basins of attraction should
be greater than or equal to those at $\gamma=\gamma_0$.
For the values of the parameters in the chosen range,
only these two cases may occur as the solutions
which exist for $\gamma=\gamma_1$ also exist at $\gamma=\gamma_0$,
see Table \ref{TAPCFD}.

%%%%%%%%%%%%%%%%%%%%%%%%%%%%%%%%%%%%%%%%%%%%%%%%%%%%%%%%%%%%%%%%
\begin{figure}[H]
\centering
\includegraphics[width=0.34\textwidth]{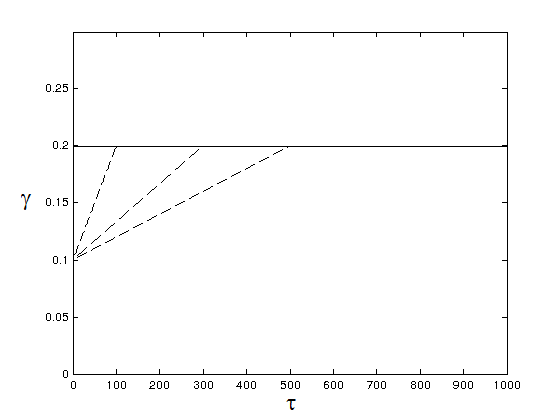}
\caption{\small{Plot of equation \eqref{T0eqn} with $\gamma_0 = 0.1$, $\gamma_1 = 0.2$
and varying $T_0$.}}
\label{FricIncFig}
\end{figure}
%%%%%%%%%%%%%%%%%%%%%%%%%%%%%%%%%%%%%%%%%%%%%%%%%%%%%%%%%%%%%%%%

The results in Tables \ref{TAPD002003},
\ref{TAPD002004} and \ref{TAPD002005} are in agreement
with the expectations above. It can be seen from Tables \ref{TAPD002003}
and \ref{TAPD002004} that the relative areas of the basins of attraction trace
those of constant $\gamma$. In particular, the relative area of the basins of attraction
of the rotating attractors tends towards that at $\gamma=\gamma_0$ from above,
despite having a smaller basin of attraction for the chosen values of $\gamma_1$.
More precisely, the longer $T_{0}$, the closer is the relative area of the basin of attraction
to the value it has at $\gamma=\gamma_{0}$. However, the convergence to the
asymptotic value is rather slow: for instance in Table \ref{TAPD002003},
even $T_{0}=2000$ is not enough to reach the values corresponding to $\gamma=0.02$.
The simulations for time varying dissipation have in general taken 300 000 or 400 000
initial conditions in phase space. In some cases more points were used for additional
accuracy.

%%%%%%%%%%%%%%%%%%%%%%%%%%%%%%%%%%%%%%%%%%%%%%%%%%%%%%%%%%%%%%%%
\begin{figure}[H]
\begin{floatrow}
\capbtabbox{
\begin{tabular}{cc|c|c|c|c|c|c|}
\cline{3-5}
& &\multicolumn{3}{c|}{\multirow{1}{*}{Basin of Attraction \%}}\\
\cline{3-5}
& & FP & PR/NR & OSC \\
\hline
\multicolumn{1}{|c|}{\multirow{7}{*}{$T_0$}}
& 0 & 69.94 & 4.42 & 21.23 \\ \hhline{~----}
\multicolumn{1}{|c|}{} & 25 & 69.80 & 4.42 & 21.36 \\ \hhline{~----}
\multicolumn{1}{|c|}{} & 50 & 69.57 & 4.45 & 21.52  \\ \hhline{~----}
\multicolumn{1}{|c|}{} & 75 & 69.40 & 4.64 & 21.33  \\ \hhline{~----}
\multicolumn{1}{|c|}{} & 100 & 68.84 & 4.85 & 21.47 \\ \hhline{~----}
\multicolumn{1}{|c|}{} & 200 & 68.82 & 5.10 & 20.99 \\ \hhline{~----}
\multicolumn{1}{|c|}{} & 500 & 69.86 & 5.17 & 19.80 \\ \hhline{~----}
\multicolumn{1}{|c|}{} & 1000 & 70.65 & 5.18 & 18.99 \\ \hhline{~----}
\multicolumn{1}{|c|}{} & 2000 & 71.17 & 5.11 & 18.61 \\ \hline
\end{tabular}}{
\hskip-.5truecm
\caption{\small{Relative areas of the basins of attraction with $\gamma_0 = 0.02$,
$\gamma_1 = 0.03$ and $T_0$ varying.}}
\label{TAPD002003}}
\hskip.5truecm
\ffigbox{
\includegraphics[width=0.45\textwidth]{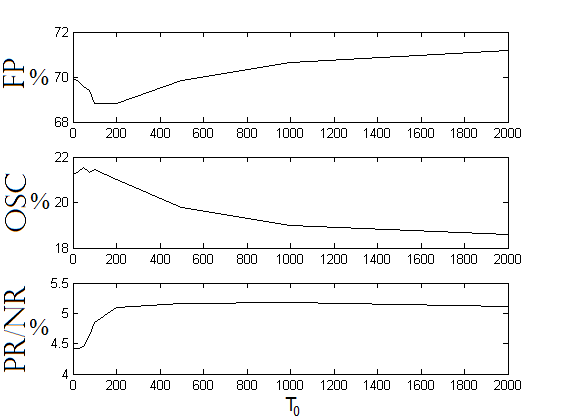}
}{
\caption{\small{Plot of the relative areas of the basins of attraction
as per Table \ref{TAPD002003}}.}
\label{FAPD002003}}
\end{floatrow}
\end{figure}
%%%%%%%%%%%%%%%%%%%%%%%%%%%%%%%%%%%%%%%%%%%%%%%%%%%%%%%%%%%%%%%%

In Table \ref{TAPD002005}, we see that for $\gamma_0=0.02$ and $\gamma_1=0.05$
only two attractors are present: indeed the
oscillating attractors no longer exist for $\gamma=0.05$. Hence,
when $\gamma(t)$ increases and crosses the value at which those attractors disappear,
all the trajectories that up to this time were converging to them, will fall into
the basins of attraction of the persisting attractors, that is the fixed point
and the oscillating solution. In particular the corresponding basins of attraction
will acquire relative areas larger than those they have at constant $\gamma=\gamma_0$,
because of the absorption of all these trajectories.
It is difficult to predict how such trajectories are distributed among the persisting attractors.
In the case of Table \ref{TAPD002005} they seem to be attracted slightly more by
the fixed point, even though the percentage increase is larger for the oscillating solution.

%%%%%%%%%%%%%%%%%%%%%%%%%%%%%%%%%%%%%%%%%%%%%%%%%%%%%%%%%%%%%%%%
\begin{figure}[H]
\begin{floatrow}
\capbtabbox{
\begin{tabular}{cc|c|c|c|c|c|c|}
\cline{3-5}
& &\multicolumn{3}{c|}{\multirow{1}{*}{Basin of Attraction \%}}\\
\cline{3-5}
& & FP & PR/NR & OSC \\
\hline
\multicolumn{1}{|c|}{\multirow{8}{*}{$T_0$}}
& 0 & 73.84 & 1.42 & 23.32 \\ \hhline{~----}
\multicolumn{1}{|c|}{} & 25 & 73.66 & 1.44 & 23.45  \\ \hhline{~----}
\multicolumn{1}{|c|}{} & 50 & 73.37 & 1.50 & 23.63  \\ \hhline{~----}
\multicolumn{1}{|c|}{} & 75 & 72.15 & 2.22 & 23.41  \\ \hhline{~----}
\multicolumn{1}{|c|}{} & 100 & 68.69 & 3.50 & 24.31 \\ \hhline{~----}
\multicolumn{1}{|c|}{} & 200 & 67.46 & 4.76 & 23.03 \\ \hhline{~----}
\multicolumn{1}{|c|}{} & 500 & 69.05 & 5.02 & 20.92 \\ \hhline{~----}
\multicolumn{1}{|c|}{} & 1000 & 69.85 & 5.17 & 19.81 \\ \hhline{~----}
\multicolumn{1}{|c|}{} & 2000 & 70.63 & 5.18 & 19.02 \\ \hline
\end{tabular}}{
\hskip-.5truecm
\caption{\small{Relative areas of the basins of attraction with $\gamma_0 = 0.02$,
$\gamma_1 = 0.04$ and $T_0$ varying.}}
\label{TAPD002004}}
\hskip.5truecm
\ffigbox{
\includegraphics[width=0.45\textwidth]{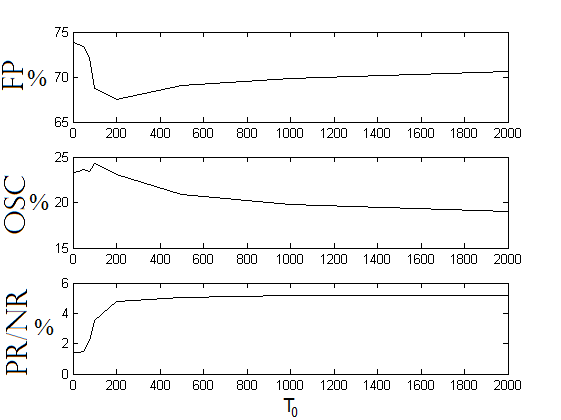}
}{
\caption{\small{Plot of the relative areas of the basins of attraction
as per Table \ref{TAPD002004}.}}
\label{FAPD002004}}
\end{floatrow}
\end{figure}
%%%%%%%%%%%%%%%%%%%%%%%%%%%%%%%%%%%%%%%%%%%%%%%%%%%%%%%%%%%%%%%%

%%%%%%%%%%%%%%%%%%%%%%%%%%%%%%%%%%%%%%%%%%%%%%%%%%%%%%%%%%%%%%%%
\begin{figure}[H]
\begin{floatrow}
\capbtabbox{
\begin{tabular}{cc|c|c|c|c|c|}
\cline{3-5}
& &\multicolumn{2}{c|}{\multirow{1}{*}{Basin of Attraction \%}}\\
\cline{3-5}
& & FP &  OSC \\
\hline
\multicolumn{1}{|c|}{\multirow{8}{*}{$T_0$}}
& 0 & 85.61 & 14.39 \\ \hhline{~----}
\multicolumn{1}{|c|}{} & 25 & 86.01 & 13.99  \\ \hhline{~----}
\multicolumn{1}{|c|}{} & 50 & 86.18 & 13.82  \\ \hhline{~----}
\multicolumn{1}{|c|}{} & 75 & 84.19 & 15.87  \\ \hhline{~----}
\multicolumn{1}{|c|}{} & 100 & 80.42 & 19.58 \\ \hhline{~----}
\multicolumn{1}{|c|}{} & 200 & 75.47 & 24.53 \\ \hhline{~----}
\multicolumn{1}{|c|}{} & 500 & 77.95 & 22.06 \\ \hhline{~----}
\multicolumn{1}{|c|}{} & 1000 & 77.55 & 22.45 \\ \hhline{~----}
\multicolumn{1}{|c|}{} & 1500 & 78.03 & 21.97 \\ \hline
\end{tabular}}{
\hskip-.5truecm
\caption{\small{Relative areas of the basins of attraction with $\gamma_0 = 0.02$,
$\gamma_1 = 0.05$ and $T_0$ varying.}}
\label{TAPD002005}}
\hskip.5truecm
\ffigbox{
\includegraphics[width=0.45\textwidth]{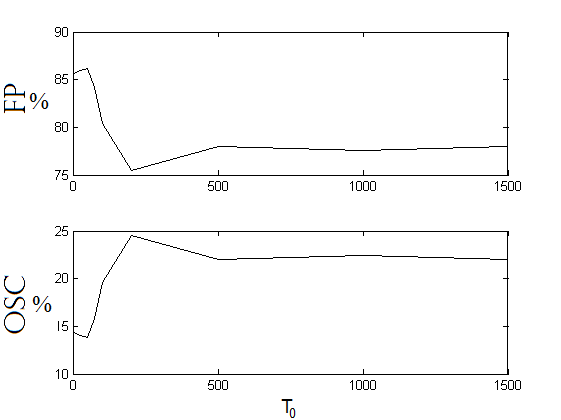}
}{
\caption{\small{Plot of the relative areas of the basins of attraction
as per Table \ref{TAPD002005}.}}
\label{FAPD002005}}
\end{floatrow}
\end{figure}
%%%%%%%%%%%%%%%%%%%%%%%%%%%%%%%%%%%%%%%%%%%%%%%%%%%%%%%%%%%%%%%%

%%%%%%%%%%%%%%%%%%%%%%%%%%%%%%%%%%%%%%%%%%%%%%%%%%%%%%%%%%%%%%%%
\subsection{Decreasing dissipation}
%%%%%%%%%%%%%%%%%%%%%%%%%%%%%%%%%%%%%%%%%%%%%%%%%%%%%%%%%%%%%%%%

In this section we conversely look at the damping coefficient decreasing
from some value $\gamma_0 > \gamma_1$, with different rates of decrease,
see Figure \ref{FricDecFig}. We will consider the cases $\gamma_0=0.04$ and $\gamma_1=0.02$,
$\gamma_0=0.04$ and $\gamma_1=0.03$, $\gamma_0=0.05$ and $\gamma_1=0.02$.

%%%%%%%%%%%%%%%%%%%%%%%%%%%%%%%%%%%%%%%%%%%%%%%%%%%%%%%%%%%%%%%%
\begin{figure}[H]
\centering
\includegraphics[width=0.34\textwidth]{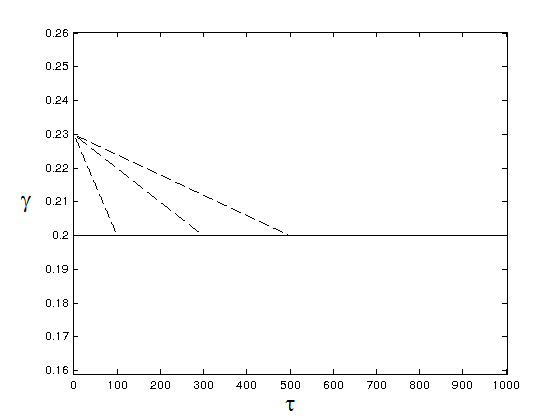}
\caption{\small{Plot of equation \eqref{T0eqn}
with $\gamma_0 = 0.23$, $\gamma_1 = 0.2$ and varying $T_0$.}}
\label{FricDecFig}
\end{figure}
%%%%%%%%%%%%%%%%%%%%%%%%%%%%%%%%%%%%%%%%%%%%%%%%%%%%%%%%%%%%%%%%

In this situation it is possible that more attractors exist at $\gamma_1$ than at $\gamma_0$,
see Table \ref{TAPCFD}.
We again expect that increasing $T_0$ causes the relative areas of the basins of attraction to
tend towards those at $\gamma_0$. The result of this is that solutions which do
not exist at $\gamma_0$ will attract less and less of the phase space as $T_0$
increases, and for $T_0$ large enough their basins of attraction will tend to zero.

%%%%%%%%%%%%%%%%%%%%%%%%%%%%%%%%%%%%%%%%%%%%%%%%%%%%%%%%%%%%%%%%
\begin{figure}[H]
\begin{floatrow}
\capbtabbox{
\begin{tabular}{cc|c|c|c|c|c|c|}
\cline{3-5}
& &\multicolumn{3}{c|}{\multirow{1}{*}{Basin of Attraction \%}}\\
\cline{3-5}
& & FP & PR/NR & OSC \\
\hline
\multicolumn{1}{|c|}{\multirow{7}{*}{$T_0$}}
& 0 & 71.95 & 4.57 & 18.90 \\ \hhline{~----}
\multicolumn{1}{|c|}{} & 25 & 71.85 & 4.60 & 18.94  \\ \hhline{~----}
\multicolumn{1}{|c|}{} & 50 & 72.36 & 4.48 & 18.69  \\ \hhline{~----}
\multicolumn{1}{|c|}{} & 75 & 73.64 & 4.43 & 17.51  \\ \hhline{~----}
\multicolumn{1}{|c|}{} & 100 & 74.10 & 4.32 & 17.27 \\ \hhline{~----}
\multicolumn{1}{|c|}{} & 200 & 72.31 & 2.99 & 21.71 \\ \hhline{~----}
\multicolumn{1}{|c|}{} & 500 & 71.51 & 2.09 & 24.31 \\ \hhline{~----}
\multicolumn{1}{|c|}{} & 1000 & 72.61 & 1.79 & 23.81 \\ \hhline{~----}
\multicolumn{1}{|c|}{} & 2000 & 73.11 & 1.63 & 23.64 \\ \hline
\end{tabular}}{
\hskip-.5truecm
\caption{\small{Relative areas of the basins of attraction with $\gamma_0 = 0.04$,
$\gamma_1 = 0.02$ and $T_0$ varying.}}
\label{TAPD004002}}
\hskip.5truecm
\ffigbox{
\includegraphics[width=0.45\textwidth]{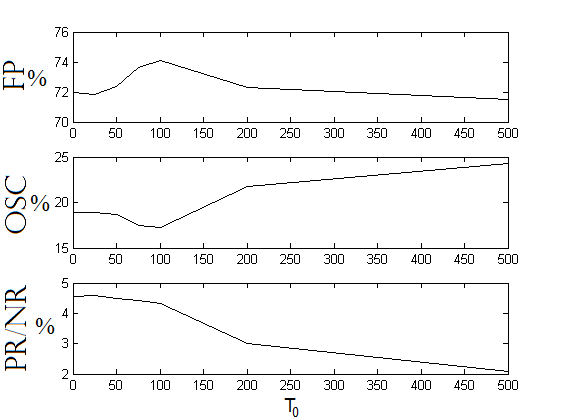}
}{
\caption{\small{Plot of the relative areas of the basins of attraction
as per Table \ref{TAPD004002}.}}
\label{FAPD004002}}
\end{floatrow}
\end{figure}
%%%%%%%%%%%%%%%%%%%%%%%%%%%%%%%%%%%%%%%%%%%%%%%%%%%%%%%%%%%%%%%%

\vspace{-.4cm}

%%%%%%%%%%%%%%%%%%%%%%%%%%%%%%%%%%%%%%%%%%%%%%%%%%%%%%%%%%%%%%%%
\begin{figure}[H]
\begin{floatrow}
\capbtabbox{
\begin{tabular}{cc|c|c|c|c|c|c|}
\cline{3-5}
& &\multicolumn{3}{c|}{\multirow{1}{*}{Basin of Attraction \%}}\\
\cline{3-5}
& & FP & PR/NR & OSC \\
\hline
\multicolumn{1}{|c|}{\multirow{6}{*}{$T_0$}}
& 0 & 69.94 & 4.42 & 21.23 \\ \hhline{~----}
\multicolumn{1}{|c|}{} & 25 & 69.73 & 4.50 & 21.28  \\ \hhline{~----}
\multicolumn{1}{|c|}{} & 50 & 70.78 & 4.23 & 20.77  \\ \hhline{~----}
\multicolumn{1}{|c|}{} & 75 & 72.03 & 3.45 & 21.07  \\ \hhline{~----}
\multicolumn{1}{|c|}{} & 100 & 71.77 & 2.95 & 22.33 \\ \hhline{~----}
\multicolumn{1}{|c|}{} & 500 & 72.61 & 1.79 & 23.81 \\ \hhline{~----}
\multicolumn{1}{|c|}{} & 1000 & 73.11 & 1.63 & 23.64 \\ \hline
\end{tabular}}{
\hskip-.5truecm
\caption{\small{Relative areas of the basins of attraction with $\gamma_0 = 0.04$,
$\gamma_1 = 0.03$ and $T_0$ varying.}}
\label{TAPD004003}}
\hskip.5truecm
\ffigbox{
\includegraphics[width=0.45\textwidth]{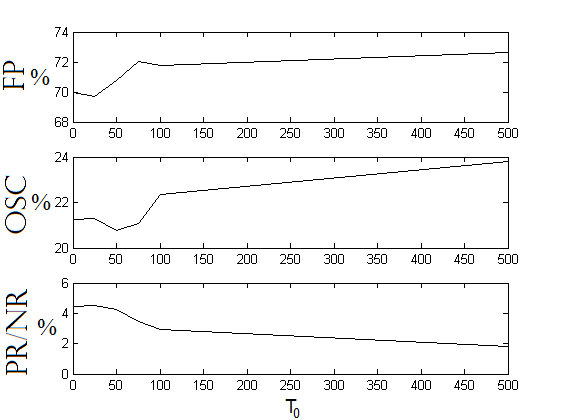}
}{
\caption{\small{Plot of the relative areas of the basins of attraction
as per Table \ref{TAPD004003}.}}
\label{FAPD004003}}
\end{floatrow}
\end{figure}
%%%%%%%%%%%%%%%%%%%%%%%%%%%%%%%%%%%%%%%%%%%%%%%%%%%%%%%%%%%%%%%%

\vspace{-.4cm}

%%%%%%%%%%%%%%%%%%%%%%%%%%%%%%%%%%%%%%%%%%%%%%%%%%%%%%%%%%%%%%%%
\begin{figure}[H]
\begin{floatrow}
\capbtabbox{
\begin{tabular}{cc|c|c|c|c|c|c|}
\cline{3-5}
& &\multicolumn{3}{c|}{\multirow{1}{*}{Basin of Attraction \%}}\\
\cline{3-5}
& & FP & PR/NR & OSC \\
\hline
\multicolumn{1}{|c|}{\multirow{7}{*}{$T_0$}}
& 0 & 71.95 & 4.57 & 18.90 \\ \hhline{~----}
\multicolumn{1}{|c|}{} & 25 & 72.13 & 4.58 & 18.72  \\ \hhline{~----}
\multicolumn{1}{|c|}{} & 50 & 72.97 & 4.24 & 18.55  \\ \hhline{~----}
\multicolumn{1}{|c|}{} & 75 & 76.14 & 3.15 & 17.56  \\ \hhline{~----}
\multicolumn{1}{|c|}{} & 100 & 77.02 & 2.18 & 18.62 \\ \hhline{~----}
\multicolumn{1}{|c|}{} & 200 & 77.71 & 0.31 & 21.67 \\ \hhline{~----}
\multicolumn{1}{|c|}{} & 500 & 81.94 & 0.00 & 18.06 \\ \hline
\end{tabular}}{
\caption{\small{Relative areas of the basins of attraction with $\gamma_0 = 0.05$,
$\gamma_1 = 0.02$ and $T_0$ varying.}}
\label{TAPD005002}}
\hskip.5truecm
\ffigbox{
\includegraphics[width=0.45\textwidth]{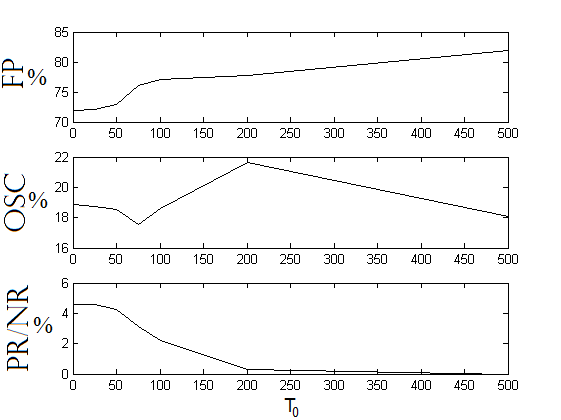}
}{
\caption{\small{Plot of the relative areas of the basins of attraction
as per Table \ref{TAPD005002}.}}
\label{FAPD005002}}
\end{floatrow}
\end{figure}
%%%%%%%%%%%%%%%%%%%%%%%%%%%%%%%%%%%%%%%%%%%%%%%%%%%%%%%%%%%%%%%%

Tables \ref{TAPD004002} and \ref{TAPD004003} illustrate cases in which the system
admits the same set of attractors for both values $\gamma_0$ and $\gamma_1$
of the damping coefficient.
An example of what happens when an attractor exists at $\gamma_1$ but
not at $\gamma_0$ can be seen in the results of Table \ref{TAPD005002},
where $\gamma(t)$ varies from $0.05$ to $0.02$. As the damping coefficient
starts off at a larger value, then decreases to some smaller value,
we also expect the change in the basins of attraction to happen
over shorter values of $T_0$. The reasoning for this is simply that larger values of
dissipation cause trajectories to move onto attractors in less time. Increasing $T_0$
results in the system remaining at higher values of dissipation for more time and
thus trajectories land on the attractors in less time.

%%%%%%%%%%%%%%%%%%%%%%%%%%%%%%%%%%%%%%%%%%%%%%%%%%%%%%%%%%%%%%%%
\section{Numerics for the inverted pendulum}
\label{InvPenNumericSect}
%%%%%%%%%%%%%%%%%%%%%%%%%%%%%%%%%%%%%%%%%%%%%%%%%%%%%%%%%%%%%%%%

The upwards fixed point of the inverted pendulum can be made stable
for large values of $\beta$, i.e when the amplitude of the
oscillations is large relative to the length of the pendulum.
In this section we numerically investigate the system \eqref{PenEqnTau}
for parameter values for which this happens.
For simplicity, as mentioned in the introduction,
we refer to this case as the inverted pendulum.
It can be more convenient to set $x=\pi+\xi$, so as to centre the origin at
the upwards position of the pendulum. Then the equations of motion become
\begin{equation}
\label{PenEqnTauInv}
\xi'' + f(\tau)\sin{\xi} + \gamma\xi' = 0, \qquad f(\tau) = -(\alpha + \beta \cos{\tau}),
\end{equation}
\[\alpha = \frac{g}{\ell\omega^2}, \qquad \beta = \frac{b}{\ell}, \qquad \tau = \omega t.\]
The difference between equations \eqref{PenEqnTau} and \eqref{PenEqnTauInv} is that here
the parameter $\alpha$ has changed sign.

%%%%%%%%%%%%%%%%%%%%%%%%%%%%%%%%%%%%%%%%%%%%%%%%%%%%%%%%%%%%%%%%
\begin{figure}[H]
\centering
\subfloat[]{\includegraphics[width=0.36\textwidth]{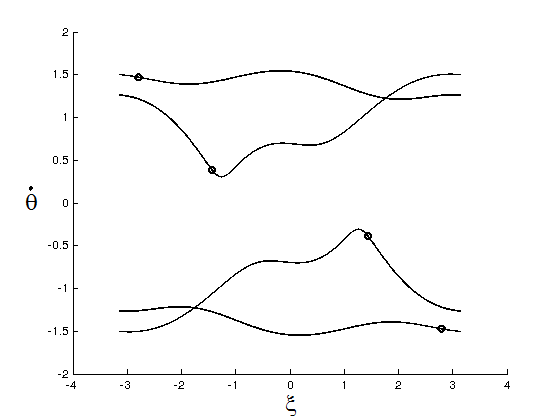}}
\subfloat[]{\includegraphics[width=0.36\textwidth]{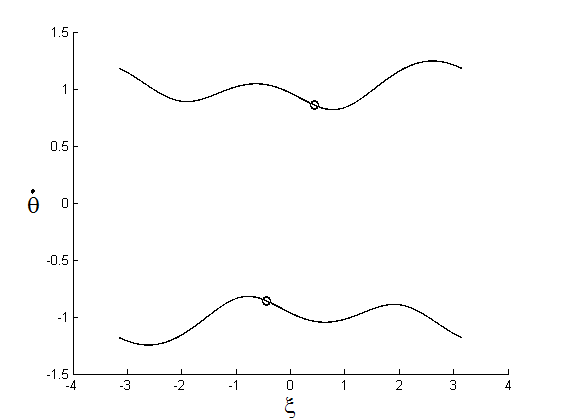}}\\
\subfloat[]{\includegraphics[width=0.36\textwidth]{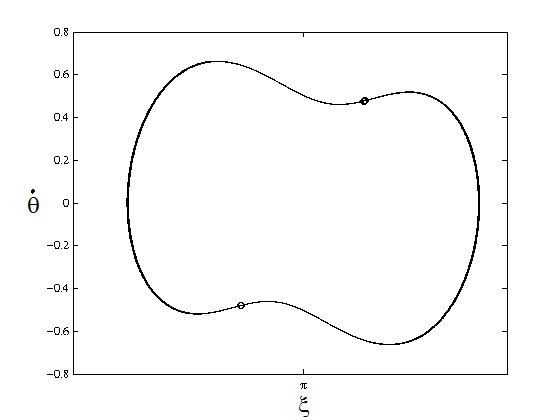}}
\subfloat[]{\includegraphics[width=0.36\textwidth]{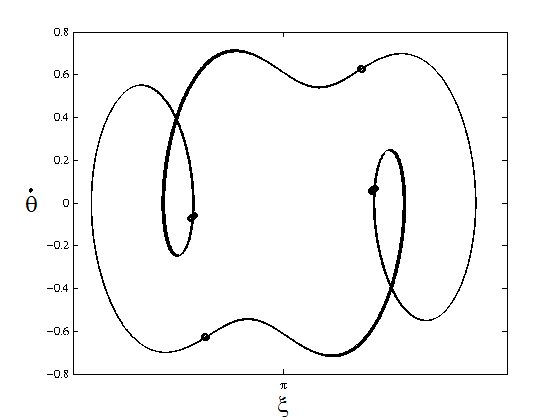}}
\caption{\small{Attracting solutions for the system \eqref{PenEqnTauInv} with
$\alpha=0.1$ and $\beta=0.545$.
Figure (a) shows an example of the positive and negative rotating attractors
with period 2, taken for $\gamma = 0.05$. Figure (b) shows the positive and negative
attractors with period 1 when $\gamma = 0.23$. Figures (c) and (d) show the oscillatory
attractors with periods 2 and 4 respectively when $\gamma$ is taken equal to 0.2725.
These solutions oscillate about the downwards fixed point $\xi = \pi$ and the axis has
been shifted to show a connected curve in phase space.  The period of each solution
can be deduced from the circles corresponding to the Poincar\'e map.}}
\label{ConstantAttractors}
\end{figure}
%%%%%%%%%%%%%%%%%%%%%%%%%%%%%%%%%%%%%%%%%%%%%%%%%%%%%%%%%%%%%%%%

The stability of the upwards fixed point creates interesting dynamics to study numerically,
however it means that the system is no longer a perturbation
of the simple pendulum system. This in turn has the result that the analysis in
Section \ref{ThresholdsSect} to compute the thresholds of friction
cannot be applied. However, we shall see that the very idea that attractors
have a threshold value below which they always exist does not apply
to the inverted pendulum: both increasing and
decreasing the damping coefficient can create and destroy solutions.

For numerical simulations of the inverted pendulum throughout we shall
take parameters $\alpha = 0.1$ and $\beta = 0.545$,
which are within the stable regime for the upwards position.
For these parameter values the function $f(\tau)$ changes sign.
As such, the analysis in Appendix \ref{LinearisedApp} cannot be applied.
Again these particular parameter values
were also investigated in \cite{DPP}, but with a small value for
the damping coefficient, that is $\gamma=0.08$,
where only three attractors appeared in the system:
the upwards fixed point and the left and right rotating solutions.
We have opted to focus on larger dissipation because
the range of values considered for $\gamma$ allows us to incorporate already a
a wide variety of dynamics, in which remarkable phenomena occur, and,
at the same time, larger values of $\gamma$ are better suited to numerical simulation
because of the shorter integration times.
We note that, for the values of the parameters chosen,
no strange attractors arise: numerically, besides the fixed points,
only periodic attractors are found.

For constant dissipation we provide results for $\gamma \in [0.05, 0.6]$.
These values of $\gamma$ are considered to correspond to large dissipation,
however non-fixed-point solutions still persist due to the large coefficient $\beta$
of the forcing term. Some examples of the persisting non-fixed-point solutions
can be seen in Figure \ref{ConstantAttractors}; of course,
the exact form of the curves depends on the particular choices of $\gamma$.
For $\gamma$ varying in the range considered the following attractors arise
(we follow the same convention as in Section \ref{PenNumericSect}
when saying that a solution has period $\mathpzc{n}$):
the upwards fixed point (FP), the downwards fixed point (DFP),
a positively rotating period 1 solution (PR),
a negatively rotating period 1 solution (NR),
a positively rotating period 2 solution (PR2),
a negatively rotating period 2 solution (NR2),
an oscillating period 2 solution (DO2) and
an oscillating period 4 solution (DO4). However,
as we will see, the solution DO2 deserves a separate, more detailed discussion.

%%%%%%%%%%%%%%%%%%%%%%%%%%%%%%%%%%%%%%%%%%%%%%%%%%%%%%%%%%%%%%%%
\begin{figure}[H]
\centering
\subfloat[]{\includegraphics[width=0.33\textwidth]{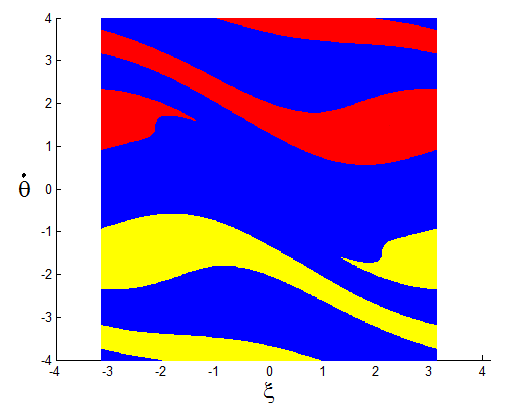}}
\subfloat[]{\includegraphics[width=0.33\textwidth]{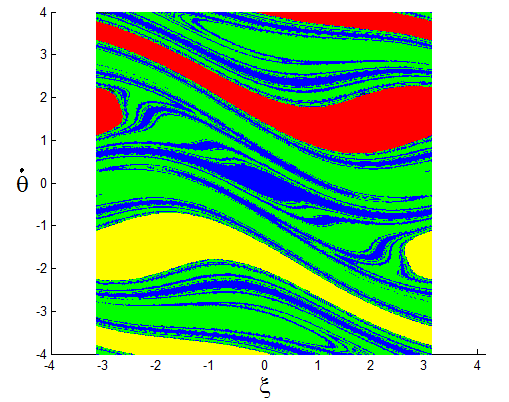}}
\subfloat[]{\includegraphics[width=0.33\textwidth]{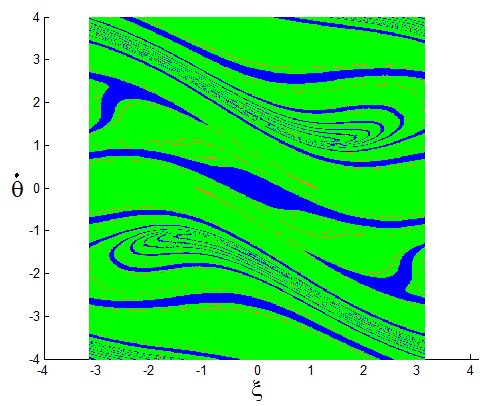}}
\caption{\small{Basins of attraction for constant dissipation with $\gamma = 0.2$, $0.23$ and $0.2725$
from left to right respectively. The fixed point (FP) is shown in blue,
the positively rotating solution (PR) in red,
the negatively rotating solution (NR) in yellow, the downwards oscillation with period 2 (DO2)
in green and finally the downwards oscillation with period 4 (DO4) in orange.}}
\label{BasinsConstant}
\end{figure}
%%%%%%%%%%%%%%%%%%%%%%%%%%%%%%%%%%%%%%%%%%%%%%%%%%%%%%%%%%%%%%%%

The basins of attraction corresponding to the values $\gamma = 0.2$, $0.23$
and $0.2725$ are shown in Figure \ref{BasinsConstant}.
The relative areas of the basins of attraction for $\gamma \in [0.05,0.6]$
are listed in Table \ref{TAPCF}. Again the positive and negative rotations
have been listed together as any difference in the size of their basins
of attraction is expected to be due to numerical inaccuracies.

In Figure \ref{UpwardsConstantDissipFig}, there is a large jump in the relative area of
the basin of attraction for the upwards fixed point (FP) between the values of $\gamma = 0.22$
and $0.225$, approximately:
this is due to the appearance of the oscillatory solution
which oscillates about the downwards pointing fixed point $(\xi = \pi)$. For values of
$\gamma$ slightly larger than $0.22$ large amounts of phase space move close to the
solution DO2, where they remain for long periods of time; however they do not land
on the solution and are eventually attracted to FP. The percentage
of phase space which does this is marked in Figure \ref{UpwardsConstantDissipFig}
by a dotted line, which becomes solid when the trajectories
remain on the solution for all time (however, see comments below).

%%%%%%%%%%%%%%%%%%%%%%%%%%%%%%%%%%%%%%%%%%%%%%%%%%%%%%%%%%%%%%%%
\begin{table}[H]
\centering
\begin{tabular}{cc|c|c|c|c|c|c|c|}
\cline{3-8}
& &\multicolumn{6}{c|}{\multirow{1}{*}{Basin of Attraction \%}}\\
\cline{3-8}
& & FP & DFP & PR/NR & PR2/NR2 & DO2 & DO4 \\
\hline
\multicolumn{1}{|c|}{\multirow{22}{*}{$\gamma$}}
& 0.0500 & 4.30 & 0.00 & 0.00 & 47.85 & 0.00 & 0.00 \\ \hhline{~-------}
\multicolumn{1}{|c|}{} & 0.0750 & 5.08 & 0.00 & 0.00 & 47.46 & 0.00 & 0.00  \\ \hhline{~-------}
\multicolumn{1}{|c|}{} & 0.0900 & 7.41 & 0.00 & 0.00 & 46.30 & 0.00 & 0.00 \\ \hhline{~-------}
\multicolumn{1}{|c|}{} & 0.1000 & 8.51 & 0.00 & 45.74 & 0.00 & 0.00 & 0.00  \\ \hhline{~-------}
\multicolumn{1}{|c|}{} & 0.1700 & 49.65 & 0.00 & 25.17 & 0.00 & 0.00 & 0.00 \\ \hhline{~-------}
\multicolumn{1}{|c|}{} & 0.2000 & 64.31 & 0.00 & 17.84 & 0.00 & 0.00 & 0.00 \\ \hhline{~-------}
\multicolumn{1}{|c|}{} & 0.2230 & 72.09 & 0.00 & 13.95 & 0.00 & 0.00 & 0.00 \\ \hhline{~-------}
\multicolumn{1}{|c|}{} & 0.2250 & 27.60 & 0.00 & 13.59 & 0.00 & 45.22 & 0.00 \\ \hhline{~-------}
\multicolumn{1}{|c|}{} & 0.2300 & 25.00 & 0.00 & 12.68 & 0.00 & 49.61 & 0.00 \\ \hhline{~-------}
\multicolumn{1}{|c|}{} & 0.2500 & 15.87 & 0.00 & 8.49 & 0.00 & 67.16 & 0.00  \\ \hhline{~-------}
\multicolumn{1}{|c|}{} & 0.2690 & 16.50 & 0.00 & 2.13 & 0.00 & 79.25 & 0.00  \\ \hhline{~-------}
\multicolumn{1}{|c|}{} & 0.2694 & 17.26 & 0.00 & 0.00 & 0.00 & 82.74 & 0.00 \\ \hhline{~-------}
\multicolumn{1}{|c|}{} & 0.2700 & 17.28 & 0.00 & 0.00 & 0.00 & 82.73  & 0.00  \\ \hhline{~-------}
\multicolumn{1}{|c|}{} & 0.2725 & 17.21 & 0.00 & 0.00 & 0.00 & 79.44 & 3.35   \\ \hhline{~-------}
\multicolumn{1}{|c|}{} & 0.2800 & 17.30 & 0.00 & 0.00 & 0.00 & 82.70 & 0.00   \\ \hhline{~-------}
\multicolumn{1}{|c|}{} & 0.2900 & 17.30 & 0.00 & 0.00 & 0.00 & 82.70 & 0.00 \\ \hhline{~-------}
\multicolumn{1}{|c|}{} & 0.3000 & 16.97 & 0.00 & 0.00 & 0.00 & 83.03 & 0.00 \\ \hhline{~-------}
\multicolumn{1}{|c|}{} & 0.4600 & 9.61 & 0.00 & 0.00 & 0.00 & 90.39 & 0.00 \\ \hhline{~-------}
\multicolumn{1}{|c|}{} & 0.4700 & 9.80 & 90.20 & 0.00 & 0.00 & 0.00 & 0.00 \\ \hhline{~-------}
\multicolumn{1}{|c|}{} & 0.5000 & 10.06 & 89.94 & 0.00 & 0.00 & 0.00 & 0.00 \\ \hhline{~-------}
\multicolumn{1}{|c|}{} & 0.5500 & 10.32 & 89.68 & 0.00 & 0.00 & 0.00 & 0.00 \\ \hhline{~-------}
\multicolumn{1}{|c|}{} & 0.6000 & 8.79 & 91.21 & 0.00 & 0.00 & 0.00 & 0.00 \\ \hline
\end{tabular}
\caption{\small{Relative areas of the basins of attraction with constant
damping coefficient $\gamma$. The solutions are named as per
Figures \ref{ConstantAttractors} and \ref{BasinsConstant} with the addition
of the downwards fixed point (DFP) and the rotating period 2 solutions
(PR2/NR2). For details on the DO2 solution we refer to the text.}}
\label{TAPCF}
\end{table}
%%%%%%%%%%%%%%%%%%%%%%%%%%%%%%%%%%%%%%%%%%%%%%%%%%%%%%%%%%%%%%%%

\vspace{-.4cm}

%%%%%%%%%%%%%%%%%%%%%%%%%%%%%%%%%%%%%%%%%%%%%%%%%%%%%%%%%%%%%%%%
\begin{figure}[H]
\includegraphics[width=0.95\textwidth]{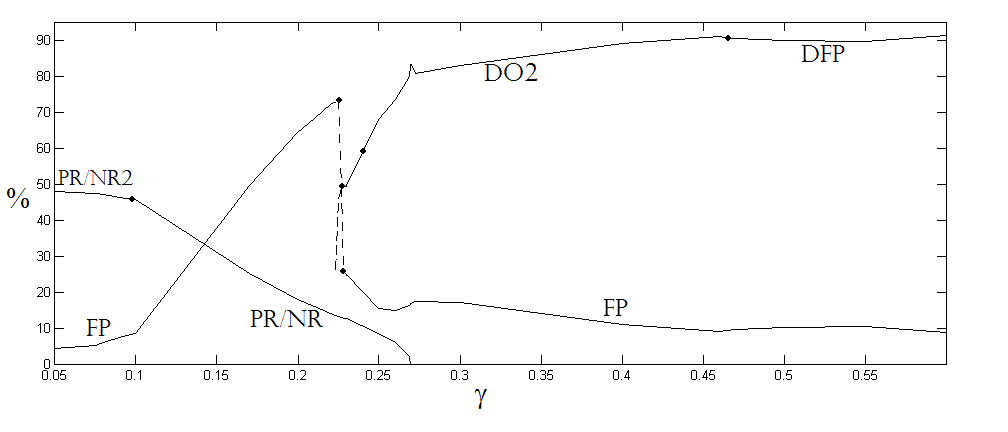}
\caption{\small{Relative sizes of basins of attraction with constant $\gamma$ as per
Table \ref{TAPCF}. The lines are labeled as in Table \ref{TAPCF} and
regions in which a bifurcation takes place are marked with a dot.
The basin of attraction for the oscillatory solution
with period 4 (DO4), has not been included due to its small size and the solutions low
range of persistence with respect to $\gamma$.
The broken lines for FP and DO2 represent areas of transition
just before DO2 (and the solutions created by the period doubling bifurcation) becomes
stable, see text.}}
\label{UpwardsConstantDissipFig}
\end{figure}
%%%%%%%%%%%%%%%%%%%%%%%%%%%%%%%%%%%%%%%%%%%%%%%%%%%%%%%%%%%%%%%%

The solution DO4 listed in Table \ref{TAPCF} is found to persist only in the interval
$[0.272,0.27422]$, where it only attracts a small amount of the phase space.
As such it has not been included in Figure \ref{UpwardsConstantDissipFig}.

%%%%%%%%%%%%%%%%%%%%%%%%%%%%%%%%%%%%%%%%%%%%%%%%%%%%%%%%%%%%%%%%
\begin{figure}[H]
\centering
\subfloat[]{\includegraphics[width=0.25\textwidth]{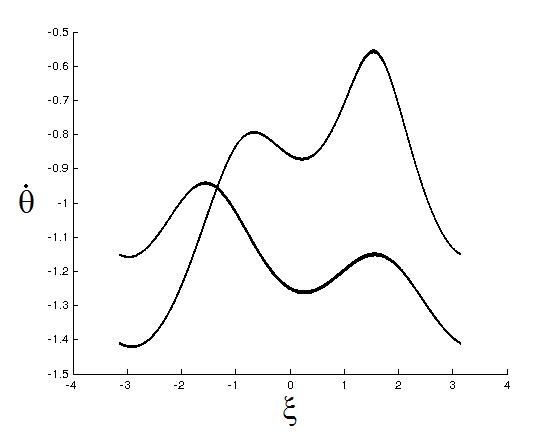}}
\subfloat[]{\includegraphics[width=0.25\textwidth]{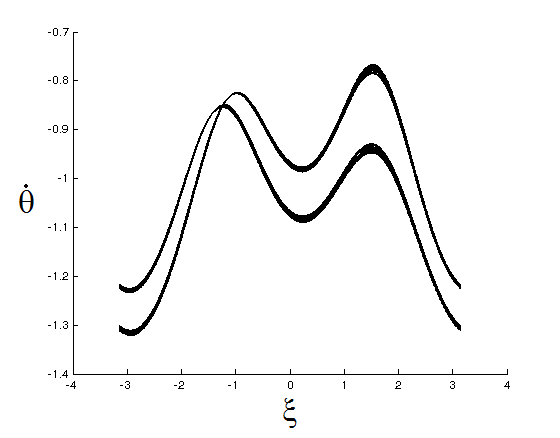}}
\subfloat[]{\includegraphics[width=0.25\textwidth]{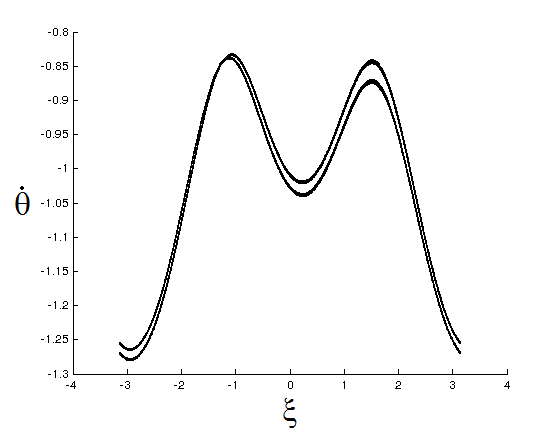}}
\subfloat[]{\includegraphics[width=0.25\textwidth]{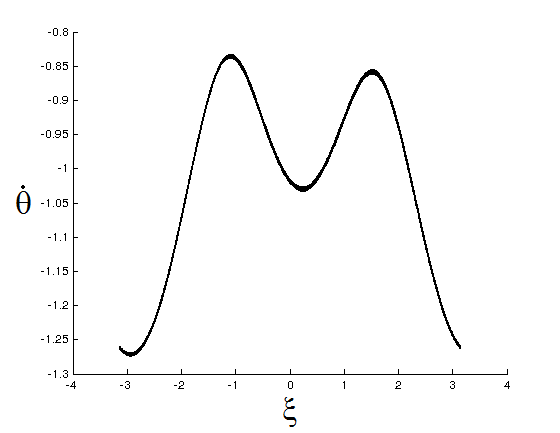}}
\caption{\small{The transition from the period 2 rotating solutions to the period 1 rotating
solutions. $\gamma = 0.09$, 0.094, 0.096 and 0.098 from (a) to (d) respectively.}}
\label{InvRotTransP2ToP1}
\end{figure}
%%%%%%%%%%%%%%%%%%%%%%%%%%%%%%%%%%%%%%%%%%%%%%%%%%%%%%%%%%%%%%%%

Numerical simulations can be used to find estimates for the values of $\gamma$ at which
solutions appear/disappear. This is done by starting with initial conditions on a
solution, then allowing the parameter $\gamma$ to be varied to see for which value
that solution vanishes. In Figure \ref{InvRotTransP2ToP1} the transition from the period 2
rotating solution to the period 1 rotating solution can be observed: by moving
towards smaller values of $\gamma$, this corresponds to a period doubling bifurcation
\cite{bifurcation1,bifurcation2} (period halving, if we think of $\gamma$ as increasing).
Similarly, starting on the period 1 rotating attractor and increasing further $\gamma$,
the solution disappears at $\gamma \approx 0.2694$.

The same analysis can be done for the oscillatory solutions.
We find that the downwards oscillatory solution labeled DO2
persists for $\gamma$ in the interval $[0.224,0.46]$, approximately.
However such a solution is really a period 2 solution only
for $\gamma$ greater than $\gamma\approx0.24$. In the interval $[0.224,0.24]$
the trajectory is ``thick'', see Figure \ref{DO2Fuzzyness}:
only due to its similarity to the solution DO2 and to prevent Table \ref{TAPCF}
having yet more columns, the basin of attraction of these solutions in that range
has also been listed under that of DO2. Nevertheless, by moving $\gamma$ backwards
starting from $0.24$ we have a sequence of period doubling bifurcations,
corresponding to values of $\gamma$ closer and closer to each other.
A period doubling cascade is expected to lead to a chaotic attractor,
which, however, may survive only for a tiny window of values of $\gamma$
(at $\gamma=0.223$ it has already definitely disappeared) and has a very small
basin of attraction (for $\gamma$ getting closer to the value 0.223
its relative area goes to zero).
The appearance of chaotic attractors for small sets
of parameters and with small basins of attraction has been observed
in similar contexts of multistable dissipative systems
close to the conservative limit \cite{FeudelGrebogi}.
For the value $\gamma = 0.223$ numerical
simulations find that trajectories remain in the region of phase space occupied
by DO2 for a long time, before eventually moving onto the fixed point.
As $\gamma$ increases further towards $\gamma\approx0.46$,
the amplitude of the period 2 oscillatory solution gradually decreases and taking
$\gamma$ larger causes a slow spiral into the downwards fixed point,
which now becomes stable.

%%%%%%%%%%%%%%%%%%%%%%%%%%%%%%%%%%%%%%%%%%%%%%%%%%%%%%%%%%%%%%%%
\begin{figure}[H]
\centering
\subfloat[]{\includegraphics[width=0.25\textwidth]{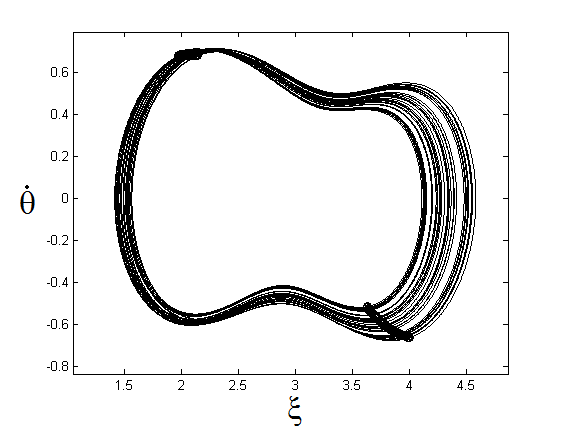}}
\subfloat[]{\includegraphics[width=0.25\textwidth]{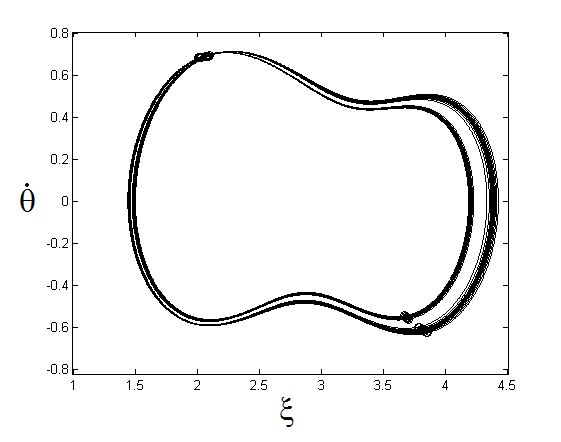}}
\subfloat[]{\includegraphics[width=0.25\textwidth]{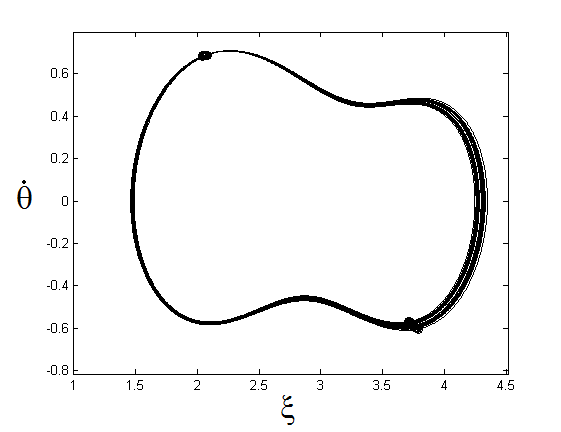}}
\subfloat[]{\includegraphics[width=0.25\textwidth]{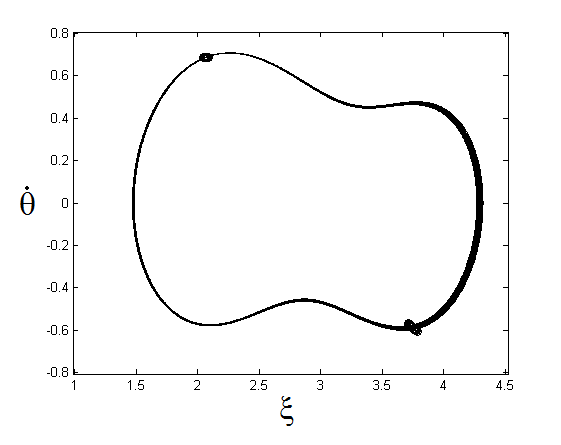}}
\caption{\small{The solution DO2 for different values of time independent $\gamma$. As the
damping coefficient is increased, the solution becomes more clearly defined:
this is due to a period halving bifurcation which stops when the period becomes $2$.
The damping coefficient is $\gamma = 0.23$, 0.235, 0.239 and 0.24 from (a) to (d),
respectively. The position of the trajectory at every $2\pi$, i.e the period of the
forcing, is shown by circles.}}
\label{DO2Fuzzyness}
\end{figure}
%%%%%%%%%%%%%%%%%%%%%%%%%%%%%%%%%%%%%%%%%%%%%%%%%%%%%%%%%%%%%%%%

%%%%%%%%%%%%%%%%%%%%%%%%%%%%%%%%%%%%%%%%%%%%%%%%%%%%%%%%%%%%%%%%
\subsection{Increasing dissipation}
\label{IncDisInvSect}
%%%%%%%%%%%%%%%%%%%%%%%%%%%%%%%%%%%%%%%%%%%%%%%%%%%%%%%%%%%%%%%%

As mentioned in Section \ref{IncDisSect}, as $\gamma$ increases from
$\gamma_0$ to $\gamma_1$ it is expected that taking $T_0$ larger causes the sizes of the
basins of attraction to tend towards the sizes of the corresponding basins when
$\gamma = \gamma_0$, when the set of attractors remains the same
for all values in between. If an attractor is replaced by a new attractor (by bifurcation),
then the new attractor inherits the basin of attraction of the old one.

We shall begin by fixing $\gamma_1 = 0.2$ and $\gamma_0\in[0.05,0.2]$,
as for $\gamma=\gamma_{1}$ the basins of attraction are not so
sensitive to initial conditions, see Figure \ref{BasinsConstant}(a),
and for $\gamma$ in that range the set of attractors consists only
of the the upwards fixed point and two rotating solutions;
moreover the profiles of the corresponding relative areas plotted in
Figure \ref{UpwardsConstantDissipFig} are rather smooth and
do not present any sharp jumps.

%%%%%%%%%%%%%%%%%%%%%%%%%%%%%%%%%%%%%%%%%%%%%%%%%%%%%%%%%%%%%%%%
\begin{figure}[H]
\begin{floatrow}
\capbtabbox{
\begin{tabular}{cc|c|c|}
\cline{3-4}
& &\multicolumn{2}{c|}{\multirow{1}{*}{Basin of Attraction \%}}\\
\cline{3-4}
& & FP & PR/NR \\
\hline
\multicolumn{1}{|c|}{\multirow{9}{*}{$T_0$}}
& 0 & 64.31 & 17.84 \\ \hhline{~---}
\multicolumn{1}{|c|}{} & 25 & 51.41 & 24.30  \\ \hhline{~---}
\multicolumn{1}{|c|}{} & 50 & 32.09 & 33.95  \\ \hhline{~---}
\multicolumn{1}{|c|}{} & 75 & 23.48 & 38.26  \\ \hhline{~---}
\multicolumn{1}{|c|}{} & 100 & 17.09 & 41.45 \\ \hhline{~---}
\multicolumn{1}{|c|}{} & 200 & 6.85 & 46.58  \\ \hhline{~---}
\multicolumn{1}{|c|}{} & 500 & 4.35 & 47.82  \\ \hhline{~---}
\multicolumn{1}{|c|}{} & 1000 & 4.15 & 47.92  \\ \hhline{~---}
\multicolumn{1}{|c|}{} & 1500 & 4.09 & 47.96 \\ \hline
\end{tabular}}{
\caption{\small{Relative areas of the basins of attraction with $\gamma_0 = 0.05$,
$\gamma_1 = 0.2$ and $T_0$ varying.}}
\label{TAP00502}}
\hskip.5truecm
\ffigbox{
\includegraphics[width=0.45\textwidth]{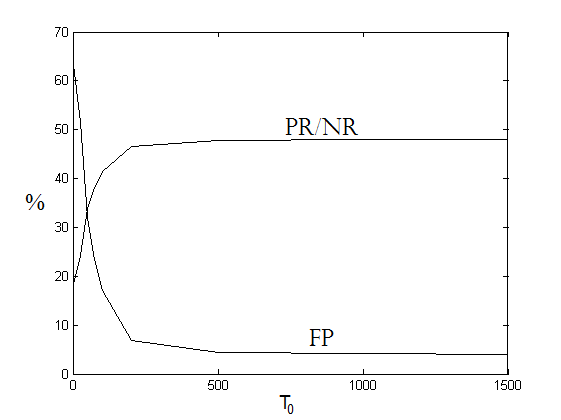}
}{
\caption{\small{Plot of the relative areas of the basins of attraction
as per Table \ref{TAP00502}.}}
\label{FAP00502}}
\end{floatrow}
\end{figure}
%%%%%%%%%%%%%%%%%%%%%%%%%%%%%%%%%%%%%%%%%%%%%%%%%%%%%%%%%%%%%%%%

%%%%%%%%%%%%%%%%%%%%%%%%%%%%%%%%%%%%%%%%%%%%%%%%%%%%%%%%%%%%%%%%
\begin{figure}[H]
\begin{floatrow}
\capbtabbox{
\begin{tabular}{cc|c|c|}
\cline{3-4}
& &\multicolumn{2}{c|}{\multirow{1}{*}{Basin of Attraction \%}}\\
\cline{3-4}
& & FP & PR/NR \\
\hline
\multicolumn{1}{|c|}{\multirow{9}{*}{$T_0$}}
& 0 & 64.31 & 17.84 \\ \hhline{~---}
\multicolumn{1}{|c|}{} & 25 & 49.06 & 25.47 \\ \hhline{~---}
\multicolumn{1}{|c|}{} & 50 & 38.54 & 30.73 \\ \hhline{~---}
\multicolumn{1}{|c|}{} & 75 & 31.25 & 34.37 \\ \hhline{~---}
\multicolumn{1}{|c|}{} & 100 & 25.67 & 37.16 \\ \hhline{~---}
\multicolumn{1}{|c|}{} & 150 & 18.12 & 40.94 \\ \hhline{~---}
\multicolumn{1}{|c|}{} & 200 & 14.14 & 42.93 \\ \hhline{~---}
\multicolumn{1}{|c|}{} & 500 & 9.81 & 45.09 \\ \hhline{~---}
\multicolumn{1}{|c|}{} & 1000 & 9.45 & 45.27 \\ \hline
\end{tabular}}{
\caption{\small{Relative areas of the basins of attraction
with $\gamma_0 = 0.1$, $\gamma_1 = 0.2$ and $T_0$ varying.}}
\label{TAP0102}}
\hskip.5truecm
\ffigbox{
\includegraphics[width=0.45\textwidth]{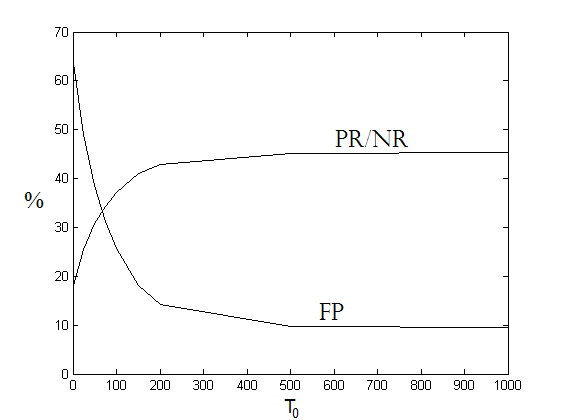}
}{
\caption{\small{Plot of the relative areas of the basins of attraction
as per Table \ref{TAP0102}.}}
\label{FAP0102}}
\end{floatrow}
\end{figure}
%%%%%%%%%%%%%%%%%%%%%%%%%%%%%%%%%%%%%%%%%%%%%%%%%%%%%%%%%%%%%%%%

Tables \ref{TAP00502}, \ref{TAP0102} and \ref{TAP01702}
show the relative area of each basin of attraction as $\gamma$ increases
from 0.05, 0.1 and 0.17, respectively, to 0.2 with varying $T_0$.
It can be seen from the results in Tables \ref{TAP0102} to \ref{TAP01702}
that the numerical simulations are in agreement
with the above expectation. With the exception of Table \ref{TAP00502} the relative
areas of the basins of attraction tend towards those when $\gamma$ is
kept constant at $\gamma_0$. The exception of the case of Table \ref{TAP00502}
is due to the fact that the set of attractors has changed as $\gamma$
passes from $0.05$ to $0.2$: the period 2 rotating solutions have been destroyed
and replaced by the period 1 rotating solutions. However, when the transition occurs,
the new attractors are located in phase space very close to the previous ones and
we find that the initial conditions which were heading towards or had indeed landed
on the period 2 rotating solutions move onto the now present period
1 rotating solutions. On the other hand, when the damping coefficient
crosses the value $\gamma \approx 0.1$, the attractor undergoes topological changes,
but, apart from that, the transition is rather smooth: the location in phase space
and the basin of attraction change continuously.
In conclusion, we find that the relative areas of the basins of attraction
for the two period 1 rotating attractors (PR/NR) tend towards those the
now destroyed period 2 rotating attractors (PR2/NR2) had at $\gamma=\gamma_0$.
As in Section \ref{PenNumericSect} we expect that the sizes of the basin of
attraction at $\gamma=\gamma_0$ are recovered asymptotically as $T_0\to\infty$.
Nevertheless, once more, the larger $T_0$ the smaller is the variation in the relative area:
for instance in Table \ref{TAP00502} for $T_0=100$ the relative area of the
basin of attraction of the fixed point has become nearly $1/4$
of the value for $\gamma=0.2$ constant,
while in order to have a further reduction by a factor of $4$ one has to take $T_0=1000$.

%%%%%%%%%%%%%%%%%%%%%%%%%%%%%%%%%%%%%%%%%%%%%%%%%%%%%%%%%%%%%%%%
\begin{figure}[H]
\begin{floatrow}
\capbtabbox{
\begin{tabular}{cc|c|c|}
\cline{3-4}
& &\multicolumn{2}{c|}{\multirow{1}{*}{Basin of Attraction \%}}\\
\cline{3-4}
& & FP & PR/NR \\
\hline
\multicolumn{1}{|c|}{\multirow{7}{*}{$T_0$}}
& 0 & 64.31 & 17.84 \\ \hhline{~---}
\multicolumn{1}{|c|}{} & 25 & 58.54& 20.73 \\ \hhline{~---}
\multicolumn{1}{|c|}{} & 50 & 56.78 & 21.62 \\ \hhline{~---}
\multicolumn{1}{|c|}{} & 100 & 55.41 & 22.29 \\ \hhline{~---}
\multicolumn{1}{|c|}{} & 200 & 53.69 & 23.15 \\ \hhline{~---}
\multicolumn{1}{|c|}{} & 500 & 51.93 & 24.04 \\ \hhline{~---}
\multicolumn{1}{|c|}{} & 1000 & 50.80 & 24.60 \\ \hhline{~---}
\multicolumn{1}{|c|}{} & 1500 & 50.62 & 24.69 \\ \hline
\end{tabular}}{
\caption{\small{Relative areas of the basins of attraction with $\gamma_0 = 0.17$,
$\gamma_1 = 0.2$ and $T_0$ varying.}}
\label{TAP01702}}
\hskip.5truecm
\ffigbox{
\includegraphics[width=0.45\textwidth]{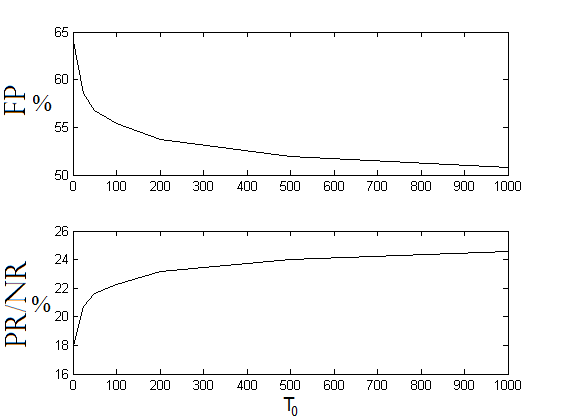}
}{
\caption{\small{Plot of the relative areas of the basins of attraction
as per Table \ref{TAP01702}.}}
\label{FAP01702}}
\end{floatrow}
\end{figure}
%%%%%%%%%%%%%%%%%%%%%%%%%%%%%%%%%%%%%%%%%%%%%%%%%%%%%%%%%%%%%%%%

%%%%%%%%%%%%%%%%%%%%%%%%%%%%%%%%%%%%%%%%%%%%%%%%%%%%%%%%%%%%%%%%
\begin{figure}[H]
\begin{floatrow}
\capbtabbox{
\begin{tabular}{cc|c|c|c|c|c|c|}
\cline{3-5}
& &\multicolumn{3}{c|}{\multirow{1}{*}{Basin of Attraction  \%}}\\
\cline{3-5}
& & FP & PR/NR & DO2 \\
\hline
\multicolumn{1}{|c|}{\multirow{16}{*}{$T_0$}}
& 0 & 25.00 & 12.68 & 49.61 \\ \hhline{~----}
\multicolumn{1}{|c|}{} & 10 & 24.86 & 13.81 & 47.52 \\ \hhline{~----}
\multicolumn{1}{|c|}{} & 15 & 24.34 & 14.84 & 45.98 \\ \hhline{~----}
\multicolumn{1}{|c|}{} & 20 & 24.43 & 15.57 & 44.43 \\ \hhline{~----}
\multicolumn{1}{|c|}{} & 25 & 24.77 & 16.04 & 43.15 \\ \hhline{~----}
\multicolumn{1}{|c|}{} & 50 & 27.60 & 16.98 & 38.45 \\ \hhline{~----}
\multicolumn{1}{|c|}{} & 75 & 30.12 & 17.28 & 35.32 \\ \hhline{~----}
\multicolumn{1}{|c|}{} & 100 & 33.08 & 17.42 & 32.08 \\ \hhline{~----}
\multicolumn{1}{|c|}{} & 150 & 38.29 & 17.56 & 26.58 \\ \hhline{~----}
\multicolumn{1}{|c|}{} & 200 & 42.60 & 17.66 & 22.08 \\ \hhline{~----}
\multicolumn{1}{|c|}{} & 300 & 49.36 & 17.73 & 15.18 \\ \hhline{~----}
\multicolumn{1}{|c|}{} & 400 & 54.20 & 17.75 & 10.30 \\ \hhline{~----}
\multicolumn{1}{|c|}{} & 500 & 57.37 & 17.77 & 7.08 \\ \hhline{~----}
\multicolumn{1}{|c|}{} & 1000 & 63.39 & 17.78 & 1.06 \\ \hhline{~----}
\multicolumn{1}{|c|}{} & 1500 & 64.26 & 17.79 & 0.16 \\ \hhline{~----}
\multicolumn{1}{|c|}{} & 2000 & 64.38 & 17.80 & 0.03 \\ \hline
\end{tabular}}{
\hskip-.5truecm
\caption{\small{Relative areas of the basins of attraction
with $\gamma_0 = 0.2$, $\gamma_1 = 0.23$ and $T_0$ varying.}}
\label{TAP02023}}
\hskip.5truecm
\ffigbox{
\includegraphics[width=0.45\textwidth]{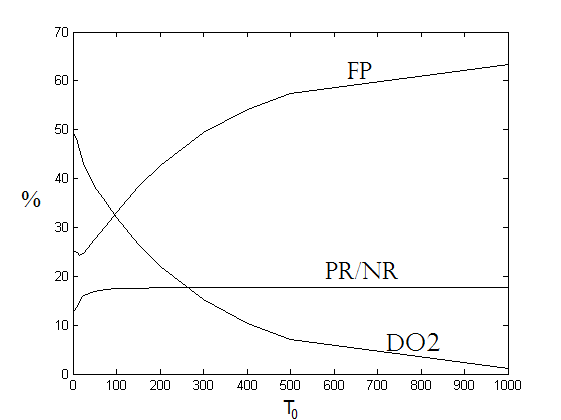}
}{
\caption{\small{Plot of the relative areas of the basins of attraction
as per Table \ref{TAP02023}.}}
\label{FAP02023}}
\end{floatrow}
\end{figure}
%%%%%%%%%%%%%%%%%%%%%%%%%%%%%%%%%%%%%%%%%%%%%%%%%%%%%%%%%%%%%%%%

We now consider the case where either $\gamma_0 =0.2$ and
$\gamma_{1}=0.23$ or $0.2725$ or $\gamma_0=0.23$ and $\gamma_{1}=0.2725$.
Such values for $\gamma$ offer more complexities as not only are there more
attractors to consider, but one may have attractors (PR and NR)
that are destroyed without leaving any trace. When this happens it is not obvious
which persisting attractor will inherit their basins of attraction.
The result of this could cause the final basins of attraction to be drastically
different from those for constant $\gamma$ and even not monotonically
increasing or decreasing as the value of $T_0$ is increased.

In Table \ref{TAP02023} we see that initially, for values of $T_0$ not too large,
the basin of attraction of FP slightly reduces in size,
while those of the rotating solutions PR/NR increase substantially.
Instead, for larger values of $T_0$, the basin of attraction of FP
increases appreciably, while those of PR/NR increase very slowly.
Apparently, the rotating solutions react more quickly as $\gamma$ is varied,
attracting phase space faster, so that the relative areas of their
basins of attraction tend towards the values at $\gamma = \gamma_0$
for shorter initial times $T_0$. It would be interesting to study further this phenomenon.

When $\gamma(t)$ varies from $\gamma=0.2$ to $\gamma=0.2725$ and from
$\gamma=0.23$ to $\gamma=0.2725$, the rotating solutions PR/NR disappear,
so that their basins of attractions are absorbed by the persisting attractors.
In Table \ref{TAP0202725} one sees a very slow movement towards
global attraction of the upwards fixed point, which is the only attractor
persisting for both $\gamma_0$ and $\gamma_1$. However even taking
$T_0 = 5000$ is not enough for the asymptotic behaviour to be approached.
The results in Table \ref{TAP02302725} show that, by taking $T_0$ larger and
larger, the relative areas of the basins of attraction of FP and DO4 both tend
to the values corresponding to $\gamma_0=0.23$ (in particular the basin of
attraction of DO4 becomes negligible).
Nearly all trajectories which were converging towards the rotating solutions
before the latter disappeared are attracted by the period two oscillations.
This could be due to the fact that DO2 is the closest attractor in phase space
which persists at both $\gamma_0$ and $\gamma_1$.

%%%%%%%%%%%%%%%%%%%%%%%%%%%%%%%%%%%%%%%%%%%%%%%%%%%%%%%%%%%%%%%%
\begin{figure}[H]
\begin{floatrow}
\capbtabbox{
\begin{tabular}{cc|c|c|c|c|c|}
\cline{3-5}
& &\multicolumn{3}{c|}{\multirow{1}{*}{Basin of Attraction \%}}\\
\cline{3-5}
& & FP & DO2 & DO4 \\
\hline
\multicolumn{1}{|c|}{\multirow{11}{*}{$T_0$}}
& 0 & 17.21 & 79.44 & 3.35 \\ \hhline{~----}
\multicolumn{1}{|c|}{} & 25 & 18.63 & 78.33 & 3.04   \\ \hhline{~----}
\multicolumn{1}{|c|}{} & 50 & 20.32 & 79.29 & 0.39   \\ \hhline{~----}
\multicolumn{1}{|c|}{} & 75 & 23.38 & 76.52 & 0.12   \\ \hhline{~----}
\multicolumn{1}{|c|}{} & 100 & 25.71 & 74.27 & 0.02 \\ \hhline{~----}
\multicolumn{1}{|c|}{} & 150 & 28.45 & 71.55 & 0.01  \\ \hhline{~----}
\multicolumn{1}{|c|}{} & 200 & 30.92 & 69.08 & 0.00 \\ \hhline{~----}
\multicolumn{1}{|c|}{} & 300 & 35.70 & 64.30 & 0.00 \\ \hhline{~----}
\multicolumn{1}{|c|}{} & 400 & 39.82 & 60.18 & 0.00  \\ \hhline{~----}
\multicolumn{1}{|c|}{} & 500 & 42.76 & 57.24 & 0.00  \\ \hhline{~----}
\multicolumn{1}{|c|}{} & 980 & 54.19 & 45.81 & 0.00  \\ \hhline{~----}
\multicolumn{1}{|c|}{} & 990 & 54.39 & 45.81 & 0.00  \\ \hhline{~----}
\multicolumn{1}{|c|}{} & 995 & 54.30 & 45.70 & 0.00  \\ \hhline{~----}
\multicolumn{1}{|c|}{} & 1000 & 90.11 & 9.89 & 0.00  \\ \hhline{~----}
\multicolumn{1}{|c|}{} & 1005 & 54.58 & 45.43 & 0.00  \\ \hhline{~----}
\multicolumn{1}{|c|}{} & 1010 & 54.52 & 45.48 & 0.00  \\ \hhline{~----}
\multicolumn{1}{|c|}{} & 1020 & 90.34 & 9.66 & 0.00  \\ \hhline{~----}
\multicolumn{1}{|c|}{} & 1030 & 54.81 & 45.19 & 0.00  \\ \hhline{~----}
\multicolumn{1}{|c|}{} & 1050 & 55.13 & 44.87 & 0.00  \\ \hhline{~----}
\multicolumn{1}{|c|}{} & 1500 & 59.78 & 40.22 & 0.00  \\ \hhline{~----}
\multicolumn{1}{|c|}{} & 2000 & 62.12 & 37.88 & 0.00  \\ \hhline{~----}
\multicolumn{1}{|c|}{} & 3000 & 63.89 & 36.11 & 0.00  \\ \hhline{~----}
\multicolumn{1}{|c|}{} & 5000 & 64.29 & 35.71 & 0.00  \\ \hline
\end{tabular}}{
\caption{\small{Relative areas of the basins of attraction with $\gamma_0 = 0.2$,
$\gamma_1 = 0.2725$ and $T_0$ varying.}}
\label{TAP0202725}}
\hskip.5truecm
\ffigbox{
\includegraphics[width=0.45\textwidth]{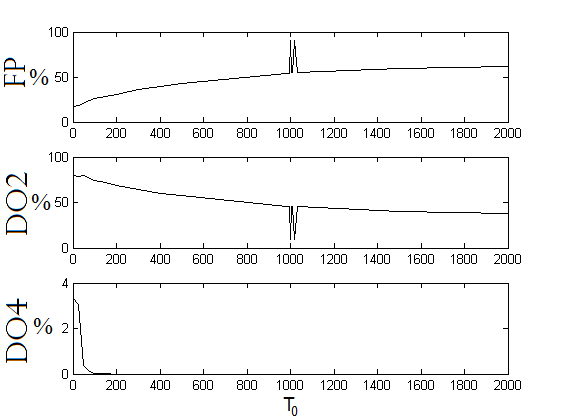}
}{
\caption{\small{Plot of the relative areas of the basins of attraction
as per Table \ref{TAP0202725}.}}
\label{FAP0202725}}
\end{floatrow}
\end{figure}
%%%%%%%%%%%%%%%%%%%%%%%%%%%%%%%%%%%%%%%%%%%%%%%%%%%%%%%%%%%%%%%%

However, the more striking feature of Figures \ref{FAP0202725} and \ref{FAP02302725}
are the jumps corresponding $T_0 = 1000$ in the prior and $T_{0}=100$ and $T_0=500$
in the latter. Moreover such jumps are very localised: for instance in Figure \ref{FAP02302725}
for $T_0=99$ and $T_0=100$ the basins of attraction of FP and DO2 are found to be about $44\%$
and $56\%$, respectively, whereas by slightly increasing or decreasing $T_0$ they settle around
$20\%$ and $80\%$. The quantity of phase space exchanged in these instances is roughly equal
to that attracted to the rotating solutions for $\gamma_0$.  For particular
values of $T_0$ when the rotating attractors disappear their trajectories move to the upwards fixed
point rather than the period 2 oscillations. The reason for this to happen is not clear.
Moreover, note that in principle there could be other jumps, corresponding to
values of $T_{0}$ which have not been investigated: however, it seems hard to make any prediction
as far as it remains unclear how the disappearing basins of attractions are absorbed by the persisting ones.

\vspace{.6cm}

%%%%%%%%%%%%%%%%%%%%%%%%%%%%%%%%%%%%%%%%%%%%%%%%%%%%%%%%%%%%%%%%
\begin{figure}[H]
\begin{floatrow}
\capbtabbox{
\begin{tabular}{cc|c|c|c|c|c|}
\cline{3-5}
& &\multicolumn{3}{c|}{\multirow{1}{*}{Basin of Attraction \%}}\\
\cline{3-5}
& & FP & DO2 & DO4 \\
\hline
\multicolumn{1}{|c|}{\multirow{11}{*}{$T_0$}}
& 0 & 17.21 & 79.44 & 3.35  \\ \hhline{~----}
\multicolumn{1}{|c|}{} & 25 & 17.96 & 79.45 & 2.60   \\ \hhline{~----}
\multicolumn{1}{|c|}{} & 50 & 16.04 & 83.36 & 0.60   \\ \hhline{~----}
\multicolumn{1}{|c|}{} & 75 & 18.39 & 80.27 & 1.34   \\ \hhline{~----}
\multicolumn{1}{|c|}{} & 90 & 19.56 & 80.38 & 0.05 \\ \hhline{~----}
\multicolumn{1}{|c|}{} & 95 & 20.05 & 79.91 & 0.04 \\ \hhline{~----}
\multicolumn{1}{|c|}{} & 97 & 20.08 & 79.89 & 0.04 \\ \hhline{~----}
\multicolumn{1}{|c|}{} & 98 & 20.01 & 79.96 & 0.03 \\ \hhline{~----}
\multicolumn{1}{|c|}{} & 99 & 44.14 & 55.83 & 0.03 \\ \hhline{~----}
\multicolumn{1}{|c|}{} & 100 & 43.96 & 56.01 & 0.03 \\ \hhline{~----}
\multicolumn{1}{|c|}{} & 101 & 20.54 & 79.42 & 0.03 \\ \hhline{~----}
\multicolumn{1}{|c|}{} & 105 & 20.40 & 79.57 & 0.03 \\ \hhline{~----}
\multicolumn{1}{|c|}{} & 110 & 20.79 & 79.19 & 0.02 \\ \hhline{~----}
\multicolumn{1}{|c|}{} & 125 & 21.55 & 78.45 & 0.01 \\ \hhline{~----}
\multicolumn{1}{|c|}{} & 150 & 22.46 & 77.54 & 0.00  \\ \hhline{~----}
\multicolumn{1}{|c|}{} & 200 & 23.33 & 76.67 & 0.00  \\ \hhline{~----}
\multicolumn{1}{|c|}{} & 300 & 24.06 & 75.94 & 0.00 \\ \hhline{~----}
\multicolumn{1}{|c|}{} & 400 & 24.36 & 75.64 & 0.00  \\ \hhline{~----}
\multicolumn{1}{|c|}{} & 490 & 24.54 & 75.46 & 0.00  \\ \hhline{~----}
\multicolumn{1}{|c|}{} & 500 & 49.45 & 50.55 & 0.00  \\ \hhline{~----}
\multicolumn{1}{|c|}{} & 510 & 24.42 & 75.58 & 0.00  \\ \hhline{~----}
\multicolumn{1}{|c|}{} & 1000 & 24.81 & 75.19 & 0.00  \\ \hline
\end{tabular}}{
\caption{\small{Relative areas of the basins of attraction
with $\gamma_0 = 0.23$, $\gamma_1 = 0.2725$ and $T_0$ varying.}}
\label{TAP02302725}}
\hskip.5truecm
\ffigbox{
\includegraphics[width=0.4\textwidth]{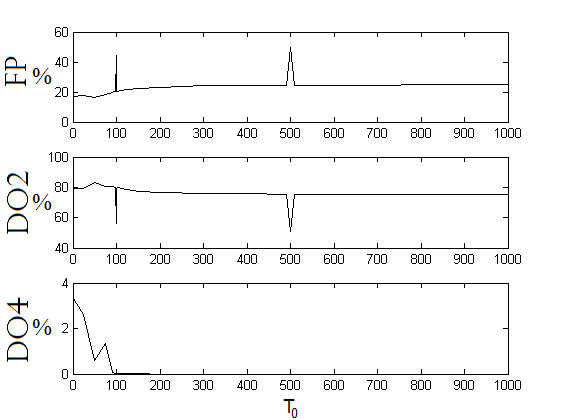}
}{
\caption{\small{Plot of the relative areas of the basins of attraction
as per Table \ref{TAP02302725}.}}
\label{FAP02302725}}
\end{floatrow}
\end{figure}
%%%%%%%%%%%%%%%%%%%%%%%%%%%%%%%%%%%%%%%%%%%%%%%%%%%%%%%%%%%%%%%%
%%%%%%%%%%%%%%%%%%%%%%%%%%%%%%%%%%%%%%%%%%%%%%%%%%%%%%%%%%%%%%%%
\subsection{Decreasing dissipation}
%%%%%%%%%%%%%%%%%%%%%%%%%%%%%%%%%%%%%%%%%%%%%%%%%%%%%%%%%%%%%%%%

Tables \ref{TAP02302}, \ref{TAP0272502}, \ref{TAP02725023} and \ref{TAP0302725}
and the corresponding Figures
\ref{FAP02302}, \ref{FAP0272502}, \ref{FAP02725023} and \ref{FAP0302725}
illustrate the cases when dissipation decreases over an initial period of time $T_0$.
We have considered the cases with $\gamma_0=0.23$, $0.02725$ and $\gamma_1=0.2$, with
$\gamma_0=0.2725$ and $\gamma_1=0.23$ and with $\gamma_0=0.3$ and $\gamma_1=0.2725$.

In particular they show that if the set of attractors
at $\gamma = \gamma_1$ is a proper subset of the set of attractors
which exist at $\gamma = \gamma_0$, then, as $T_0 \to\infty$,
the basin of attraction of each attractor which exists
at $\gamma_1$ turns out to have a relative area which tend to be
greater than or equal to that found for $\gamma = \gamma_0$.
In Table \ref{TAP02302} we consider the situation in which the attractor DO2,
which has a large basin of attraction for $\gamma_0=0.23$, is no longer
present when $\gamma(t)$ has reached the final value $\gamma_1=0.2$:
as a consequence the trajectories which would be attracted by DO2
at $\gamma=\gamma_0$ end up onto the other attractors: in fact most of them
are attracted by the fixed point.

%%%%%%%%%%%%%%%%%%%%%%%%%%%%%%%%%%%%%%%%%%%%%%%%%%%%%%%%%%%%%%%%
\begin{figure}[H]
\begin{floatrow}
\capbtabbox{
\begin{tabular}{cc|c|c|c|c|}
\cline{3-4}
& &\multicolumn{2}{c|}{\multirow{1}{*}{Basin of Attraction \%}}\\
\cline{3-4}
& & FP & PR/NR \\
\hline
\multicolumn{1}{|c|}{\multirow{7}{*}{$T_0$}}
& 0 & 64.31 & 17.84 \\ \hhline{~---}
\multicolumn{1}{|c|}{} & 25 & 70.69 & 14.65 \\ \hhline{~---}
\multicolumn{1}{|c|}{} & 50 & 72.57 & 13.72 \\ \hhline{~---}
\multicolumn{1}{|c|}{} & 75 & 73.24 & 13.38 \\ \hhline{~---}
\multicolumn{1}{|c|}{} & 100 & 73.57 & 13.22 \\ \hhline{~---}
\multicolumn{1}{|c|}{} & 200 & 74.10 & 12.95 \\ \hhline{~---}
\multicolumn{1}{|c|}{} & 500 & 74.42 & 12.79 \\ \hline
\end{tabular}}{
\caption{\small{Relative areas of the basins of attraction with $\gamma_0 = 0.23$,
$\gamma_1 = 0.2$ and $T_0$ varying.}}
\label{TAP02302}}
\hskip.5truecm
\ffigbox{
\includegraphics[width=0.4\textwidth]{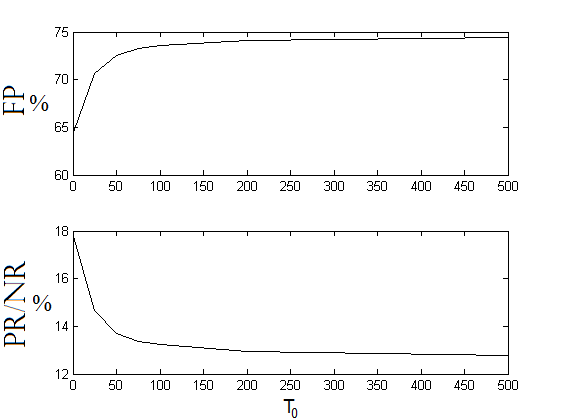}
}{
\caption{\small{Plot of the relative areas of the basins of attraction
as per Table \ref{TAP02302}.}}
\label{FAP02302}}
\end{floatrow}
\end{figure}
%%%%%%%%%%%%%%%%%%%%%%%%%%%%%%%%%%%%%%%%%%%%%%%%%%%%%%%%%%%%%%%%

%%%%%%%%%%%%%%%%%%%%%%%%%%%%%%%%%%%%%%%%%%%%%%%%%%%%%%%%%%%%%%%%
\begin{figure}[H]
\begin{floatrow}
\capbtabbox{
\begin{tabular}{cc|c|c|c|c|}
\cline{3-4}
& &\multicolumn{2}{c|}{\multirow{1}{*}{Basin of Attraction \%}}\\
\cline{3-4}
& & FP & PR/NR \\
\hline
\multicolumn{1}{|c|}{\multirow{13}{*}{$T_0$}}
& 0 & 64.31 & 17.84 \\ \hhline{~---}
\multicolumn{1}{|c|}{} & 5 & 65.80 & 17.10 \\ \hhline{~---}
\multicolumn{1}{|c|}{} & 10 & 69.52 & 15.24 \\ \hhline{~---}
\multicolumn{1}{|c|}{} & 15 & 74.40 & 12.80 \\ \hhline{~---}
\multicolumn{1}{|c|}{} & 20 & 77.90 & 11.05 \\ \hhline{~---}
\multicolumn{1}{|c|}{} & 25 & 80.38 & 9.81 \\ \hhline{~---}
\multicolumn{1}{|c|}{} & 50 & 86.22 & 6.89 \\ \hhline{~---}
\multicolumn{1}{|c|}{} & 75 & 88.64 & 5.68 \\ \hhline{~---}
\multicolumn{1}{|c|}{} & 100 & 90.07 & 4.97 \\ \hhline{~---}
\multicolumn{1}{|c|}{} & 200 & 92.79 & 3.61 \\ \hhline{~---}
\multicolumn{1}{|c|}{} & 500 & 95.92 & 2.04 \\ \hhline{~---}
\multicolumn{1}{|c|}{} & 1000 & 99.34 & 0.33 \\ \hhline{~---}
\multicolumn{1}{|c|}{} & 1500 & 99.35 & 0.32 \\ \hline
\end{tabular}}{
\caption{\small{Relative areas of the basins of attraction
with $\gamma_0 = 0.2725$, $\gamma_1 = 0.2$ and $T_0$ varying.}}
\label{TAP0272502}}
\hskip.5truecm
\ffigbox{
\includegraphics[width=0.4\textwidth]{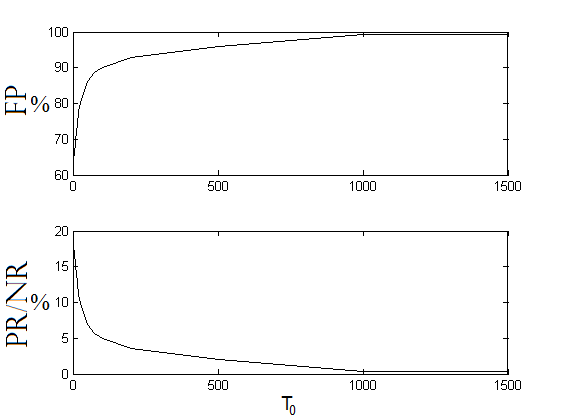}
}{
\caption{\small{Plot of the relative areas of the basins of attraction
as per Table \ref{TAP0272502}.}}
\label{FAP0272502}}
\end{floatrow}
\end{figure}
%%%%%%%%%%%%%%%%%%%%%%%%%%%%%%%%%%%%%%%%%%%%%%%%%%%%%%%%%%%%%%%%

%%%%%%%%%%%%%%%%%%%%%%%%%%%%%%%%%%%%%%%%%%%%%%%%%%%%%%%%%%%%%%%%
\begin{figure}[H]
\begin{floatrow}
\capbtabbox{
\begin{tabular}{cc|c|c|c|c|}
\cline{3-5}
& &\multicolumn{3}{c|}{\multirow{1}{*}{Basin of Attraction \%}}\\
\cline{3-5}
& & FP & PR/NR & DO2 \\
\hline
\multicolumn{1}{|c|}{\multirow{7}{*}{$T_0$}}
& 0 & 25.00 & 12.68 & 49.61 \\ \hhline{~----}
\multicolumn{1}{|c|}{} & 25 & 24.73 & 7.44 & 60.40 \\ \hhline{~----}
\multicolumn{1}{|c|}{} & 50 & 19.67 & 5.35 & 69.63 \\ \hhline{~----}
\multicolumn{1}{|c|}{} & 75 & 17.13 & 4.44 & 73.99 \\ \hhline{~----}
\multicolumn{1}{|c|}{} & 100 & 16.42 & 3.88 & 75.83 \\ \hhline{~----}
\multicolumn{1}{|c|}{} & 200 & 16.29 & 2.72 & 78.28 \\ \hhline{~----}
\multicolumn{1}{|c|}{} & 500 & 17.03 & 0.76 & 81.45 \\ \hline
\end{tabular}}{
\caption{\small{Relative areas of the basins of attraction
with $\gamma_0 = 0.2725$, $\gamma_1 = 0.23$ and $T_0$ varying.}}
\label{TAP02725023}}
\hskip.5truecm
\ffigbox{
\includegraphics[width=0.4\textwidth]{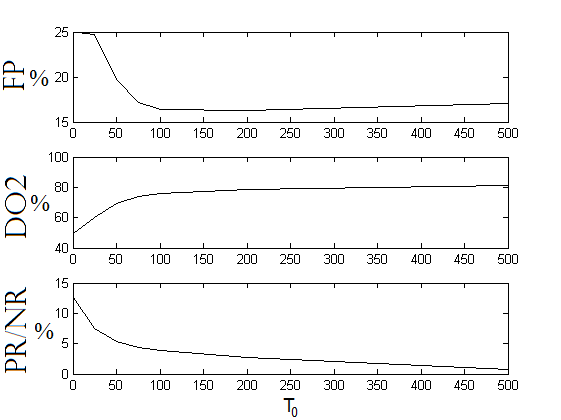}
}{
\caption{\small{Plot of the relative areas of the basins of attraction
as per Table \ref{TAP02725023}.}}
\label{FAP02725023}}
\end{floatrow}
\end{figure}
%%%%%%%%%%%%%%%%%%%%%%%%%%%%%%%%%%%%%%%%%%%%%%%%%%%%%%%%%%%%%%%%

We also notice the interesting features in Table \ref{TAP0272502}:
as the fixed point is the only attractor which exists
for both $\gamma_0$ and $\gamma_1$, we find that as $T_0$
increases its basin of attraction tends towards 100\%,
which corresponds to attraction of the entire phase space,
up to a zero-measure set.
This happens despite the fact that $\gamma(t)$ does not pass through any value
for which global attraction to the fixed point is satisfied.
It also suggests that it is possible to provide conditions
on the intersection of the two sets $\mathcal{A}_0$ and $\mathcal{A}_1$
of the attractors corresponding to $\gamma_0$ and $\gamma_1$, respectively,
in order to obtain that all trajectories move towards the same attractor
when the time $T_0$ over which $\gamma(t)$ is varied is sufficiently large.
In particular, it is remarkable that it is possible to create an attractor for
almost all trajectories by suitably tuning the damping coefficient as a function of time.

In Table \ref{TAP02725023},
the relative areas of the rotating solutions,
which are absent at $\gamma=\gamma_0$, tend to become negligible
when $T_0$ is large. Similarly, in Table \ref{TAP0302725},
the basin of attraction of the period 4 oscillating attractor,
which exists only for the final value $\gamma_1$ of the damping coefficient,
tends to disappear when $T_0$ is taken large enough.
This confirms the general expectation:
the basin of attraction of the disappearing attractor is absorbed by the
closer attractor, that is the solution DO2 in this case.

%%%%%%%%%%%%%%%%%%%%%%%%%%%%%%%%%%%%%%%%%%%%%%%%%%%%%%%%%%%%%%%%
\begin{figure}[H]
\begin{floatrow}
\capbtabbox{
\begin{tabular}{cc|c|c|c|c|}
\cline{3-5}
& &\multicolumn{3}{c|}{\multirow{1}{*}{Basin of Attraction \%}}\\
\cline{3-5}
& & FP & DO2 & DO4 \\
\hline
\multicolumn{1}{|c|}{\multirow{7}{*}{$T_0$}}
& 0 & 17.21 & 79.44 & 3.35 \\ \hhline{~----}
\multicolumn{1}{|c|}{} & 25 & 17.15 & 80.99 & 1.86 \\ \hhline{~----}
\multicolumn{1}{|c|}{} & 50 & 16.83 & 83.03 & 0.14 \\ \hhline{~----}
\multicolumn{1}{|c|}{} & 75 & 16.75 & 83.24 & 0.01 \\ \hhline{~----}
\multicolumn{1}{|c|}{} & 100 & 16.79 & 83.21 & 0.00 \\ \hhline{~----}
\multicolumn{1}{|c|}{} & 200 & 16.81 & 83.19 & 0.00 \\ \hhline{~----}
\multicolumn{1}{|c|}{} & 500 & 16.80 & 83.20 & 0.00 \\ \hline
\end{tabular}}{
\caption{\small{Relative areas of the basins of attraction with $\gamma_0 = 0.3$,
$\gamma_1 = 0.2725$ and $T_0$ varying.}}
\label{TAP0302725}}
\hskip.5truecm
\ffigbox{
\includegraphics[width=0.4\textwidth]{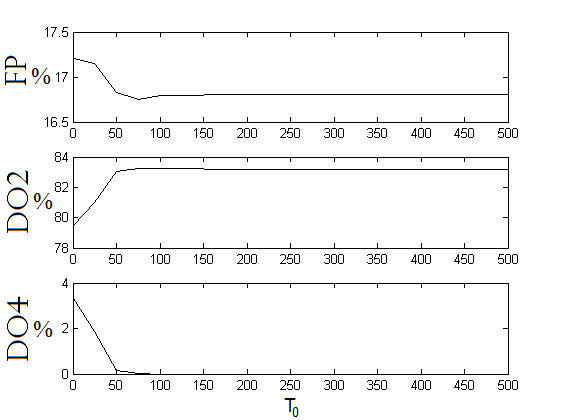}
}{
\caption{\small{Plot of the relative areas of the basins of attraction
as per Table \ref{TAP0302725}.}}
\label{FAP0302725}}
\end{floatrow}
\end{figure}
%%%%%%%%%%%%%%%%%%%%%%%%%%%%%%%%%%%%%%%%%%%%%%%%%%%%%%%%%%%%%%%%

%%%%%%%%%%%%%%%%%%%%%%%%%%%%%%%%%%%%%%%%%%%%%%%%%%%%%%%%%%%%%%%%
\section{Numerical Methods}
\label{NumericsSection}
%%%%%%%%%%%%%%%%%%%%%%%%%%%%%%%%%%%%%%%%%%%%%%%%%%%%%%%%%%%%%%%%

The two main numerical methods implemented for the simulations throughout were
a variable order Adams-Bashforth-Moulton method and the method of analytic
continuation \cite{KBrown,AnalyticContBook1,ProbAnalCont,QuantAnalCont,AnalyticContBook2,AnalyticContPen};
the latter consists in a numerical implementation of the Frobenius method.
Also used to check the results was a Runge-Kutta method. The Adams-Bashforth-Moulton
integration scheme used is the built in integrator found in {\sc matlab},
ODE113, whereas the programs based on the method of analytic continuation and Runge-Kutta scheme were written in C.
Of the three methods, the slowest was the Adams-Bashforth-Moulton method,
however it was found that the method worked well for the system
with the chosen parameter values and the results produced were reliable.

Both the Adams-Bashforth-Moulton and Runge-Kutta
methods are standard methods for solving ODEs of this type.
When implementing these two integrators to calculate the basins of attraction,
two different methods for both choosing initial conditions in phase space
and classifying attractors were used.
The first method for picking initial conditions was to take a mesh of equally
spaced points in the phase space: this method ensures uniform coverage
of the phase space. When taking this approach a mesh of
either 321 141 or 503 289 points was used depending on the desired accuracy.
The second method was to take random initial conditions: this can be done
by choosing initial conditions from a stream of random points,
which allows the user to use the same random initial conditions in each simulation if required.
This method is often preferred as the accuracy of the estimates of the relative areas
of the basins of attraction compared to the number of initial conditions used
can be calculated \cite{ErrorStats}; also it is easier to run additional simulations
for extra random points to improve estimates later on, when needed.
When using random points, the number of points used to calculate the basins of attraction
was 300 000 or 400 000 depending on the expected complexity of the system
under given parameters. In some cases where extra accuracy was required
due to some attractors having particularly
small basins of attraction, additional 200 000 or 300 000 random points were used.
This allowed us to obtain an error less than
0.20 on the relative areas of the basins of attraction.
In the most delicate cases, where more precise estimates were needed to distinguish between
values very close to each other (for instance in Table \ref{table-cubic}), the error
was made smaller by increasing the number of points. We decided to
express in all cases the relative areas up to the second decimal digit because often further
increasing the number of points did not alter appreciably that digit.

To detect and classify solutions, two methods can be used.
The first method consists in finding all attractors as a first step,
before computing the corresponding basins of attraction:
this required a complete characterisation of both the period and
the location in phase space of the attractors.
In principle, this works very well, but has the downside of
having to find initially all the attractors, and differentiate between
those that occupy the same region in phase space.
The second method for classifying the solutions was to create a library of solutions.
This was created as the program ran and built up as new solutions were found.
The solution of each integration was then checked against the library and,
if not already known, was added. In this way, the program finds solutions as it goes,
so has the advantage of the user not having to know the existing solutions
in the system prior to calculating the basins of attraction.

The method of analytic continuation was implemented and produced results
very similar to those of the Adams-Bashforth-Moulton method,
but in general was much quicker to run.
When implementing this method of integration, we only used random initial conditions in
the phase space and the library method for classifying solutions. The reason for using
the Adams-Bashforth-Moulton method, despite being the slowest of the three integrators,
was for comparison with the method of analytic continuation. Analytic continuation has not
previously been used for numerically integrating an ODE of the form \eqref{PenEqn},
that is an equation with an infinite polynomial nonlinearity that satisfies
an addition formula, thus it was good to have
a reliable method to check results with. The method for using
analytic continuation for integrating ODE's that have infinite polynomial nonlinearity that
satisfy an addition formula will be described more extensively in \cite{AnalyticContPen}. The similarity in results
of these completely different methods for integration, initial condition selection and solution
classification provides reassurance and confidence that the results produced are accurate.

%%%%%%%%%%%%%%%%%%%%%%%%%%%%%%%%%%%%%%%%%%%%%%%%%%%%%%%%%%%%%%%%
\section{Concluding remarks}
\label{Conclusions}
%%%%%%%%%%%%%%%%%%%%%%%%%%%%%%%%%%%%%%%%%%%%%%%%%%%%%%%%%%%%%%%%

In this work we have numerically shown the importance of not only
the final value of dissipation but its entire time evolution,
for understanding the long time behaviour of the pendulum with oscillating support.
This extends the work done in \cite{CP} to a system which, even for values
of the parameters in the perturbation regime, exhibits richer and more varied dynamics,
due to the presence of the separatrix in the phase space of the unperturbed system.
In addition we have considered also values of the parameters beyond the perturbation regime
(the inverted pendulum), where the system cannot be considered a perturbation
of an integrable one. In particular this results in a more complicated scenario,
with bifurcation phenomena and the appearance of attractors which exist only for values
of the damping coefficient $\gamma$ in finite intervals away from zero,
say $\gamma\in [\gamma_1 , \gamma_2]$, with $\gamma_1 > 0$.

We have preliminarily studied the behaviour of the system in the case of constant
dissipation. Firstly, in the perturbation regime, we have analytically computed to first order
the threshold values below which the periodic attractors exist.
We have also discussed why this approach fails due to intrinsic perturbation theory
limitations, in particular why the method cannot be applied to stable cases of
the upwards configuration or to solutions too close to the unperturbed separatrix.
Next, we have studied numerically the dependence of the sizes of the
basins of attraction on the damping coefficient.

Then we have explicitly considered the case of damping coefficient varying monotonically
between two values and outlined a few expectations for the way in which
the basins of attraction accordingly change with respect to the
case of constant dissipation. These expectations were later illustrated
and backed up with numerical simulations: in particular the relevance of the
study of the dynamics at constant dissipation was argued at length.
While the expectations account for many features observed numerically,
there are still some facts which are difficult to explain, even at an heuristic level,
and which would deserve further investigation, such as the relationship
between the fixed point solution and the oscillating attractors, to better understand
why in some cases they exchange large areas of their basins of attraction
when the damping coefficient varies in time.
More generally, an in-depth numerical study of the system with constant dissipation,
also for other parameter values, would be worthwhile. In particular it would be interesting
to perform a more detailed bifurcation analysis with respect to the parameter $\gamma$ and
to study the system for very small values of the damping coefficient
$\gamma$ (which relates to the spin-orbit model in celestial mechanics),
both in the perturbation regime and for large values of the forcing amplitude.
We think that, in order to study cases with very small dissipation, the method of analytical
continuation briefly described in Section \ref{NumericsSection} could be particularly fruitful.

\vspace{.3truecm}

Some interesting features appeared in our analysis which would deserve further consideration are:
\begin{itemize}
\item the increase in the basin of attraction of the fixed point observed
in Figure \ref{FAPCFD} when the damping coefficient becomes small enough;
\item the appearance of the period 4 solution for a thin interval of values
of the damping coefficient, as emerges in Table \ref{TAPCF};
\item the rate at which the values of the relative areas of the basins attraction corresponding to
the initial value $\gamma_0$ of $\gamma(t)$ are approached as the variation time $T_0$ increases;
\item the way in which the basins of attraction of the disappearing attractors
distribute among the persisting ones;
\item the oscillations through which the relative areas of the basins of attraction
approach the asymptotic value when taking larger and larger values of the
variation time $T_0$, as observed for instance in Tables
\ref{TAP0202725} and \ref{TAP02302725};
\item the jumps corresponding to $T_0 = 1000$ in Figure \ref{FAP0202725} and $T_0=100$,
$T_0=500$ in Figure \ref{FAP02302725};
\item the computation of the threshold values to second order, so as to include the
rotating solutions found numerically in the perturbation regime investigated in Section \ref{PenNumericSect}.
\end{itemize}

Finally, investigating analogous systems such as the pendulum
with periodically varying length to see if similar dynamics occur would also
be fascinating in its own right. Another interesting model to investigate further,
especially in the case of very small dissipation, is the spin-orbit model
already considered in \cite{CP}, which is expected to be of relevance to
understand the locking into the resonance $3\;$:$\,2$ of the system Mercury-Sun.

%%%%%%%%%%%%%%%%%%%%%%%%%%%%%%%%%%%%%%%%%%%%%%%%%%%%%%%%%%%%%%%%
\section*{Acknowledgements}
%%%%%%%%%%%%%%%%%%%%%%%%%%%%%%%%%%%%%%%%%%%%%%%%%%%%%%%%%%%%%%%%

The Adams-Bashforth-Moulton method used
was {\sc MATLAB}'s  ODE113.
We thank Jonathan Deane for helpful conversations on analytic continuation
and support with coding in C.
This research was completed as part of an EPSRC funded PhD.

%%%%%%%%%%%%%%%%%%%%%%%%%%%%%%%%%%%%%%%%%%%%%%%%%%%%%%%%%%%%%%%%
\appendix
%%%%%%%%%%%%%%%%%%%%%%%%%%%%%%%%%%%%%%%%%%%%%%%%%%%%%%%%%%%%%%%%

%%%%%%%%%%%%%%%%%%%%%%%%%%%%%%%%%%%%%%%%%%%%%%%%%%%%%%%%%%%%%%%%
\section{Global attraction to the two fixed points}
\label{LinearisedApp}
%%%%%%%%%%%%%%%%%%%%%%%%%%%%%%%%%%%%%%%%%%%%%%%%%%%%%%%%%%%%%%%%
To compute the conditions for attraction to the origin we use
the method outlined in \cite{GAtO}; see also \cite{BDG}. We define $f(\tau)$
as in \eqref{PenEqnTau} and require $f(\tau) > 0$: the consequences of this restriction
are that the method can only be applied to the downwards pointing pendulum
when $\alpha > \beta$. Then we apply the Liouville transformation
\begin{equation}
\tilde{\tau} = \int_0^\tau \sqrt{f(s)}\der s
\end{equation}
and write our equation \eqref{PenEqnTau} in terms of the new time $\tilde{\tau}$ as
\begin{equation}
\theta_{\tilde{\tau}\tilde{\tau}} + \left(\frac{\tilde{f}(\tilde{\tau})_{\tilde{\tau}}}{2\tilde{f}(\tilde{\tau})} +
\frac{\gamma}{\sqrt{\tilde{f}(\tilde{\tau})}}\right)\theta_{\tilde{\tau}} + \sin\theta = 0,
\end{equation}
where the subscript $\tilde{\tau}$ represents derivative with respect to the
new time $\tilde{\tau}$ and $\tilde{f}(\tilde{\tau}) := f(\tau)$.
This can be represented as the two-dimensional system on $\mathbb{T}\times\mathbb{R}$,
by setting $x(\tilde{\tau})=\theta(\tilde{\tau})$ and writing
\begin{equation}
x_{\tilde{\tau}} = y, \qquad
y_{\tilde{\tau}} = -\frac{y}{\sqrt{\tilde{f}}}
\left(\frac{\tilde{f}_{\tilde{\tau}}}{2\sqrt{\tilde{f}}} + \gamma\right) - \sin{x},
\end{equation}
for which we have the energy $E(x,y) = 1 -\cos{x} + y^2/2$. By setting $H(\tilde\tau)=E(x(\tilde\tau),y(\tilde\tau))$, one finds
\begin{equation}
H_{\tilde{\tau}} = -\frac{y^2}{\sqrt{\tilde{f}}}\left(\frac{\tilde{f}_{\tilde{\tau}}}{2\sqrt{\tilde{f}}}
+ \gamma\right),
\label{AppendixDecreasingEnergy}
\end{equation}
thus $H_{\tilde{\tau}} \le 0$, i.e $x$, $y$ are bounded given that $\gamma$ satisfies
\begin{equation} \label{largegamma}
\gamma > - \min_{\tilde{\tau} \ge 0}
\frac{\tilde{f}_{\tilde{\tau}}}{2\sqrt{\tilde{f}}} = - \min_{\tau \ge 0} \frac{f'}{2f}.
\end{equation}
Moreover we have that for all $\tilde{\tau} > 0$
\begin{equation}
H(\tilde{\tau}) + \int_0^{\tilde{\tau}} \frac{y^2}{\sqrt{\tilde{f}}}
\left(\frac{\tilde{f}'}{2\sqrt{\tilde{f}}} + \gamma\right) \der s = H(0),
\end{equation}
so that, as $\tilde{\tau} \rightarrow \infty$, using the properties above we can arrive at
\begin{equation}
\min_{s \ge 0} \left[ \frac{1}{\sqrt{\tilde{f}}}\left(
\frac{\tilde{f}_{\tilde{\tau}}}{2\sqrt{\tilde{f}}}
+ \gamma\right)\right]\int_0^\infty y^2(s) \der s < \infty.
\end{equation}
Hence $y \rightarrow 0$ as time tends to infinity. There are two regions of phase space to consider.
Any level curve of $H$ strictly inside the separatrix of the unperturbed pendulum is the boundary of
a positively invariant set $D$ containing the origin: since
$S = \{(x(\tilde\tau),y(\tilde\tau)) : H_{\tilde\tau} = 0\} \cup D$
consists purely of the origin, we can apply the local Barbashin-Krasovsky-La Salle
theorem \cite{Kr} to conclude that every trajectory that begins strictly inside
the separatrix will converge to the origin as $\tilde\tau \rightarrow +\infty$.

Outside of the separatrix we may use equation \eqref{AppendixDecreasingEnergy}, which shows the energy
to be strictly decreasing while $y \neq 0$, provided $\gamma$ is chosen large enough, coupled with
$y \rightarrow 0$ as time tends to infinity. The result is that all
trajectories tend to the invariant points on the $x$-axis as time tends to infinity. One of two cases
must occur: either the trajectory moves inside the separatrix or it does not. In the first instance we
have already shown that the limiting solution is the origin. In the latter there is only one possibility.
As all points on the $x$-axis are contained within the separatrix other than the unstable
fixed point, the trajectory must move onto such a fixed point and hence belongs to its stable manifold,
which is a zero-measure set.
Therefore we conclude that a full measure set of initial conditions are attracted by the origin.
Reverting back to the original system with time $\tau$, we conclude that for that system too
the basin of attraction of the origin has full measure,
provided $\beta<\alpha$ and $\gamma$ satisfies \eqref{largegamma}.

%%%%%%%%%%%%%%%%%%%%%%%%%%%%%%%%%%%%%%%%%%%%%%%%%%%%%%%%%%%%%%%%
\section{Action-angle variables}
\label{ActionAngleApp}
%%%%%%%%%%%%%%%%%%%%%%%%%%%%%%%%%%%%%%%%%%%%%%%%%%%%%%%%%%%%%%%%

In this section we detail the calculation of the action-angle variables for the simple pendulum in time $\tau$.
More details on calculating action-angle variables can be found in \cite{AnalMech,EngAAV,Brizard}.
The simple pendulum has equation of motion given by
\begin{equation}
\theta''  + \alpha\sin{\theta} = 0,
\end{equation}
where the dashes represent derivative with respect to the scaled time $\tau$.
The Hamiltonian for the simple pendulum in this notation is
\begin{equation}\label{Hamil_Theta}
E = H(\theta,\theta') = \frac{1}{2} (\theta')^2 - \alpha \cos{\theta}
\end{equation}
or, in terms of the usual notation for Hamiltonian dynamics,
\begin{equation}\label{SimplePenpqHamil}
E = H(p,q) = \frac{1}{2} p^2 - \alpha \cos{q},
\end{equation}
where $q = \theta$ and $p= q'=\theta'$. Rearranging this for $p$ we obtain
$p=\pm p(E,q)$, with
\begin{equation}
p(E,q) = \sqrt{2(E + \alpha \cos{q})} = \sqrt{2\alpha (E_{0} + \cos{q})},
\end{equation}
where $E_{0} = E/\alpha$. It is clear that there are two types of dynamics,
oscillatory dynamics when $E_{0} < 1$ and rotational dynamics when $E_{0} > 1$,
separated at a separatrix when $E_{0} = 1$, for which no action-angle variables exist.

%%%%%%%%%%%%%%%%%%%%%%%%%%%%%%%%%%%%%%%%%%%%%%%%%%%%%%%%%%%%%%%%
\subsection{Librations}
%%%%%%%%%%%%%%%%%%%%%%%%%%%%%%%%%%%%%%%%%%%%%%%%%%%%%%%%%%%%%%%%

We first consider the case $E_{0} < 1$. The action variable is
\begin{equation} \label{LibAct}
\begin{split}
I = \frac{1}{2\pi} \oint p \der q
= \frac{2}{\pi}\sqrt{2\alpha} \int_{0}^{q_1} \sqrt{E_{0} + \cos{q}} \thickspace \der q
= \frac{8}{\pi}\sqrt{\alpha}\Bigl[(k_1^2 - 1){\bf K}(k_1) + {\bf E}(k_1)\Bigr],
\end{split}
\end{equation}
where $k_1^2 = (E_{0}+1)/2$ and $q_{1}=\arccos(-E_{0})$. The functions ${\bf K}(k)$ and ${\bf E}(k)$
are the complete elliptic integrals of the first and second kinds respectively.

The angle variable $\varphi$ can be found as follows
\begin{equation}\label{dotPhiEqn}
\varphi' = \frac{\partial H}{\partial I } = \frac{\der E}{\der I} =
\left(\frac{\der I}{\der E}\right)^{-1},
\end{equation}
so that
\begin{equation}
\frac{\der I}{\der E} = \frac{\der }{\der E} \frac{2}{\pi}
\int_{0}^{q_1} \sqrt{E + \alpha \cos{q}} \thickspace \der q \\
= \frac{2}{\pi \sqrt{\alpha}} {\bf K}(k_1).
\end{equation}
Hence we have
\begin{equation} \label{varphi-osc}
\varphi (\tau) = \frac{\pi}{2{\bf K}(k_1)} \sqrt{\alpha}(\tau - \tau_0).
\end{equation}
Take $s = \sin{(q/2)}$; then using equation \eqref{Hamil_Theta} it is easy to show that
\begin{equation}\label{sdotsq}
(s')^2 = \frac{g}{l}(1-s^2)\left(k_1^2 - s^2\right).
\end{equation}
Integrating using the Jacobi elliptic functions
\begin{equation}
s(\tau) = k_1 \sn{\left(\sqrt{\frac{g}{l}}(\tau - \tau_0),k_1\right)},
\end{equation}
the expresssion can then be rearranged to achieve the following result:
\begin{equation}\begin{split}
q = 2\arcsin{\left[k_1 \sn{\left(\frac{2{\bf K}(k_1)}{\pi}\varphi,k_1\right)}\right]}, \qquad
p = 2 k_1 \sqrt{\alpha} \cn{\left(\frac{2{\bf K}(k_1)}{\pi}\varphi,k_1\right)},
\end{split}
\end{equation}
which coincide with equations \eqref{action-angle-librations}. By using \eqref{EKDer}
in Appendix \ref{AppUsefulIntsAndExps}, one obtains from \eqref{LibAct}
\begin{equation} \label{LibDIDK}
\frac{\partial I}{\partial k_{1}} = \frac{8}{\pi} k_{1} {\bf K}(k_{1}) \, \sqrt{\alpha} ,
\end{equation}
a relation which has been used to derive  \eqref{OmegaDer}.

%%%%%%%%%%%%%%%%%%%%%%%%%%%%%%%%%%%%%%%%%%%%%%%%%%%%%%%%%%%%%%%%
\subsection{Rotations}
%%%%%%%%%%%%%%%%%%%%%%%%%%%%%%%%%%%%%%%%%%%%%%%%%%%%%%%%%%%%%%%%

In the case of rotational dynamics we have
\begin{equation} \label{RotAct}
\begin{split}
I = \frac{1}{2\pi} \int_{0}^{2\pi} p \, \der q
= \frac{1}{2\pi}\sqrt{\alpha} \int_{0}^{2\pi} \sqrt{E_{0} + \cos{q}} \thickspace \der q
= \frac{4}{k_2\pi}\sqrt{\alpha} \, {\bf E}(k_2),
\end{split}
\end{equation}
where this time we let $k_2^2 = 2/(E_{0} + 1) = 1/k_1^2$. The angle variable $\varphi$
can similarly be found using \eqref{dotPhiEqn}, where
$\der I/\der E$ can be similarly calculated as
\begin{equation}
\frac{\der I}{\der E} = \frac{\der }{\der E} \frac{1}{2\pi}
\int_0^{2\pi} \sqrt{E +\alpha \cos{q}} \thickspace \der q \\
= \frac{k_2}{\pi \sqrt{\alpha}} {\bf K}(k_2),
\end{equation}
which hence gives
\begin{equation} \label{varphi-rot}
\varphi(\tau)  = \frac{\pi}{{\bf K}(k_2)}\sqrt{\alpha}\frac{(\tau - \tau_0)}{k_2}.
\end{equation}
Using \eqref{sdotsq} and the definition of $k_2$, for the rotating solutions we find that
\begin{equation}
s(\tau) = \sn{\left(\sqrt{\alpha} \frac{(\tau - \tau_0)}{k_2},k_2\right)} ,
\end{equation}
and similarly, by simple rearrangement, we find that
\begin{equation}\begin{split}
q = 2 \arcsin{\left[ \sn{\left(\frac{{\bf K}(k_2)}{\pi}\varphi,k_2\right)}\right]}, \qquad
p = \frac{2}{k_2} \sqrt{\alpha} \, \dn{\left(\frac{{\bf K}(k_2)}{\pi}\varphi,k_2\right)},
\end{split}
\end{equation}
which again yields equations \eqref{action-angle-rotations}
By using \eqref{EKDer} in Appendix \ref{AppUsefulIntsAndExps},
one obtains from \eqref{RotAct}
\begin{equation} \label{RotDIDK}
\frac{\partial I}{\partial k_{2}} = - \frac{4}{\pi k_{2}^{2}} {\bf K}(k_{2}) \,  \sqrt{\alpha} .
\end{equation}
which has been used to derive  \eqref{OmegaDerRot}.

%%%%%%%%%%%%%%%%%%%%%%%%%%%%%%%%%%%%%%%%%%%%%%%%%%%%%%%%%%%%%%%%
\section{Jacobian determinant}
\label{Jacobian}
%%%%%%%%%%%%%%%%%%%%%%%%%%%%%%%%%%%%%%%%%%%%%%%%%%%%%%%%%%%%%%%%

Here we compute the entries of the Jacobian matrix $J$ of the transformation to action-angle variables,
which will be used in the next Appendix. As a by-product
we check that $J$ determinant equal to 1, that is
\begin{equation}\label{JacobiEqn}
\frac{\partial q}{\partial \varphi}\frac{\partial p}{\partial I} - \frac{\partial q}{\partial I}\frac{\partial p}{\partial \varphi}  = 1.
\end{equation}
For further details on the proof of \eqref{JacobiEqn} we refer the reader to \cite{Brizard},
where the calculations are given in great detail.
The derivative with respect to $\varphi$ is straightforward in both the libration and rotation case,
however the dependence of $p$ and $q$ on the action $I$ is less obvious. That said, the dependence of $p$
and $q$ on $k_1$ in the oscillating case and $k_2$ in the rotating case is clear and we know the relationship
between $I$ and $k$ in both cases, hence the derivative of the
Jacobi elliptic functions can be calculated by using that
\begin{equation}\label{DwrtIEqn}
\frac{\partial}{\partial I} = \frac{\partial k}{\partial I}\frac{\partial}{\partial k} +
\frac{\partial u}{\partial I}\frac{\partial}{\partial u} = \frac{\partial k}{\partial I}\left(\frac{\partial}{\partial k} +
\frac{\partial u}{\partial k}\frac{\partial}{\partial u}\right),
\end{equation}
where $u$ is the first argument of the functions, i.e $\sn(u,k)$, etc.
Then for the oscillations we have
\begin{equation}\begin{split}
\frac{\partial q}{\partial I} & = \frac{\pi}{4k_1{\bf K}(k_1)\sqrt{\alpha}}\left[\frac{\sn(\cdot)}{\dn(\cdot)} +
\frac{2{\bf E}(k_1)\varphi\cn(\cdot)}{\pi k_{1}'^2} + \frac{k_1^2\sn(\cdot)\cn^2(\cdot)}{k_1'^2\dn(\cdot)} -
\frac{{\bf E}(\cdot)\cn(\cdot)}{k_1'^2} \right], \\
\frac{\partial p}{\partial I} & = \frac{\pi}{4k_1{\bf K}(k_1)}\left[\cn(\cdot) -
\frac{2{\bf E}(k_1)\varphi\sn(\cdot)\dn(\cdot)}{\pi k_{1}'^2} -
\frac{k_1^2\sn^2(\cdot)\cn(\cdot)}{k_1'^2} +
\frac{{\bf E}(\cdot)\sn(\cdot)\dn(\cdot)}{k_1'^2}\right],\\
\frac{\partial q}{\partial \varphi} & = \frac{4k_1{\bf K}(k_1)\cn(\cdot)}{\pi} ,\\
\frac{\partial p}{\partial \varphi} & = -\sqrt{\alpha} \, \frac{4k_1{\bf K}(k_1)\sn(\cdot)\dn(\cdot)}{\pi},
\end{split}
\end{equation}
where $(\cdot) = \left(\frac{2{\bf K}(k_1)\varphi}{\pi},k_1\right)$ and $k_1' = \sqrt{1 - k_1^2}$.
From the above it is easy to check that equation \eqref{JacobiEqn} is satisfied.
Similarly, for the rotations we have
\begin{equation}\begin{split}
\frac{\partial q}{\partial I} & = - \frac{\pi k_2^2}{2 {\bf K}(k_2) \, \sqrt{\alpha}}
\left[ \frac{\varphi \, {\bf E}(k_{2})\,\dn(\cdot)}{\pi k_{2}k_{2}'^2} +
\frac{k_2\sn(\cdot)\cn(\cdot)}{k_2'^2} - \frac{{\bf E}(\cdot)\dn(\cdot)}{k_2k_2'^2}\right], \\
\frac{\partial p}{\partial I} & = \frac{\pi k_2^2}{2 {\bf K}(k_2)}
\left[ \frac{\dn(\cdot)}{k_2^2} + \frac{\varphi \, {\bf E}(k_{2})\,\sn(\cdot) \, \cn(\cdot)}{\pi k_{2}'^2} +
\frac{\sn^2(\cdot)\dn(\cdot)}{k_2'^2} - \frac{{\bf E}(\cdot)\sn(\cdot)\cn(\cdot)}{k_2'^2} \right], \\
\frac{\partial q}{\partial \varphi} & = \frac{2{\bf K}(k_2)\dn(\cdot)}{\pi},\\
\frac{\partial p}{\partial \varphi} & = -\sqrt{\alpha} \, \frac{2k_2{\bf K}(k_2)\sn(\cdot)\cn(\cdot)}{\pi} ,
\end{split}
\end{equation}
where $(\cdot) = \left(\frac{{\bf K}(k_2)}{\pi}\varphi,k_2\right)$ and $k_2' = \sqrt{1 - k_2^2}$.
It is once again easily checked from the above that \eqref{JacobiEqn} is satisfied.

%%%%%%%%%%%%%%%%%%%%%%%%%%%%%%%%%%%%%%%%%%%%%%%%%%%%%%%%%%%%%%%%
\section{Equations of motion for the perturbed system}
\label{AppPerturbedSys}
%%%%%%%%%%%%%%%%%%%%%%%%%%%%%%%%%%%%%%%%%%%%%%%%%%%%%%%%%%%%%%%%

By \eqref{JacobiEqn} one has
\begin{equation}
\begin{pmatrix} \partial \varphi/\partial q & \partial \varphi / \partial p \\
\partial I /\partial q & \partial I / \partial p
\end{pmatrix}
=
\begin{pmatrix} \partial p / \partial I & - \partial q / \partial I \\
- \partial p/ \partial \varphi & \partial q / \partial \varphi .
\end{pmatrix}
\end{equation}
We rewrite the equation \eqref{PenEqnTau} in the action-angle coordinates
introduced in Appendix \ref{ActionAngleApp} as follows.

%%%%%%%%%%%%%%%%%%%%%%%%%%%%%%%%%%%%%%%%%%%%%%%%%%%%%%%%%%%%%%%%
\subsection{Librations}
%%%%%%%%%%%%%%%%%%%%%%%%%%%%%%%%%%%%%%%%%%%%%%%%%%%%%%%%%%%%%%%%

In this section we want to write \eqref{PenEqnTau} in terms of the action-angle introduced in
Appendix \ref{ActionAngleApp}.
By taking into account the forcing term $-\beta \cos\tau \cos\theta$ in \eqref{PenEqnTau} one finds
\begin{equation}
\begin{split}
I' & = \frac{\partial I}{\partial q}q' + \frac{\partial I}{\partial p}p' = -
\frac{\partial p}{\partial \varphi}q' + \frac{\partial q}{\partial \varphi}p' \\
& = \frac{8 \beta k_1^2 {\bf K}(k_1)}{\pi}\cos(\tau - \tau_0)\sn(\cdot)\cn(\cdot)\dn(\cdot), \\
\varphi' & = \frac{\partial \varphi}{\partial q}q' + \frac{\partial \varphi}{\partial p}p' =
\frac{\partial p}{\partial I}q' - \frac{\partial q}{\partial I}p' \\
& = \frac{\pi \sqrt{\alpha}}{2{\bf K}(k_1)}  - \frac{\pi \beta}{2{\bf K}(k_1)\,\sqrt{\alpha}}
\left[ \sn^2(\cdot) + \frac{2{\bf E}(k_1)\varphi \sn(\cdot)\cn(\cdot)\dn(\cdot)}{\pi k_{1}'^2} \right. \\
& \qquad \qquad + \left. \frac{k_1^2\sn^2(\cdot)\cn^2(\cdot)}{1-k_1^2} -
\frac{{\bf E}(\cdot) \sn(\cdot)\cn(\cdot)\dn(\cdot)}{1 - k_1^2}\right] \cos(\tau - \tau_0) ,
\end{split}
\end{equation}
where we have used the properties of the Jacobi elliptic functions in
Appendix \ref{AppUsefulIntsAndExps}. As in Appendix \ref{Jacobian},
we are shortening $(\cdot) = \left(\frac{2{\bf K}(k_1)\varphi}{\pi},k_1\right)$.

We then wish to add the dissipative term $\gamma \theta'$.
This results in the following equations:
\begin{equation}
\begin{split}
I' & = \frac{8 \beta k_1^2 {\bf K}(k_1)}{\pi} \cos(\tau - \tau_0)\sn(\cdot)\cn(\cdot)\dn(\cdot) -
\frac{8 \gamma k_1^2 \sqrt{\alpha} \, {\bf K}(k_1)}{\pi}\cn^2(\cdot), \\
\varphi' & = \frac{\pi \sqrt{\alpha}}{2{\bf K}(k_1)} - \frac{\pi \beta}{2{\bf K}(k_1)\,\sqrt{\alpha}}
\left[ \sn^2(\cdot) + \frac{2{\bf E}(k_1)\varphi \sn(\cdot)\cn(\cdot)\dn(\cdot)}{\pi (1-k_{1}^2)}\right. \\
& \qquad \qquad \left. +
\frac{k_1^2\sn^2(\cdot)\cn^2(\cdot)}{1-k_1^2} - \frac{{\bf E}(\cdot)
\sn(\cdot)\cn(\cdot)\dn(\cdot)}{1 - k_1^2}\right]\cos(\tau - \tau_0) \\
& \qquad \qquad + \frac{\gamma \pi \cn(\cdot)}{2{\bf K}(k_1)}\left[ \frac{\sn(\cdot)}{\dn(\cdot)} \right.
\left. + \frac{2 {\bf E}(k_1)\varphi \cn(\cdot)}{\pi (1-k_{1}^2)} +
\frac{k_1^2\sn(\cdot)\cn^2(\cdot)}{(1-k_1^2)\dn(\cdot)} -
\frac{ {\bf E}(\cdot)\cn(\cdot)}{1 - k_1^2}\right].
\end{split}
\end{equation}
Using the property that, see \cite{DFLaw}, ${\bf E}(u,k) = {\bf E}(k)u/{\bf K}(k) + {\bf Z}(u,k)$
we arrive at equations \eqref{PerturbedAAvariablesOscTau}.
The function ${\bf Z}(u,k)$ is the Jacobi zeta function, which is
periodic with period $2{\bf K}(k)$ in $u$.

%%%%%%%%%%%%%%%%%%%%%%%%%%%%%%%%%%%%%%%%%%%%%%%%%%%%%%%%%%%%%%%%
\subsection{Rotations}
%%%%%%%%%%%%%%%%%%%%%%%%%%%%%%%%%%%%%%%%%%%%%%%%%%%%%%%%%%%%%%%%

The presence of the forcing term leads to te equations
\begin{equation}\begin{split}
I' & = - \frac{\partial p}{\partial \varphi}q' + \frac{\partial q}{\partial \varphi}p'
% \\ &
= \frac{4 \beta {\bf K}(k_2)}{\pi}\cos(\tau - \tau_0) \sn(\cdot)\cn(\cdot)\dn(\cdot), \\
\varphi' & = \frac{\partial p}{\partial I}\dot{q} - \frac{\partial q}{\partial I}\dot{p}
% \\ &
= \frac{\pi\sqrt{\alpha}}{k_2{\bf K}(k_2)} + \frac{\pi k_2 \beta}{\sqrt{\alpha}{\bf K}(k_2)}
\left[ \frac{ {\bf E}(k_{2}) \, \varphi \,\sn(\cdot)\,\cn(\cdot)\,\dn(\cdot)}{\pi(1-k_{2}^2)} \right. \\
& \qquad \qquad + \left.
\frac{k^2_2 \sn^2(\cdot)\cn^2(\cdot)}{1 - k_2^2} -
\frac{{\bf E}(\cdot)\sn(\cdot)\cn(\cdot)\dn(\cdot)}{1 - k_2^2} \right]\cos(\tau - \tau_0) .
\end{split}
\end{equation}
Again, if we wish to add a dissipative term, we arrive at the equations
\begin{equation}\begin{split}
I' & = \frac{4 \beta {\bf K}(k_2)}{\pi}\cos(\tau - \tau_0) \sn(\cdot)\cn(\cdot)\dn(\cdot) -
\frac{4\gamma \sqrt{\alpha} \, {\bf K}(k_2)}{\pi k_2}\dn^2(\cdot), \\
\varphi' & = \frac{\pi\sqrt{\alpha}}{k_2{\bf K}(k_2)} + \frac{\pi k_2 \beta}{\sqrt{\alpha}{\bf K}(k_2)}
\left[ \frac{ {\bf E}(k_{2}) \, \varphi \,\sn(\cdot)\,\cn(\cdot)\,\dn(\cdot)}{\pi(1-k_{2}^2)} \right. \\
& \qquad \qquad + \left. \frac{k^2_2 \sn^2(\cdot)\cn^2(\cdot)}{1 - k_2^2} - \frac{{\bf E}(\cdot)
\sn(\cdot)\cn(\cdot)\dn(\cdot)}{1 - k_2^2} \right]\cos(\tau - \tau_0) \\
& \qquad \qquad - \frac{\gamma \pi}{{\bf K}(k_2)}
\left[ \frac{ {\bf E}(k_{2}) \, \varphi \,\dn^{2} (\cdot)}{\pi(1-k_{2}^2)} +
\frac{k_2^2\sn(\cdot) \cn(\cdot)\dn(\cdot)}{1 - k_2^2} - \frac{ {\bf E}(\cdot)\dn^2(\cdot)}{1-k_2^2} \right] .
\end{split}
\end{equation}
Again using that ${\bf E}(u,k) = {\bf E}(k)u/{\bf K}(k) + {\bf Z}(u,k)$ we arrive at the equations
\eqref{PerturbedAAvariablesRotTau}.
%%%%%%%%%%%%%%%%%%%%%%%%%%%%%%%%%%%%%%%%%%%%%%%%%%%%%%%%%%%%%%%%
\section{Useful properties of the elliptic functions}
\label{AppUsefulIntsAndExps}
%%%%%%%%%%%%%%%%%%%%%%%%%%%%%%%%%%%%%%%%%%%%%%%%%%%%%%%%%%%%%%%%

The complete integrals of the first and second kind are, respectively,
\begin{equation} \label{CompInt}
{\bf K}(k) = \int_{0}^{\pi/2} \frac{\der \psi}{\sqrt{1-k^2 \sin^2 \psi}} , \qquad
{\bf E}(k) = \int_{0}^{\pi/2} \der \psi \, \sqrt{1-k^2 \sin^2 \psi} ,
\end{equation}
whereas the incomplete elliptic integral of the second kind is
\begin{equation} \label{Int}
{\bf E}(u,k) = \int_{0}^{\sn(u,k)} {\rm d}x \frac{\sqrt{1-k^2 x^{2}}}{\sqrt{1-x^2}}  .
\end{equation}
One has
\begin{equation} \label{EKDer}
\frac{\partial {\bf K}(k)}{\partial k} =
\frac{1}{k} \left( \frac{{\bf E}(k)}{1-k^2} - {\bf K}(k) \right), \qquad
\frac{\partial {\bf E}(k)}{\partial k} =
\frac{1}{k} \left( {\bf E}(k) - {\bf K}(k) \right) .
\end{equation}

The  following properties of the Jacobi elliptic functions have been used in the previous sections.
The derivatives with respect to the first arguments are
\begin{equation} \label{uDer}
\begin{split}
\frac{\partial}{\partial u} \sn(u,k) & = \cn(u,k) \, \dn (u,k) , \\
\frac{\partial}{\partial u} \cn(u,k) & = - \sn(u,k) \, \dn (u,k) , \\
\frac{\partial}{\partial u} \dn(u,k) & = - k^2 \sn(u,k) \, \cn (u,k) ,
\end{split}
\end{equation}
while the derivatives with respect to the elliptic modulus are
\begin{equation} \label{kDer}
\begin{split}
\frac{\partial}{\partial k} \sn(u,k) & = \frac{u}{k} \cn(u,k) \, \dn (u,k)
+ \frac{k}{k'^2} \, \sn(u,k) \, \cn^{2} (u,k) -
\frac{1}{k k'^2} {\bf E}(u,k)\,\cn(u,k) \, \dn(u,k) , \\
\frac{\partial}{\partial k} \cn(u,k) & = - \frac{u}{k} \sn(u,k) \, \dn (u,k)
- \frac{k}{k'^2} \, \sn^2(u,k) \, \cn (u,k) +
\frac{1}{k k'^2} {\bf E}(u,k)\,\sn(u,k) \, \dn(u,k) , \\
\frac{\partial}{\partial k} \dn(u,k) & = -ku \, \sn(u,k) \, \cn (u,k)
- \frac{k}{k'^2} \, \sn^{2}(u,k) \, \dn (u,k) +
\frac{k}{k'^2} {\bf E}(u,k)\,\sn(u,k) \, \cn(u,k) ,
\end{split}
\end{equation}
where $k'^2=1-k^2$.

Finding the value of $\Delta$ for rotations in Section \ref{ThresholdsSect} requires use of
\begin{equation}
\int_0^{x_1} \dn^2(x,k) \thinspace \der x = \int_0^{\sn(x_1,k)}
\frac{\sqrt{1 - k^2\hat{x}^2}}{\sqrt{1 - \hat{x}^2}} \thinspace \der \hat{x} = {\bf E}(x_1,k) .
\end{equation}
In the case of librations we also require the relation $k^2 \cn^2(\cdot) + (1 - k^2) = \dn^2(\cdot)$.

The integral for $\Gamma_1 (\tau_0;\mathpzc{p},\mathpzc{q})$
in equation \eqref{GammaEqnLib} is found by
\begin{equation}\begin{split}
\Gamma_1 (\tau_0;\mathpzc{p},\mathpzc{q}) = &
\frac{1}{T}\int_0^{T}\sn(\sqrt{\alpha}\tau)\cn(\sqrt{\alpha}\tau)
\dn(\sqrt{\alpha}\tau)\cos(\tau - \tau_0) \thinspace \der \tau \\
= & \frac{\cos(\tau_0)}{T}\int_0^{T}\sn(\sqrt{\alpha}\tau)
\cn(\sqrt{\alpha}\tau)\dn(\sqrt{\alpha}\tau)\cos(\tau) \thinspace \der \tau \\
&  + \frac{\sin(\tau_0)}{T}\int_0^{T}\sn(\sqrt{\alpha}\tau)
\cn(\sqrt{\alpha}\tau)\dn(\sqrt{\alpha}\tau)\sin(\tau) \thinspace \der \tau \\
= & \frac{\sin(\tau_0)}{T}\int_0^{T}\sn(\sqrt{\alpha}\tau)
\cn(\sqrt{\alpha}\tau)\dn(\sqrt{\alpha}\tau)\sin(\tau) \thinspace \der \tau ,\\
\end{split}
\end{equation}
where $T=2\pi \mathpzc{q}=4{\bf K}(k_{1})\mathpzc{p}$.

The Jacobi elliptic functions can be expanded in a Fourier series as
\begin{equation}\begin{split}
\sn(u,k) = \frac{2\pi}{k{\bf K}(k)}\sum_{n=1}^\infty \frac{\mathfrak{q}^{n - 1/2}}{1 -
\mathfrak{q}^{2n - 1}}\sin\left(\frac{(2n - 1)\pi u}{2{\bf K}(k)}\right),\\
\cn(u,k) = \frac{2\pi}{k{\bf K}(k)}\sum_{n=1}^\infty \frac{\mathfrak{q}^{n - 1/2}}{1 +
\mathfrak{q}^{2n - 1}}\cos\left(\frac{(2n - 1)\pi u}{2{\bf K}(k)}\right),\\
\dn(u,k) = \frac{\pi}{2{\bf K}(k)}+\frac{2\pi}{{\bf K}(k)}\sum_{n=1}^\infty
\frac{\mathfrak{q}^{n}}{1 - \mathfrak{q}^{2n}}\cos\left(\frac{2n\pi u}{2{\bf K}(k)}\right),\\
\end{split}\end{equation}
where $\mathfrak{q}$ is the nome, defined as
\[\mathfrak{q} = \exp \left( - \frac{ \pi{\bf K}(k')}{{\bf K}(k)}\right) ,\]
with $k'=\sqrt{1-k^2}$.

In the calculation of $\langle R^{(n)} \rangle$ for $n \ge 2$,
when the pendulum is in libration, we require the evaluation of the integrals
\begin{equation}
\frac{1}{T}\int_0^T \frac{2{\bf K}(k_1)}{\sqrt{\alpha}\pi}
\frac{\partial}{\partial \tau}\Bigl(\cn^2(\sqrt{\alpha}\tau)\Bigr)\thinspace \der \tau = 0,
\end{equation}
and
\begin{equation}\begin{split}
& \frac{1}{T}\int_0^T \frac{2{\bf K}(k_1)}{\sqrt{\alpha}\pi}\frac{\partial}{\partial \tau}
\Bigl(\sn(\sqrt{\alpha}\tau)\cn(\sqrt{\alpha}\tau)\dn(\sqrt{\alpha}\tau)
\Bigr)\cos(\tau - \tau_0)\thinspace \der \tau \\
& \qquad = \frac{1}{T}\int_0^T \frac{2{\bf K}(k_1)}{\sqrt{\alpha}\pi}\sn(\sqrt{\alpha}\tau)
\cn(\sqrt{\alpha}\tau)\dn(\sqrt{\alpha}\tau)\sin(\tau - \tau_0)\thinspace \der \tau \\
& \qquad = \frac{\cos(\tau_0)}{T}\int_0^T \frac{2{\bf K}(k_1)}{\sqrt{\alpha}\pi}
\sn(\sqrt{\alpha}\tau)\cn(\sqrt{\alpha}\tau)\dn(\sqrt{\alpha}\tau)\sin(\tau)\thinspace \der \tau \\
& \qquad \qquad - \frac{\sin(\tau_0)}{T}\int_0^T \frac{2{\bf K}(k_1)}{\sqrt{\alpha}\pi}
\sn(\sqrt{\alpha}\tau)\cn(\sqrt{\alpha}\tau)\dn(\sqrt{\alpha}\tau)\cos(\tau)\thinspace \der \tau ,
\end{split}
\end{equation}
where $T=2\pi \mathpzc{q}=4{\bf K}(k_{1})\mathpzc{p}$.
The integral multiplying $\sin(\tau_0)$ vanishes due to parity and hence
\begin{equation}
\frac{2{\bf K}(k_1)}{\sqrt{\alpha}\pi T}\int_0^T \frac{\partial}{\partial \tau}
\Bigl(\sn(\sqrt{\alpha}\tau)\cn(\sqrt{\alpha}\tau)\dn(\sqrt{\alpha}\tau)\Bigr)
\cos(\tau - \tau_0)\thinspace \der \tau = \frac{2{\bf K}(k_1)}{\sqrt{\alpha}\pi}
\cos(\tau_0)G_1(\mathpzc{p},\mathpzc{q}).
\end{equation}

%%%%%%%%%%%%%%%%%%%%%%%%%%%%%%%%%%%%%%%%%%%%%%%%%%%%%%%%%%%%%%%%

%%%%%%%%%%%%%%%%%%%%%%%%%%%%%%%%%%%%%%%%%%%%%%%%%%%%%%%%%%%%%%%%

\end{document}